\date{November 4, 2018}
\title[]{%
  Hypersurfaces with light-like points \\
in a Lorentzian manifold
}
\author[]{
        M.~Umehara and   
        K.~Yamada 
}   
\address[Umehara]{%
   Department of Mathematical and Computing Sciences,
   Tokyo Institute of Technology,
   Tokyo 152-8552, Japan
}
\email{umehara@is.titech.ac.jp}
\address[Yamada]{%
   Department of Mathematics,
   Tokyo Institute of Technology,
   Tokyo 152-8551, Japan
}
\email{kotaro@math.titech.ac.jp}
\subjclass[2010]{%
 Primary 53A10;   
 Secondary 53B30, 
         35M10.   
}%
\keywords{%
    maximal surface, 
    mean curvature, 
    type change, 
    zero mean curvature, Lorentz-Minkowski space}%
\thanks{
Umehara was partially supported by 
the Grant-in-Aid for Scientific Research  (A) No.\ 26247005,  
and Yamada by (B) No.\  17H02839 from Japan Society for the 
Promotion of Science.
}
\theoremstyle{plain}
 \newtheorem{introtheorem}{Theorem}
 \newtheorem{introcorollary}[introtheorem]{Corollary}
 \newtheorem{theorem}{Theorem}[section]
 \newtheorem{proposition}[theorem]{Proposition}
 \newtheorem{lemma}[theorem]{Lemma}
 \newtheorem{corollary}[theorem]{Corollary}
\theoremstyle{definition}
 \newtheorem{definition}[theorem]{Definition}
\theoremstyle{remark}
 \newtheorem{remark}[theorem]{Remark}
 \newtheorem*{remark*}{Remark}
 \newtheorem{example}[theorem]{Example}
 \newtheorem*{acknowledgement}{Acknowledgement}
\numberwithin{equation}{section}
\newcommand{\op}[1]{{\operatorname{#1}}}
\newcommand{\mb}[1]{\vect{#1}}
\newcommand{\mc}[1]{\mathcal{#1}}
\newcommand{\vect}[1]{\boldsymbol{#1}}
\newcommand{\R}{\boldsymbol{R}}
\newcommand{\XX}{\mathcal{X}}
\newcommand{\Y}{\mathcal{Y}}
\newcommand{\Z}{\mathcal{Z}}
\renewcommand{\phi}{\varphi}
\newcommand{\pmt}[1]{{\begin{pmatrix} #1  \end{pmatrix}}}
\begin{document}
\maketitle

\begin{abstract}
Consider a constant mean curvature immersion $F:U(\subset \R^n)\to M$
into an arbitrary Lorentzian $(n+1)$-manifold $M$.
A point $o\in U$ is called
a {\it light-like point} if 
the first fundamental form $ds^2$ of $F$
degenerates at $o$.
We denote by
$B_F$ the determinant function of 
the symmetric matrix associated to $ds^2$
with respect to a local coordinate system at $o$.
A light-like point $o$ is said to be {\it degenerate}
if the exterior derivative of $B_F$ vanishes at $o$.
We show that if $o$ is a degenerate light-like point, 
then the image of $F$ contains a 
light-like geodesic segment of $M$
passing through $f(o)$ (cf. Theorem E).
This explains why several known examples of
constant mean curvature hypersurface 
in the Lorentz-Minkowski $(n+1)$-space form
$\R^{n+1}_1$ contain light-like lines
on their sets of light-like points, under a
suitable regularity condition of $F$.
Several related results are also given.  
\end{abstract}

\section*{Introduction} \label{sec:1} 
Let $M$ be a $C^r$-differentiable
oriented Lorentzian $(n+1)$-manifold
($r\ge 3$, $n\ge 2$ and
$r=\omega$ means real analyticity.)

Let $U$ be a domain of $\R^n$, and take an arbitrarily 
fixed point $o\in U$. 
We consider a $C^r$-immersion 
$F:(U,o)\to M$ ($r\ge 3$),
where
$(U;u_1,\dots,u_n)$ is a local coordinate neighborhood 
centered at $o$. 
Then the first fundamental form of $F$ can be
written as
\begin{equation}
 ds^2=\sum_{i,j=1}^n s_{i,j}^{}du_i\,du_j.
\end{equation}
We set 
\begin{equation}\label{eq:def01}
 B_F
  :=\det\pmt{
    s_{1,1}^{} &\cdots & s_{1,n}^{} \\
    \vdots &\ddots & \vdots \\
    s_{n,1}^{} &\cdots & s_{n,n}^{} 
  },
\end{equation}
and denote by $U_+$ (resp.\ $U_-$)
the space-like (resp.\ time-like) part 
of $F$, that is,
\begin{equation}\label{eq:def02}
  U_+:=\{p\in U\,;\, B_F(p)>0\},\qquad
  U_-:=\{p\in U\,;\, B_F(p)<0\}.
\end{equation}
The area element of $F$ is an $n$-form on $U$ given by
\begin{equation}\label{eq:def03}
\theta_F:=\sqrt{|B_F|}\,du_1\wedge \cdots \wedge du_{n},
\end{equation}
which does not depend on the choice of
positively oriented 
local coordinate system $(u_1,\dots,u_n)$ at $o$.
Each point on 
\begin{equation}\label{eq:Sigma}
  \Sigma_F:=U\setminus (U_+\cup U_-)
\end{equation}
is called a {\it light-like point}.
A light-like point $p\in \Sigma_F$ is called {\it degenerate}
(resp.\ {\it non-degenerate}) 
if the exterior derivative of $B_F$ vanishes
(resp. does not vanish) at $p$.
We are interested in immersions 
$F:(U,o)\to M$ whose base point $o(\in U)$ is  
light-like. 
We denote by $\vect{H}_F$ the {\it mean curvature 
vector field} of $F$ defined on $U_+\cup U_-$
(cf.\ \eqref{eq:HF}).
Although $\vect{H}_F$ diverges at $\Sigma_F$
in general, we can consider a
class of hypersurfaces
$\Y^r(M,\hat o)$ ($r\ge 4$, $\hat o:=F(o)$)
consisting of $C^r$-immersions $F$ at $o$ such that $F(o)=\hat o$,
and  $\vect{H}_F$ can be extended on a neighborhood of $o$ 
with $C^{r-2}$-differentiability
($C^\omega$-differentiability means real analyticity, and
in this case, we mean $r-2=\omega$).

On the other hand,
we can define a $C^{r-2}$-function
$
A_F:U\to \R
$
such that 
$$
H_F:=\frac{A_F}{n|B_F|^{3/2}}
$$ 
gives the mean curvature function of $F$ on $U\setminus \Sigma_F$
(see \eqref{eq:AF} for details).
An immersion $F:(U,o)\to M$ 
is called of {\it constant mean curvature} $c$
if $A_F-nc |B_F|^{3/2}$ vanishes identically.
In particular, if $c=0$, then the condition reduces to $A_F=0$,
and such hypersurfaces are called of {\it zero mean curvature}.
We denote by
$\Z^r(M,\hat o)$ the set of germs of 
zero mean curvature $C^r$-immersions $F$ at $o$ such that $F(o)=\hat o$.
Then 
$$
\Z^r(M,\hat o)\subset \Y^r(M,\hat o)
$$
holds by definition. 
Moreover, for each non-negative real number 
$\alpha$, we newly consider a wider class
$\XX^{r}_{\alpha}(M,\hat o)$ 
such that $F\in \XX^{r}_{\alpha}(M,\hat o)$ means
there exists a $C^{r-2}$-function $\phi$
such that $A_F-(B_F)^{1+\alpha}\phi$ vanishes identically
(if $\alpha\not \in \mb Z$, we mean 
$(B_F)^{1+\alpha}:=|B_F|^{1+\alpha}$).
Then it holds that (see Propositions 
\ref{prop:000} and \ref{prop:3} in Section 2):
\begin{itemize}
\item[(C1)] $\Y^r(M,\hat o)=\XX^{r}_{1}(M,\hat o)$. 
\item[(C2)] 
$F\in \XX^{r}_{0}(M,\hat o)$ if and only if
the mean curvature form $\op{sgn}(B_F)H_F\theta_F$
(cf. \eqref{eq:area}) can be  
extended as a $C^{r-2}$-form on $U$.
\item[(C3)] 
If $F\in \XX^{r}_{1/2}(M,\hat o)$, 
then $H_F$ can be extended as  a $C^{r-2}$-function 
near $o$. Constant mean curvature hypersurfaces 
belong to this class.
\item[(C4)] 
For each non-negative integer 
$\alpha$,
$\XX^{r}_{\alpha+1}(M,\hat o)$ is a subset of
$\XX^{r}_{\alpha}(M,\hat o)$.
\end{itemize}

We can show that
$\Y^r(M,\hat o)$ is non-empty (cf. Example \ref{ex:X2} and
Corollary \ref{cor:ZYX}),
and is a proper subset of
$\XX^{r}_{0}(M,\hat o)$
(cf. Theorem~\ref{thm:EA}).
We then prove the following:

\begin{introtheorem}\label{thm:main}
 Suppose that $F:U\to M$ 
is an immersion satisfying
$F\in \XX^{4}_{0}(M,\hat o)$.
If $o$ is a degenerate light-like point,
then $F(\Sigma_F)$ contains a light-like geodesic 
segment in $M$ passing through $\hat o(=F(o))$
consisting of only degenerate light-like points.
\end{introtheorem}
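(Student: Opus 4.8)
The plan is to locate the null direction at $o$, to convert the hypotheses $B_F(o)=0$, $dB_F(o)=0$ and $F\in\XX^4_0(M,\hat o)$ into a single infinitesimal identity valid along the whole light-like set, and then to propagate that identity along the null geodesic issuing from $\hat o$. First I would choose a local coordinate system $(u_1,\dots,u_n)$ centered at $o$ adapted to the degeneracy. Since a hyperplane in a Lorentzian vector space is space-like, time-like, or null with null-space of dimension exactly one, the kernel of $ds^2$ at a light-like point is one-dimensional; I arrange that $\xi:=F_{u_1}(o)=dF(\partial_{u_1}|_o)$ spans this kernel and that $(s_{i,j}(o))=\op{diag}(0,1,\dots,1)$. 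Writing $\tilde\nu:=F_{u_1}\times\cdots\times F_{u_n}$ for the unnormalized conormal, one has $\inner{\tilde\nu}{\tilde\nu}=\pm B_F$, so at $o$ the conormal is null and, the tangent hyperplane $\Pi:=dF(T_oU)$ being null, $\tilde\nu(o)$ is a non-zero multiple of $\xi$; equivalently $\Pi=\xi^{\perp}$. In these coordinates the cofactor matrix of $(s_{i,j})$ satisfies $\op{cof}_{i,j}(o)=\delta_{i1}\delta_{j1}$, so that $dB_F(o)=0$ is equivalent to $\partial_{u_k}s_{1,1}(o)=0$ for all $k$.

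Next I would extract the basic pointwise identity. From the defining expression \eqref{eq:AF}, whose leading structure is $A_F=\op{sgn}(B_F)\sum_{i,j}\op{cof}_{i,j}\inner{\nabla_{\partial_{u_i}}F_{u_j}}{\tilde\nu}$, and since at $o$ only the $(1,1)$-cofactor survives, the hypothesis $F\in\XX^4_0(M,\hat o)$, i.e.\ $A_F=B_F\phi$, forces $A_F(o)=0$ and hence (using $\tilde\nu(o)\parallel\xi$) the scalar relation $\inner{\nabla_{\partial_{u_1}}F_{u_1}}{\xi}=0$, that is $\inner{\nabla_\xi\xi}{\xi}=0$. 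This one identity has the two readings I need. Because $\xi^{\perp}=\Pi=T_oN$ (where $N:=F(U)$), it says $\nabla_\xi\xi\in T_oN$, so the $M$-geodesic with initial velocity $\xi$ is tangent to $N$ to second order; and because $\inner{\nabla_\xi\xi}{\xi}=\tfrac12\partial_{u_1}s_{1,1}(o)=\tfrac12\partial_{u_1}B_F(o)$, it says the null direction is tangent to $\{B_F=0\}$. Crucially, this computation is valid verbatim at every light-like point near $o$, so both readings hold along all of $\Sigma_F$, with no use yet of the degeneracy of $o$.

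Finally I would construct the conclusion. Let $\sigma(t)$ be the $M$-geodesic with $\sigma(0)=\hat o$ and $\dot\sigma(0)=\xi$; it is null since $\inner{\xi}{\xi}=0$. Writing $\rho$ for a local defining function of the hypersurface $N$, the second-order tangency above gives $\rho(\sigma(0))=(\rho\circ\sigma)'(0)=(\rho\circ\sigma)''(0)=0$. The degeneracy of $o$ now enters: differentiating $A_F=B_F\phi$ and using $dB_F(o)=0$ yields $dA_F(o)=\phi(o)\,dB_F(o)=0$, which supplies the extra first-order vanishing of the $\tilde\nu$-components of $\nabla_{\partial_{u_i}}F_{u_j}$ that is needed both to promote the asymptotic direction to a genuine geodesic direction of $M$ and to keep $\sigma$ inside $\{B_F=0\}$. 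I then run a continuity (open–closed) argument along $\sigma$: on the set where $\sigma$ lies in $N$, meets $\Sigma_F$, and has $\dot\sigma$ equal to the null direction there, the two readings of the identity reproduce $(\rho\circ\sigma)''=0$ and $(B_F\circ\gamma)'=0$ for $F\circ\gamma=\sigma$, so $\rho\circ\sigma$ and $B_F\circ\gamma$ satisfy linear homogeneous differential relations with vanishing Cauchy data and stay zero; preservation of degeneracy follows since $dB_F$ and $dA_F$ vanish identically along $\gamma$. This exhibits $\sigma$ as a light-like geodesic segment through $\hat o$ lying in $F(\Sigma_F)$ and consisting of degenerate light-like points.

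The main obstacle is precisely this last propagation step in finite regularity. Because $F$ is only $C^4$ (so that $A_F,\phi$ are merely $C^2$), I cannot expand $\rho\circ\sigma$ and $B_F\circ\gamma$ to infinite order and invoke analyticity; instead I must close a self-improving ODE/continuity estimate from the finitely many derivatives that $dA_F(o)=0$ and $dB_F(o)=0$ control, while simultaneously verifying that the null direction of $\Sigma_F$ does not rotate away from $\dot\sigma$ and that degeneracy is preserved, so that the bootstrap does not stall at a boundary point of the segment.
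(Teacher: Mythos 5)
Your opening two paragraphs are essentially correct, and they recover identities that the paper also uses: at any light-like point the cofactor matrix of $(s_{i,j})$ is rank one in the null direction, so $A_F=B_F\phi$ forces $\inner{D_\xi\xi}{\xi}=0$ there, and at a degenerate point $dA_F(o)=\phi(o)\,dB_F(o)+B_F(o)\,d\phi(o)=0$ (compare Lemma~\ref{lem:nablaB} and \eqref{eq:AB1}). The genuine gap is the propagation step, and it is not a technical estimate you have merely postponed --- it is the entire content of the theorem, and the open--closed scheme you describe cannot be closed. The relations you invoke, $(\rho\circ\sigma)''=0$ and $(B_F\circ\gamma)'=0$, are only established \emph{at} points of the set you are trying to propagate (points where $\sigma$ already lies in $F(\Sigma_F)$ with matching null direction); they give no differential relation valid in a neighborhood of that set, hence no Lipschitz/Gronwall mechanism forcing $\rho\circ\sigma$ and $B_F\circ\gamma$ to remain zero. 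Second-order tangency at each point of a closed set does not prevent $\sigma$ from leaving the hypersurface at a boundary point of that set. Worse, the argument is circular: the curve $\gamma$ with $F\circ\gamma=\sigma$ and the ``null direction along $\sigma$'' are only defined where $\sigma$ is already known to lie in $F(\Sigma_F)$, so verifying openness of your set presupposes the conclusion. Your final paragraph concedes precisely this point, which is the theorem itself rather than a finite-regularity nuisance.

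The paper closes this gap with a different device: it never follows the ambient geodesic through the surface. It takes Fermi coordinates along the candidate geodesic $\sigma(t)=(t,0,\dots,0,t)$ (Appendix A), writes $F$ as a graph with
$f=a(x_n)+\sum_{i=1}^{n-1} b_i(x_n)x_i+\sum_{j\le k}c_{j,k}(x_1,\dots,x_n)x_jx_k$, and restricts the two equations $\tilde A_F=0$ and $(\tilde A_F)_{x_i}=0$ $(i=1,\dots,n-1)$ to the $x_n$-axis. Since $A_F-B_F\phi=0$ can be solved for $f_{x_n,x_n}$ by the implicit function theorem (as in Theorem~\ref{thm:EA}), these restrictions form a second-order ODE system in normal form for the unknowns $(a,b_1,\dots,b_{n-1})$, with the $c_{j,k}$ treated as known coefficients; $C^4$-differentiability makes the system locally Lipschitz (Proposition~\ref{prop:ODE-con}). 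The key comparison (Proposition~\ref{prop:B}) is that the ``straightened'' hypersurface $F_0$, built from the \emph{same} $c_{j,k}$ but with $a_0(x_n)=x_n$, $b_{0,i}=0$, satisfies $A_{F_0}=B_{F_0}=(A_{F_0})_{x_i}=(B_{F_0})_{x_i}=0$ along the axis, hence solves the same ODE system; light-likeness and degeneracy of $o$ say exactly that $(a,b_i)$ and $(a_0,b_{0,i})$ share Cauchy data. Uniqueness then gives $a(x_n)=x_n$, $b_i=0$, which simultaneously puts $\sigma$ inside $F(\Sigma_F)$, makes it a geodesic, and makes every point on it degenerate --- the three things your bootstrap could not decouple. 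If you wish to rescue your plan, the missing idea is to convert your pointwise identities into an ODE for data intrinsic to the surface (the coefficient functions $a,b_i$), rather than for quantities defined along the ambient geodesic.
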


The proof of this result is given in Section 3.
There are infinitely many examples
belonging to $\XX^{4}_{0}(M,\hat o)\setminus 
\Y^{4}_{0}(M,\hat o)$
 (cf. Corollary \ref{cor:ZYX}).
So the theorem generalizes 
the authors' previous result \cite[Theorem 4.2]{UY}
even in the case of $M=\R^3_1$
(see also Remark \ref{new}).

A hypersurface $F$ is said to be {\it light-like}
if $B_F$ vanishes identically
(cf. Definition \ref{def:B0}).
Existence of infinitely many light-like hypersurfaces in a given
Lorentzian manifold is shown in Section 1
(cf. Theorem \ref{thm:EB}).
If $B_F$ vanishes identically, then so does $A_F$ (cf. Proposition 1.4).
In particular, light-like  hypersurfaces belong to
the class $\Z^{4}(M,\hat o)(\subset \XX^{4}(M,\hat o))$.
So we get the following corollary. 
(See  the end of Section 3. 
An alternative proof under the assumption of
$C^2$-differentiability
is given in Section 4.)	
\begin{introcorollary}
Suppose that
$F:U\to M$ is a $C^2$-differentiable light-like hypersurface.
Then $F(U)$ is foliated by light-like geodesics.
\end{introcorollary}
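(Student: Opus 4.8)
The plan is to deduce the statement from Theorem~\ref{thm:main} by organizing the light-like geodesics it produces along the unique null direction of $ds^2$. Since $F$ is light-like, $B_F$ vanishes identically on $U$; hence $dB_F\equiv 0$ and $U_+=U_-=\emptyset$, so that $\Sigma_F=U$ and every point of $U$ is a \emph{degenerate} light-like point. Moreover $A_F$ vanishes identically (Proposition~1.4), so that $A_F=B_F\cdot\phi$ holds with $\phi\equiv 0$; thus $F\in\Z^4(M,F(p))\subset\XX^4_0(M,F(p))$ at every $p\in U$. Theorem~\ref{thm:main} therefore applies at each point and yields, through every $F(p)$, a light-like geodesic segment of $M$ contained in $F(U)$.

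To assemble these segments into a foliation I would first pin down the null direction. At each $p\in U$ the immersed tangent hyperplane $dF(T_pU)\subset T_{F(p)}M$ is degenerate, and its radical $dF(T_pU)\cap dF(T_pU)^\perp$ is totally null; since a totally null subspace of a Lorentzian vector space has dimension at most one, while $B_F\equiv 0$ forces the radical to be nonzero, the radical is exactly one-dimensional. Pulling back by the immersion $F$, the kernel of the symmetric matrix $(s_{i,j})$ is one-dimensional at every point, and locally a spanning vector field is furnished by a nonvanishing column of the adjugate of $(s_{i,j})$. As these entries are polynomials in the functions $s_{i,j}$ of class $C^{r-1}$, we obtain a nowhere-vanishing line field $\mc{L}$ on $U$; its integral curves (unique by the existence--uniqueness theorem for ordinary differential equations, the field being at least $C^{1}$ once $r\ge 2$) constitute a one-dimensional foliation of $U$.

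It remains to match the leaves of $\mc{L}$ with the geodesics of Theorem~\ref{thm:main}. The velocity of the geodesic through $F(p)$ is a null vector lying in the null hyperplane $dF(T_pU)$, hence proportional to the unique null direction $dF(v_p)$; the same holds at each of its points, so its $F$-preimage is a curve everywhere tangent to $\mc{L}$ and therefore, by uniqueness of integral curves, coincides with the leaf through $p$. Consequently $F$ maps each leaf of $\mc{L}$ onto the light-like geodesic through its image, and the foliation of $U$ by integral curves of $\mc{L}$ is carried by $F$ to a foliation of $F(U)$ by light-like geodesics.

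The principal obstacle is one of regularity rather than geometry: Theorem~\ref{thm:main} presupposes $F\in\XX^4_0(M,\hat o)$, so the argument above secures the corollary only for light-like hypersurfaces of class $C^4$ (or $C^\omega$). Reaching the stated $C^2$ generality cannot go through Theorem~\ref{thm:main}; instead one retains the null line field $\mc{L}$ and verifies by a direct computation in local coordinates that the identity $B_F\equiv 0$ already forces each integral curve of $\mc{L}$ to obey the geodesic equations of $M$. This lower-regularity route is the alternative proof carried out in Section~4.
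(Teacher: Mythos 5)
Your two-part argument is essentially the paper's own treatment: the route through Theorem~\ref{thm:main} (every point of $U$ is a degenerate light-like point, $A_F\equiv 0$ by Proposition~\ref{prop:B0}, and the unique null direction of the rank-$(n-1)$ degenerate induced metric organizes the geodesic segments into a foliation) is exactly the proof the paper gives at the end of Section~3, which likewise only covers the $C^4$ case. The ``direct computation in local coordinates'' you defer to for the stated $C^2$ generality is precisely Proposition~\ref{prop:weak-version} --- whose hypotheses $B_F=0$ and $\nabla B_F=\mb 0$ along a null curve hold trivially when $B_F\equiv 0$ --- and applying it to the integral curves of your null line field $\mc{L}$ is exactly the paper's alternative proof in Section~4, so citing that proposition closes your remaining gap.
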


Let $I$ be an interval of $\R$.
A regular curve $\sigma:I\to M$ 
is called {\it null}
if 
\begin{equation}
\label{eq:null}
g(\sigma'(t),\sigma'(t))=0
\end{equation}
holds at each $t\in I$, where
$g$ is the Lorentzian metric of $M$
and $\sigma'(t):=d\sigma/dt$.
As a consequence of Theorem A, 
we can prove the following in the case $n=2$:

\begin{introcorollary}\label{cor:main}
 Suppose that $n=2$ $($i.e.\ $M$ is a Lorentzian $3$-manifold$)$
and $F:U\to M$ satisfies $F\in \XX^{4}_{0}(M,\hat o)$.
Then one of the following  two cases occurs:
 \begin{enumerate}
\item[{\rm (a)}] 
	       The point $o$ is a non-degenerate light-like point,
	       and there exists a null regular curve
$\sigma(t)$ in $M$ parametrizing
$F(\Sigma_F)$ such that the acceleration vector
$\sigma''(t)$ $($see \eqref{eq:acc} for the definition$)$
is linearly independent of the velocity vector $\sigma'(t)$.
Moreover, $F$ changes 
its causal type from space-like to time-like
across the curve $\sigma$.
\item[{\rm(b)}] 
The point $o$ is a degenerate light-like point,
and $F(\Sigma_F)$ contains 
a light-like geodesic segment
	       in $M$ passing through $\hat o(=F(o))$.
 \end{enumerate}
\end{introcorollary}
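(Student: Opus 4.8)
The plan is to prove the dichotomy by separating according to whether the light-like base point $o$ is degenerate. If $o$ is \emph{degenerate}, then $dB_F$ vanishes at $o$ and Theorem~\ref{thm:main} applies verbatim (with $r=4$), giving a light-like geodesic segment of $M$ through $\hat o$ inside $F(\Sigma_F)$; this is conclusion~(b). Hence the entire content lies in the \emph{non-degenerate} case $dB_F(o)\neq 0$, where I must produce the null curve $\sigma$, show $\sigma''$ is independent of $\sigma'$, and read off the type change.

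Assume $dB_F(o)\neq 0$. Since $n=2$, the real-valued function $B_F$ has non-vanishing differential at $o$, so by the implicit function theorem $\Sigma_F=\{B_F=0\}$ is a regular curve near $o$; fix a regular parametrization $\gamma(t)$ with $\gamma(0)=o$ and set $\sigma:=F\circ\gamma$, which is regular in $M$ because $F$ is an immersion. I choose a positively oriented coordinate system $(u_1,u_2)$ centered at $o$ so that $\partial_{u_1}|_o$ spans the one-dimensional kernel of the degenerate form $ds^2|_o$; writing $F_{u_i}:=dF(\partial_{u_i})$ and $F_{u_iu_j}:=\nabla_{\partial_{u_i}}F_{u_j}$ (with $\nabla$ the Levi-Civita connection of the metric), this gives $s_{1,1}(o)=s_{1,2}(o)=0$ and $s_{2,2}(o)\neq 0$. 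Let $\nu:=F_{u_1}\times F_{u_2}$ be the Lorentzian vector product, a nowhere-zero normal field with $\inner{\nu}{\nu}=-B_F$, so $\nu$ is null exactly along $\Sigma_F$. The first structural fact is purely linear-algebraic: at $o$ both $F_{u_1}$ and $\nu$ are null and mutually orthogonal, and in a Lorentzian $3$-space two orthogonal null vectors are proportional, so $\nu(o)=c\,F_{u_1}(o)$ for some $c\neq 0$.

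Next I extract the null property of $\sigma$ from the mean curvature hypothesis. Writing $L:=\inner{F_{u_1u_1}}{\nu}$, $M:=\inner{F_{u_1u_2}}{\nu}$, $N:=\inner{F_{u_2u_2}}{\nu}$, the definition \eqref{eq:AF} shows that $A_F=L\,s_{2,2}-2M\,s_{1,2}+N\,s_{1,1}$ up to sign. Since $F\in\XX^{4}_{0}(M,\hat o)$ means $A_F=B_F\,\phi$, we have $A_F(o)=0$, and evaluating the expression at $o$ (where $s_{1,1}=s_{1,2}=0$) forces $L(o)\,s_{2,2}(o)=0$, i.e.\ $L(o)=0$. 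By $\nu(o)=c\,F_{u_1}(o)$ this is equivalent to $\inner{F_{u_1u_1}}{F_{u_1}}(o)=\tfrac12\partial_{u_1}s_{1,1}(o)=0$. Combined with $dB_F|_o=s_{2,2}(o)\,ds_{1,1}|_o$, non-degeneracy then forces $\partial_{u_2}s_{1,1}(o)\neq 0$, so $\ker dB_F|_o=\op{span}(\partial_{u_1})$ is exactly the null direction. Thus $\gamma'(0)\parallel\partial_{u_1}$ and $\inner{\sigma'}{\sigma'}(0)=ds^2(\gamma',\gamma')(0)=s_{1,1}(o)=0$. The same computation holds at every point of $\Sigma_F$ near $o$ (all non-degenerate by continuity), so $\sigma$ is null and, moreover, $\sigma'$ is everywhere proportional to $\nu$ along $\Sigma_F$.

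Finally I show $\sigma''$ is independent of $\sigma'$, which is the crux. Write $\sigma'=\lambda\,\nu$ along $\Sigma_F$ with $\lambda\neq 0$, so $\sigma''=\dot\lambda\,\nu+\lambda\,\nabla_{\sigma'}\nu$ and $\sigma''\parallel\sigma'$ iff $\nabla_{\sigma'}\nu\parallel\nu$. Because $\sigma$ is null, $\inner{\sigma''}{\sigma'}=0$, which kills the transversal-null component of $\sigma''$; hence, since $\sigma'\parallel F_{u_1}$, one has $\sigma''\parallel\sigma'$ iff $\inner{\sigma''}{F_{u_2}}=0$, i.e.\ iff $\inner{\nabla_{\sigma'}\nu}{F_{u_2}}=0$. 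Differentiating $\inner{\nu}{F_{u_2}}\equiv 0$ along $\gamma$ and evaluating at $o$ (with $\gamma'(0)=\partial_{u_1}$) gives $\inner{\nabla_{\sigma'}\nu}{F_{u_2}}(o)=-\inner{\nu}{F_{u_1u_2}}(o)=-M(o)$. But $M(o)=c\,\inner{F_{u_1u_2}}{F_{u_1}}(o)=\tfrac{c}{2}\partial_{u_2}s_{1,1}(o)$, which is nonzero precisely because $dB_F(o)\neq 0$. Therefore $\inner{\sigma''}{F_{u_2}}(o)\neq 0$, so $\sigma''(0)$ is linearly independent of $\sigma'(0)\parallel F_{u_1}(o)$, establishing conclusion~(a). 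The type change is immediate: as $dB_F(o)\neq 0$, $B_F$ is strictly positive on one side of $\Sigma_F$ and strictly negative on the other, so by \eqref{eq:def02} the space-like part $U_+$ and the time-like part $U_-$ lie on opposite sides of $\sigma$, i.e.\ $F$ changes causal type from space-like to time-like across $\sigma$. The main obstacle is exactly the independence of $\sigma''$ and $\sigma'$; the clean route above hinges on the two structural facts that $\nu\parallel F_{u_1}$ at light-like points and that $\sigma'\parallel\nu$ along $\Sigma_F$, which together reduce the whole question to the scalar equivalence $M(o)\neq0\Leftrightarrow dB_F(o)\neq 0$.
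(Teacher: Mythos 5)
Your proof is correct, and its top-level decomposition coincides with the paper's: the degenerate case is exactly Theorem~\ref{thm:main} with $n=2$, so all the content lies in the non-degenerate case (a) — but for (a) you take a genuinely different route. The paper derives (a) from Proposition~\ref{prop:n=2}, which rests on two auxiliary results: Lemma~\ref{lem:nablaB} (in graph coordinates, $A_F=0$ along $\Sigma_F$ forces $(B_F)_{x_n}=0$, so the tangent of $\Sigma_F$ is the null direction) and the general-$n$ Proposition~\ref{prop:weak-version}, whose proof builds tubular coordinates $(y_1,\dots,y_n)$ along the whole curve and shows $g(\hat\gamma'',dF(\partial_{y_i}))=-\tfrac12 B_{y_i}$, so that $\nabla B_F\ne\vect{0}$ produces a nonzero tangential component of the acceleration. (Strictly speaking, the paper invokes Proposition~\ref{prop:weak-version} in its converse form, which is justified by that proposition's proof rather than by its statement; your argument sidesteps this by proving the nonvanishing directly.) You instead work pointwise at $o$ with the cross-product normal $\nu$ satisfying $\inner{\nu}{\nu}=-B_F$: orthogonal null vectors in a Lorentzian $3$-space are proportional, so $\nu(o)\parallel F_{u_1}(o)$; the cofactor expansion $A_F=\pm(L\,s_{2,2}-2M\,s_{1,2}+N\,s_{1,1})$ together with $A_F|_{\Sigma_F}=0$ gives $L(o)=0$, hence nullity of $\Sigma_F$; and $\inner{\sigma''}{F_{u_2}}(o)=-\lambda M(o)=-(\lambda c/2)\,\partial_{u_2}s_{1,1}(o)$, which is nonzero exactly when $dB_F(o)\ne 0$. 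This is the same underlying computation as the paper's — in both proofs the transverse derivative of $B_F$ is, up to nonzero factors, the component of $\sigma''$ along the spacelike tangent direction — but your implementation is more elementary and self-contained for $n=2$, and it also establishes the causal-type-change clause explicitly, which the paper records only in a one-line remark in Section~3. What the paper's heavier machinery buys is dimension-independence: Proposition~\ref{prop:weak-version} holds for all $n$ and is reused for the alternative proof of Corollary B. Two small points you should make explicit: (i) the defining condition $A_F=\phi B_F$ of $\XX^{4}_{0}$ is stated in the paper's normalized coordinates, while you use it in your adapted coordinates $(u_1,u_2)$; this is harmless because $A_F$ and $B_F$ rescale by $(\det J)^3$ and $(\det J)^2$ under a source coordinate change, so the vanishing of $A_F$ along $\Sigma_F$ is coordinate-independent; (ii) the differentiability of the factor $\lambda$ in $\sigma'=\lambda\,\nu$, which you differentiate at $t=0$, should be noted — it follows from $r=4$ and the $C^3$-regularity of $\Sigma_F$.
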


The proof of this result is given in Section 4.
Fundamental properties of space-like zero mean curvature
surfaces are discussed in \cite{ER,K,UY1}.
The article \cite{Okayama} discusses about
zero-mean curvature surfaces in $\R^{3}_1$ 
satisfying (a). Several properly embedded zero mean 
curvature surfaces of type (a) in $\R^3_1$ were 
constructed in \cite{A,maxfold,OJM}, and
examples of zero-mean curvature 
surfaces satisfying
(b) has been also constructed in \cite{A,A3,OJM,CR,HK,UY}.
(In \cite{UY}, the general existence theorem
of surface germs which changes their causal types
along light-like lines was given.
Recently, Hashimoto and Kato \cite{HK}
gave concrete examples which change their causal types
along degenerate light-like lines.)

Let $\R^{n+1}_1$ be the $(n+1)$-dimensional
Lorentz-Minkowski space.

Even when $M=\R^3_1$ and  $F$ is a zero-mean curvature
immersion, Corollary C is highly non-trivial 
(cf. see Klyachin \cite{Kl}, where the line theroem 
for  ZMC-surface in
$\R^3_1$ was proved under the $C^3$-differentiability).
We set $\mathcal Y^\omega(\R^{3}_1)
:=\mathcal Y^\omega(\R^{3}_1,\hat o)$.
A generalization of the result of
Klyachin \cite{Kl} for $F\in \Y^r(\R^{3}_1)$ $(r\ge 4)$
was given in the authors' previous work \cite{UY},
using a new approach.
In this paper, that approach is further developed to get
the above results.
As a consequence, Corollary C holds   
not only for $F$ in $\Y^{r}(M,\hat o)$,
but also for the classes $\XX^{4}_{0}(M,\hat o)$.

For surfaces $F$ that do not belong to 
$\XX^{r}_{0}(M,\hat o)$, 
the proof of Theorem A does not work. 
Real analytic hypersurfaces
with bounded mean curvature function
may not belong to $\XX^{\omega}_{0}(M,\hat o)$,
in general. However, we prove (in Section 5)
the following:

\medskip
\noindent
{\bf Theorem D.}
{\it Let $F:U\to M$
be a real analytic immersion 
with a light-like point $o\in U$
whose mean 
curvature function $H_F$ can be extended to the whole of $U$
real analytically.
If $\log |H_F|$
is bounded on $U$,
then the image of $F$ contains a light-like geodesic 
segment in $M$ passing through $F(o)$.}

\medskip
This assertion  does not 
require us to assume that
$o$ be a degenerate light-like point
because of the authors' recent joint work \cite[Theorem 1.1]{HKKUY}
with Honda, Koiso and Kokubu.
To prove  Theorem D, 
we modify the proof of Theorem A. 
Hypersurfaces with non-constant mean curvature 
satisfy the assumption of Theorem D.
So, we get the following:  

\medskip
\noindent
{\bf Theorem E.}
{\it If a $C^\omega$-differentiable
$($resp. $C^{4}$-differentiable$)$ 
non-zero constant $($resp.\ zero$)$ 
mean curvature hypersurface 
immersed in a Lorentzian $(n+1)$-manifold
$(n\ge 2)$ admits a light-like point
$($resp. a degenerate light-like point$)$, then the 
hypersurface contains a light-like geodesic
segment of the ambient manifold consisting of
the image of degenerate light-like points.}

\medskip
This theorem explains why known examples of constant 
mean curvature surfaces in the $3$-dimensional
Lorentzian space forms often contain degenerate 
light-like geodesics.
Examples with nonzero constant mean curvature
containing light-like lines in $\R^3_1$ and
the de Sitter 3-space are given in
\cite[Examples 2.7 and 2.8]{HKKUY} and \cite{FKKRUYY}, respectively.

\section{Preliminaries}
Let $M$ be a $C^r$-differentiable oriented 
Lorentzian $(n+1)$-manifold with Lorentzian metric $g$. 
Since any $C^r$-manifold
is uniquely smoothable to a $C^\omega$-structure,
we may assume $r=\omega$ without loss of generality.
We fix a point $\hat o\in M$ arbitrarily.
We denote by $\tilde {\mc I}^r(M,\hat o)$
($r\ge 2$) the set of
germs of $C^r$-immersions into $M$ 
such that $F(o)=\hat o$
(when $r=\omega$, 
$F\in \tilde {\mc I}^\omega(M,\hat o)$ means 
that $F$ is real analytic.)
We fix such an immersion
$$
F:U\to M
$$
belonging to $\tilde {\mc I}^r(M,\hat o)$, where
$U$ is a domain (i.e. connected open subset) of
$(\R^n;u_1,\dots,u_n)$
containing the origin $o$.
Then, each point $p\in U$ is 
called {\it space-like} (resp. {\it time-like, light-like}) 
if the tangent hyperplane of $F$ at $F(p)$ 
is space-like (resp. time-like, light-like)
with respect to the Lorentzian metric $g$.
We denote by $U_+$ (resp. $U_-$) 
the set of space-like (resp. time-like) points 
(cf. \eqref{eq:def02}).

\begin{definition} \label{eq:defA}
A local coordinate system $(x_0,\dots,x_n)$ ($t:=x_0$)
of $M$ centered at $\hat o$ is called {\it admissible}
if it satisfies the following two properties:
\begin{enumerate}
\item $g_{0,0}^{}=-1$,\,\, $g_{0,i}^{}=0$ 
and $g_{j,k}^{}=\delta_{j,k}$ hold
for $i,j,k=1,\dots,n$ 
at $\hat o$,
where
\begin{equation}\label{eq:g-coe}
g:=\sum_{i,j=0}^{n} g_{i,j}^{}dx_i dx_j.
\end{equation}
\item All the Christoffel symbols with respect to $g$
vanish at $\hat o$. In particular,
all derivatives $\partial g_{j,k}^{}/\partial x_i$
$(i,j,k=0,\dots,n)$ vanish at $\hat o$.
\end{enumerate}
\end{definition}

The normal coordinate system at $\hat o$ and the
 Fermi-coordinate system
along a light-like geodesic passing through $\hat o$
(see Appendix A) are admissible.
We fix an admissible coordinate system 
\begin{equation}\label{eq:coordinate}
(t,x_1,\dots,x_n)
\end{equation}
of $M$ centered at $\hat{o}$.

Let $U$ be a domain of $(\R^n;u_1,\dots,u_n)$ containing the
origin $o$.
We fix $F:(U,o)\to (M,\hat o)$ to be an immersion belonging 
to $\tilde {\mc I}^r(M,\hat o)$
such that $F(o)=\hat o$. Then the first fundamental form 
of $F$ can be written as
\begin{equation}\label{eq:ds}
  ds^2=\sum_{i,j=1}^n s_{i,j}^{}du_idu_j,\qquad
  s_{i,j}^{}:=
  g(dF(\partial/\partial u_i),dF(\partial/\partial u_j)).
\end{equation}
We set (cf.\ \eqref{eq:def01})
\begin{equation}\label{eq:BF}
B_F:=
  \det(S_F),\qquad
S_F:=\pmt{
  s_{1,1}^{} &\cdots & s_{1,n}^{} \\
 \vdots &\ddots & \vdots \\
 s_{n,1}^{} &\cdots & s_{n,n}^{}}. 
\end{equation}
We set
$$
F_{u_i}:=dF(\partial/\partial u_i)\qquad (i=1,\dots,n).
$$
Then we can write
$$
F_{u_i}= \sum_{j=0}^n \mu_{i}^{j} \partial_{x_j},
$$
where $\partial_{x_i}:=\partial/ \partial x_i$.
We then set
$$
\mu_{i,j}:=\sum_{k=0}^n \mu_{i}^{k}g_{k,j} \qquad (i=1,\dots,n,\,\, j=0,\dots n).
$$
For $(\xi_0,\xi_1,\dots,\xi_n)\in \R^{n+1}$, we can write 
$$
\det
\pmt{
\mu_{1,0} & \mu_{1,1} &\cdots & \mu_{1,n} \\
\mu_{2,0} & \mu_{2,1} &\cdots &\mu_{2,n} \\
\vdots    & \vdots & \ddots &  \vdots \\
\mu_{n,0} & \mu_{n,1} &\cdots & \mu_{n,n} \\
\xi_0 &   \xi_1 & \dots & \xi_n}
=\sum_{i=0}^n \xi_i \tilde\nu_i,
$$
where $\tilde \nu_{i}:U\to \R$ is a polynomial of degree $n$ 
in $(\mu_{j,k})_{j=1,...n,\,\,k=0,...,n}$.
Then
\begin{equation}\label{eq:nu0}
\tilde \nu:=\sum_{i=0}^n \tilde \nu_i \partial_{x_i}
\end{equation}
gives a normal vector field of $F$.
Moreover, 
$$
g(\tilde \nu,\tilde \nu)=-B_F
$$
holds.
We denote by $D$ the Levi-Civita connection associated to 
$g$,
and set
\begin{equation}\label{eq:AF}
 A_F:=\sum_{i,j=1}^n \tilde s^{i,j}\tilde h_{i,j},\qquad
  \tilde h_{i,j}
  :=g(D_{\partial /\partial u_i}
  dF(\partial /\partial u_j),\tilde \nu),
\end{equation}
where $(\tilde s^{i,j})_{i,j=1,\dots,n}$
is the cofactor matrix of 
$S_F$ as in \eqref{eq:BF}.
The mean curvature function $H_F$ of $F$ 
(with respect to the unit normal vector field
$\tilde \nu/\sqrt{|g(\tilde \nu,\tilde \nu)|}$)
and the mean curvature vector field $\vect{H}_F$ are
defined on the set of space-like or time-like points
by
\begin{equation}\label{eq:HF}
 H_F:=\frac{A_F}{n|B_F|^{3/2}},\qquad 
\vect{H}_F:=\frac{A_F}{n(B_F)^2}\tilde\nu,
\end{equation}
respectively.
We defined the area element
$\theta_F$ on $U$ as in the introduction
(cf. \eqref{eq:def03}).
Then 
\begin{equation}\label{eq:area}
\omega_H:=\frac{A_F}{nB_F}
du_1\wedge \cdots \wedge du_n=\op{sgn}(B_F)H_F \theta_F
\end{equation}
is defined on $U\setminus \Sigma_F$,
and is called the {\it mean curvature form} of $F$,
where $\op{sgn}(B_F)$ is the sign of the
function $B_F$.

\begin{remark}\label{rmk:1-2}
Since $H_F$ is not well-defined 
at the light-like points, there is another
possibility of the definitions of
mean curvature function and
mean  curvature field on $U\setminus \Sigma$
as follows:
\begin{equation}\label{eq:HF2}
 \hat H_F:=\op{sgn}(B_F)\frac{A_F}{n|B_F|^{3/2}},\qquad 
\hat {\vect{H}}_F:=\psi_F
\tilde\nu
\quad \left(\psi_F:=\op{sgn}(B_F)\frac{A_F}{n(B_F)^2}\right),
\end{equation}
respectively.
Then the mean curvature form
satisfies $\omega_H:=\hat H_F \theta_F$.
However, $\hat{\vect{H}}_F$ 
cannot be real analytic  when 
$U_+,U_-$ are not empty,
because of the factor $\op{sgn}(B_F)$.
On the other hand,
there is an example such that $\hat H_F$
is real analytic but $H_F$ is not
(cf. Example \ref{ex:X1}). However, 
the replacement of $H_F$ by $\hat H_F$
does not affect the statement of Theorem D, 
since hypersurfaces satisfying the condition of 
Theorem D never change their causal types (cf. 
\cite[Theorem 1.1]{HKKUY}).
\end{remark}

When $M=\R^{n+1}_1$, the 
explicit expressions of $A_F$ and $B_F$
are given in Appendix~B.

We fix $F:(U,o)\to (M,\hat o)$ to be an 
immersion belonging to $\tilde {\mc I}^r(M,\hat o)$,
where $r\ge 2$.
We denote by $\tilde {\mc I}^r_L(M,\hat o)
(\subset \tilde {\mc I}^r(M,\hat o))$
the set of germs of $C^r$-immersions such that $o$
is a light-like point.
If $F$ is in the class $\tilde {\mc I}^r_L(M,\hat o)$,
then the tangent hyperplane of the image of $F$ at $o$ contains 
a light-like vector, but does not contain any time-like vectors.
Thus, the image of $F$ can be  expressed as a graph of a function
$f$ defined on a certain neighborhood of the origin $o$,
that is, we can write
\begin{equation}\label{eq:Ftof}
   F(x_1,\dots,x_n)=(f(x_1,\dots,x_n),x_1,\dots,x_n),
\end{equation}
where $(x_0,x_1,\dots,x_n)$ is an fixed admissible coordinate
system centered at $\hat o:=F(o)$.
We call $f$ the {\it height function} induced by $F$, and denote it by
 $\iota_F:=f$.

If $M$ is the Lorentz-Minkowski space $\R^{n+1}_1$
and \eqref{eq:coordinate} is the canonical coordinate system,
it holds at $o$ that (see Appendix B for details)
\begin{align}
\label{eq:BF1}
B_F&=1-(f_{x_1})^2-\cdots -(f_{x_n})^2,\\
\label{eq:AF1}
A_F&=B_F \triangle f-
\frac12 \nabla B_F\star \nabla f,
\end{align}
where 
the star-dot \lq$\star$\rq\ denotes the canonical Euclidean 
inner product of $\R^{n}$, and
\begin{align*}
&f_{x_i}:=\frac{\partial f}{\partial x_i},\quad
f_{x_i,x_j}:=\frac{\partial^2 f}{\partial x_i\partial x_j}
\qquad (i,j=1,\dots,n),\\
&\nabla:=\left(\frac{\partial}{\partial x_1},
\dots,\frac{\partial}{\partial x_n}\right),\qquad
\triangle f:=\sum_{i=1}^n f_{x_i,x_i}.
\end{align*}

\medskip
We now return to the case of general $M$.  
By a suitable rotation 
of the coordinate system
with respect to the $t$-axis
($t:=x_0$),
we may assume
\begin{equation}\label{eq:c}
f(o)=0,\quad
f_{x_1}(o)=\cdots=f_{x_{n-1}}(o)=0,\quad
f_{x_n}(o)=1.
\end{equation}
We set 
$
\partial_{x_k}:=dF(\partial/\partial x_k)
$ ($k=1,\dots,n$).
Since we are using an admissible coordinate system,
the computation of 
the value of
$s_{i,j}^{}$ (cf. \eqref{eq:ds})
at $o$
is completely the same
as in the case of $\R^{n+1}_1$, so
we have that (cf. \eqref{eq:b1})
\begin{equation}
s_{i,j}^{}(o)=\delta_{i,j}-f_{x_i}(o)f_{x_j}(o)\qquad 
(i,j=1,\dots,n),
\end{equation}
where $\delta_{i,j}$ is Kronecker's delta.
In particular, \eqref{eq:BF1} holds at $o$.
Since all of the Christoffel symbols of the metric $g$
vanish at $\hat o(=F(o))$, 
\eqref{eq:AF1} holds at $\hat o$.

We denote by $\mc I^r_L(M,\hat o)$
the set of germs of $C^r$-immersions 
$F\in \tilde{\mc I}^r_L(M,\hat o)$
satisfying \eqref{eq:c}.

\begin{definition}\label{def:C1C2}
We denote by $C^r_0(\R^n)$ the set of 
$C^r$-function germs at $o\in \R^n$.
Let $C^r_1(\R^n)$ 
be the set of $C^r$-function 
germs $f\in C^r_0(\R^n)$ 
at $o\in \R^n$ satisfying $f(o)=0$. 
Moreover, $C^r_2(\R^{n})$ is the set of $C^r$-functions 
$\phi\in C^r_1(\R^{n})$
satisfying
$
\nabla\phi(0,\dots,0)=\vect{0}.
$
\end{definition}
By \eqref{eq:Ftof}, the map 
\[
  \iota:\mc I^r_L(M,\hat o)\ni F\to
 \iota_F\in \bigl\{f\in C^r_1(\R^n)\,;\,
  \mbox{$f$ satisfies \eqref{eq:c}}\bigr\}
\]
is induced.
The following assertion holds:
\begin{proposition}\label{prop:B0}
 If $p\in \Sigma_F$ is a degenerate light-like point,
 then $A_F$ vanishes at $p$.
 In particular, if $B_F$ vanishes identically, 
 then so does $A_F$.
\end{proposition}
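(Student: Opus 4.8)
The plan is to evaluate $A_F$ at $p$ directly through the graph formula \eqref{eq:AF1}, which is valid at the \emph{center} of an admissible chart, and to do this after relocating the base point from $o$ to $p$. The point to settle first is that such a relocation is legitimate, i.e.\ that both hypothesis and conclusion are, up to harmless factors, independent of the choices involved. Under a reparametrization of $U$ with Jacobian $J$ one has $S_F\mapsto J^{\top}S_F J$, hence $B_F\mapsto (\det J)^2 B_F$, while a short computation from \eqref{eq:AF} and \eqref{eq:nu0} (the non-tensorial second-derivative terms are tangential, hence pair to zero against the normal field $\tilde\nu$, and $\tilde\nu$ itself scales by $\det J$) gives $A_F\mapsto (\det J)^3 A_F$. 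Thus the vanishing of $A_F$ at a point is intrinsic, and since $d\bigl((\det J)^2 B_F\bigr)=(\det J)^2\,dB_F$ at a zero of $B_F$, the property of being a \emph{degenerate} light-like point is intrinsic as well. Moreover $A_F$ and $B_F$ are unaffected by an orientation-preserving change of the ambient coordinates, as they are built only from $g$, from $F$, and from $\tilde\nu$, the direction and length of the latter being fixed by the immersion and the orientation of $M$.

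With this in hand I would simply move the base point to $p$. First I would choose an admissible coordinate system of $M$ centered at $F(p)$; since $p$ is light-like, the tangent hyperplane there carries no time-like vector, so by the discussion preceding \eqref{eq:Ftof} the image of $F$ is, near $p$, the graph of a height function $f$ over the $(x_1,\dots,x_n)$-hyperplane, with $p$ corresponding to the origin. Because every Christoffel symbol of $g$ vanishes at the center $F(p)$ of an admissible chart, the values of $B_F$ and $A_F$ at $p$ are computed exactly as in the Lorentz--Minkowski case, so \eqref{eq:BF1} and \eqref{eq:AF1} hold at $p$; in particular
\[
 A_F(p)=B_F(p)\,\triangle f-\tfrac12\,\nabla B_F(p)\star \nabla f ,
\]
where $\nabla$ differentiates in the graph parameters $x_1,\dots,x_n$.

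Now the two hypotheses finish the argument by inspection. Since $p\in\Sigma_F$ is light-like, $B_F(p)=0$, which annihilates the first term. Since $p$ is degenerate, $dB_F(p)=0$; viewing $B_F$ as a function of the graph parameters, this says precisely that $\nabla B_F(p)=\vect{0}$, which annihilates the second term. Hence $A_F(p)=0$, as claimed. For the ``in particular'' statement, if $B_F\equiv 0$ then $\Sigma_F=U$ and $dB_F\equiv 0$, so every point of $U$ is a degenerate light-like point, and applying the first part pointwise yields $A_F\equiv 0$.

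I do not anticipate a genuine obstacle once the graph formula is available: the conclusion then drops out immediately from $B_F(p)=0$ and $\nabla B_F(p)=\vect 0$. The only step demanding real care is the reduction in the first paragraph---verifying that ``degenerate light-like'' and ``$A_F=0$'' are independent of the parametrization of $U$ and of the admissible chart, so that the base point may be shifted from $o$ to the arbitrary point $p$. This is the transformation bookkeeping sketched above, and it is routine rather than deep.
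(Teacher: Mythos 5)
Your proof is correct and follows essentially the same route as the paper: pass to an admissible coordinate system centered at $F(p)$, where all Christoffel symbols vanish, so the Minkowski formulas \eqref{eq:BF1} and \eqref{eq:AF1} apply at $p$, and then $B_F(p)=0$ and $\nabla B_F(p)=\vect{0}$ annihilate both terms of $A_F(p)$. The invariance bookkeeping you add (which the paper leaves implicit) is sound in substance, though strictly speaking $\tilde\nu$, and hence $A_F$, rescales by the inverse Jacobian determinant under ambient coordinate changes rather than being exactly unaffected—harmless here, since only the vanishing of $A_F$ at a point is at stake.
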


\begin{proof}
If we take an admissible coordinate system
at $p$, then all of the Christoffel symbols vanish
at $p$.  So it is
sufficient to show the case 
$M=\R^{n+1}_1$.
Since $p$ is a degenerate light-like point of $F$,
$B_F$ and $\nabla B_F$ vanish at $p$.
Thus, \eqref{eq:AF1} yields that $A_F(p)=0$. 
\end{proof}

\begin{definition}\label{def:B0}
We denote by 
$
\Lambda^r(M,\hat o)
$ the set of germs
of immersions $F\in \mathcal I^r_L(M,\hat o)$ with
identically vanishing $B_F$.
In other words, $\Lambda^r(M,\hat o)$
can be considered as the set of germs 
of light-like hypersurfaces. 
\end{definition}

A typical example of a light-like hypersurface in $\R^{n+1}_1$
is the light-cone with vertex at $(-1,0,0,-1)$,
 which is the graph of the function
\begin{equation}\label{eq:L-cone}
f(x_1,\dots,x_{n}):=\sqrt{(x_1)^2+\cdots+(x_{n}+1)^2}-1.
\end{equation}

The following assertion is a general existence theorem for
light-like hypersurfaces in $M$, that is, a
generalization of  \cite[Proposition 2.2]{UY} in the case 
$n=2$ and $M=\R^3_1$.

\begin{theorem}\label{thm:EB}
Suppose that $M$ is a $C^\omega$-Lorentzian manifold. 
Then the map
$$
\lambda:\Lambda^\omega
(M,\hat o)
\ni F\mapsto (\lambda_F:=)f(x_1,\dots,x_{n-1},0)
\in C^\omega_2(\R^{n-1})
$$
is bijective, where $f:=\iota_F$.
\end{theorem}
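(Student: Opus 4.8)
The plan is to recognize the defining condition $B_F\equiv 0$ as a first–order analytic partial differential equation for the height function $f=\iota_F$, and to solve its Cauchy problem along $\{x_n=0\}$ by the Cauchy–Kovalevskaya theorem, identifying the Cauchy data with $\lambda_F$. First I would check that $\lambda$ is well defined: for $F\in\Lambda^\omega(M,\hat o)$ the restriction $\lambda_F(x_1,\dots,x_{n-1})=f(x_1,\dots,x_{n-1},0)$ is real analytic, and the normalization \eqref{eq:c} gives $\lambda_F(o)=f(o)=0$ together with $\partial\lambda_F/\partial x_i(o)=f_{x_i}(o)=0$ for $i=1,\dots,n-1$, so $\lambda_F\in C^\omega_2(\R^{n-1})$. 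Thus the content of the theorem is that, for each prescribed datum $\psi\in C^\omega_2(\R^{n-1})$, there is exactly one light-like germ whose height function restricts to $\psi$ on $\{x_n=0\}$.

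The key step is to put $B_F=0$ into Kovalevskaya normal form. Writing $F$ as the graph \eqref{eq:Ftof}, each coefficient $s_{i,j}$ of the first fundamental form is obtained by pairing the tangent frame of the graph under $g$ evaluated at $F(x)=(f(x),x)$, and therefore depends only on the point $F(x)$ and on the first derivatives $f_{x_1},\dots,f_{x_n}$; no higher derivatives occur. Hence $B_F=\det S_F$ can be written as $G(x,f,\nabla f)$ for a real analytic function $G$ of the jet variables $(x,z,p)$ with $p=(p_1,\dots,p_n)$. Because we use an admissible coordinate system at $\hat o$, the expression \eqref{eq:BF1} holds at the base point $(x,z,p)=(o,0,(0,\dots,0,1))$, so there $G(o,0,p)=1-|p|^2$, whence $\partial G/\partial p_n=-2p_n=-2\neq 0$. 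By the implicit function theorem the equation $G=0$ can be solved near the base point for $p_n$, giving a real analytic function $\Phi$ with $p_n=\Phi(x,z,p_1,\dots,p_{n-1})$ and value $\Phi=1$ at the base point; this is exactly the branch singled out by $f_{x_n}(o)=1$ in \eqref{eq:c}. Any $f$ satisfying the first–order equation $f_{x_n}=\Phi(x,f,f_{x_1},\dots,f_{x_{n-1}})$ then automatically satisfies $B_F=G(x,f,\nabla f)\equiv 0$.

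With the equation in this form the conclusion follows from Cauchy–Kovalevskaya. The equation $f_{x_n}=\Phi(x,f,f_{x_1},\dots,f_{x_{n-1}})$ is in normal form with respect to $x_n$, and its Cauchy datum on $\{x_n=0\}$ is precisely $f(x_1,\dots,x_{n-1},0)$. Hence, given $\psi\in C^\omega_2(\R^{n-1})$, there is a unique real analytic germ $f$ with $f|_{x_n=0}=\psi$ solving the equation. I would then verify that the resulting graph $F$ lies in $\Lambda^\omega(M,\hat o)$: it is an immersion with $B_F\equiv 0$, and \eqref{eq:c} holds because $f(o)=\psi(o)=0$, $f_{x_i}(o)=\psi_{x_i}(o)=0$ $(i<n)$ and $f_{x_n}(o)=\Phi=1$ at the base point; moreover $\lambda_F=\psi$, giving surjectivity. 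Injectivity is the uniqueness half of Cauchy–Kovalevskaya: two light-like germs with the same $\lambda_F$ have the same Cauchy datum and both satisfy the branch $f_{x_n}=\Phi$ near $o$ (forced by $f_{x_n}(o)=1$ from \eqref{eq:c}), so they coincide as germs. The one point requiring care — and the main obstacle — is the bookkeeping at the base point: verifying the non-characteristic condition $\partial G/\partial p_n\neq 0$ and checking that selecting the branch $\Phi=1$ is exactly equivalent to imposing \eqref{eq:c}, so that the admissible Cauchy data fill out precisely $C^\omega_2(\R^{n-1})$ and nothing more.
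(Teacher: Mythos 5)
Your proposal is correct and follows essentially the same route as the paper: write $B_F=0$ as a real analytic equation in the jet variables $(x,f,\nabla f)$, use admissibility of the coordinates to verify $\partial G/\partial p_n=-2\neq 0$ at the base point, solve for $f_{x_n}$ by the implicit function theorem, and then apply the Cauchy--Kovalevskaya theorem with Cauchy datum $\lambda_F$ on $\{x_n=0\}$. The only cosmetic difference is that the paper treats $M=\R^{n+1}_1$ separately before the general case, while you handle both at once and spell out the branch-selection and injectivity bookkeeping that the paper leaves implicit.
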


\begin{proof}
We first consider the case that $M=\R^{n+1}_1$.
Since $f_{x_n}(o)=1$, 
the identity $B_F=0$ is equivalent to the relation
$$
f_{x_n}=\sqrt{1-\sum_{i=1}^{n-1}(f_{x_i})^2},
$$
which  gives a normal form of a partial differential equation
under the initial condition 
\begin{equation}\label{eq:iniB}
f(x_1,\dots,x_{n-1},0)=\lambda(x_1,\dots,x_{n-1})
\end{equation}
for a given $\lambda\in C^\omega_2(\R^{n-1})$.
(By \eqref{eq:c}, $f(x_1,\dots,x_{n-1},0)$
belongs to the class $C^\omega_2(\R^{n-1})$.)
So we can apply the Cauchy-Kovalevski theorem
(cf.\ \cite{T}), and get the uniqueness and 
existence of such a function $f$.

We next consider the case for general $M$
as follows: 
The function  $B_F$ can be written in terms of
$g_{i,j}^{}$ and $f_{x_k}$ ($i,j,k=1,\dots,n$).
In fact, 
$$
B_F=\op{det}(S),\qquad
S:=\biggl(f_{x_i}f_{x_j}\,g_{0,0}^{}+
 f_{x_i}g^{}_{0,j} + f_{x_j}g^{}_{i,0}+
 g_{i,j}^{}\biggr)_{i,j=1,\dots,n}
$$
holds on a neighborhood of $o$. 
Regarding $f_{x_i}$ ($i=1,\dots,n$) as variables,
we set
\begin{align*}
&\beta(t,x_1,\dots,x_n,f_1,\dots,f_{n})\\
&\phantom{}:=\op{det}
\biggl(\bigl(
f_{i}f_{j}\,g_{0,0}^{}(t,x_1,\dots,x_n)
+ f_{i} g_{0,j}^{}(t,x_1,\dots,x_n) \\
&\phantom{aaaaaaaaaaaaaaaaa}
+ f_{j} g_{i,0}^{}(t,x_1,\dots,x_n)
+g_{i,j}^{}(t,x_1,\dots,x_n)
\bigr)_{i,j=1,\dots,n}\biggr)
\end{align*}
to be a function of $t,x_1,\dots,x_n,f_1,\dots,f_{n}$ 
so that
$$
B_F=\beta(t, x_1,\dots,x_n,f_{x_1},\dots,f_{x_n}).
$$

For example, if $M=\R^{n+1}_1$,
then $\beta$ satisfies (cf. Appendix B)
$$
\beta=1-\sum_{i=1}^n (f_i)^2.
$$
 
Consider the case of general $M$.
Since $(t,x_1,\dots,x_n)$ is an admissible coordinate system, 
the coefficients $g_{i,j}^{}$ of the Lorentzian metric
given by \eqref{eq:g-coe}
satisfy ($\delta_{i,j}$ is Kronecker's delta)
\begin{equation}\label{normal1}
g_{i,j}^{}(o)=
\begin{cases}
\delta_{i,j} & \mbox{$i,j=1,\dots,n$}, \\
0           & \mbox{$i=1,\dots,n$, \,\, $j=0$}, \\
0           & \mbox{$i=0$,\,\, $j=1,\dots,n$}, \\
-1           & \mbox{$i=j=0$}, 
\end{cases} 
\end{equation}
and
\begin{equation}\label{normal2}
(g_{i,j}^{})_k(o)=0
\qquad (i,j,k=0,1,\dots,n),
\end{equation}
where $(g_{i,j}^{})_k:=\partial g_{i,j}^{}/\partial x_k$.
By considering \eqref{normal1},
the fact that the coefficient of 
$(f_{x_n})^2$ in \eqref{eq:BF} is $-1$ at $o$
implies 
$$
\frac{\partial \beta}{\partial f_{n}}=-2, 
$$
when
$t=x_1=\cdots=x_n=f_1=\cdots=f_{n-1}=0$ and  
$f_{n}=1$.
By the implicit function theorem, 
there exists an analytic function of 
several variables
$$
X:=X(t,x_1,\dots,x_n,f_1,\dots,f_{n-1})
$$
defined on a small neighborhood of the origin
of $\R^{2n-1}$ such that
$$
\beta(t,x_1,\dots,x_n,f_1,\dots,f_{n-1},X(t, x_1,\dots,x_n,f_1,\dots,f_{n-1}))=0.
$$
(For example, if $M=\R^{n+1}_1$, 
we have
$
X=(1-\sum_{k=1}^{n-1} (f_k)^2)^{1/2}
$.)
To find the desired $f$, it is sufficient to
solve the partial differential equation 
\[
  f_{x_n}=X(f,x_1,\dots,x_n,f_{x_1},\dots,f_{x_{n-1}})
\]
under the initial condition \eqref{eq:iniB}.
By the Cauchy-Kovalevski theorem (cf. \cite{T}),
the uniqueness and existence of $f$ can be shown.
\end{proof}

\begin{example}\label{ex:P}
 The light-like hyperplane
 $F(x_1,\dots,x_n):=(x_n,x_1,\dots,x_n)$
 belongs to the class $\Lambda^\omega(\R^{n+1}_1)$
 such that
 $\lambda_F(x_1,\dots,x_{n-1})=0$.
\end{example}

\begin{example}\label{ex:L}
The light-cone given in
\eqref{eq:L-cone}
 belongs to the class $\Lambda^\omega(\R^{n+1}_1)$
 such that
 $\lambda_F=\sqrt{1+(x_1)^2+\cdots+(x_{n-1})^2}-1$.
\end{example}

\section{General existence of hypersurfaces in 
$\XX^{\omega}_{\alpha}(M,\hat o)$}

Let $F:U\to M$ be an immersion
in the class $\mc I^r_L(M,\hat o)$ ($r\ge 2$) 
such that $U\setminus \Sigma_F$ is an open dense subset of $U$,
and fix a $C^{r-2}$-function  $\phi$ on $U$.
For a non-negative integer $\alpha$,
we say that 
$F$ is \emph{$(\phi,\alpha)$-admissible} 
if
\begin{equation}\label{eq:H2}
   A_F-\phi (B_F)^{\alpha+1}=0,
\end{equation} 
where $A_F$ and $B_F$ are given 
in \eqref{eq:AF} and \eqref{eq:BF},
respectively. 
On the other hand, suppose that $\alpha(>0)$ is
not an integer.
Then $F$ is said to be \emph{$(\phi,\alpha)$-admissible} 
if
\begin{equation}\label{eq:H2add}
   A_F-\phi |B_F|^{\alpha+1}=0.
\end{equation}%
We set
\begin{equation}\label{eq:Y}
\XX^{r}_{\alpha,\varphi}(M,\hat o) :=\left\{F\in\mc 
I^r_L(M,\hat o)
\,;\,\text{$F$ is $(\varphi,\alpha)$-admissible}\right\}.
\end{equation}
An immersion germ $F\in \mc I^{r}_L(M,\hat o)$ is 
called $\alpha$-\emph{admissible} if
it is $(\phi,\alpha)$-admissible for a 
certain $\phi\in C^{r-2}_0(\R^n)$.
The set
\[
  \XX^{r}_{\alpha}(M,\hat o):=\bigcup_{\phi\in C^{r-2}_0(\R^n)}
  \XX^{r}_{\alpha,\phi}(M,\hat o)
\]
consists of all germs of $\alpha$-admissible immersions.

\begin{proposition}\label{prop:000}
If $\alpha$ is a positive integer, then 
$
  \XX^{r}_{\alpha}(M,\hat o)
\subset  \XX^{r}_{\alpha-1,\phi}(M,\hat o)
$
holds, that is, {\rm (C4)} in the introduction holds. 
\end{proposition}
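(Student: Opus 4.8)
The plan is to prove the inclusion directly, by factoring a single power of $B_F$ out of the defining identity and absorbing it into the coefficient function. I would start with an arbitrary germ $F\in\XX^{r}_{\alpha}(M,\hat o)$. By the definition of the union defining $\XX^r_\alpha$, there is some $\phi\in C^{r-2}_0(\R^n)$ for which $F$ is $(\phi,\alpha)$-admissible; and since $\alpha$ is a positive integer, the relevant condition is the integer one \eqref{eq:H2}, i.e.
$$
 A_F=\phi\,(B_F)^{\alpha+1}.
$$

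The key step is then to rewrite the right-hand side as $A_F=(\phi\, B_F)(B_F)^{\alpha}$ and to set $\psi:=\phi\, B_F$. I would first check that $\psi\in C^{r-2}_0(\R^n)$: the factor $\phi$ is $C^{r-2}$ by hypothesis, while $B_F=\det(S_F)$ is a polynomial in the entries $s_{i,j}$, which by \eqref{eq:ds} and \eqref{eq:BF} are $C^{r-1}$ functions built from the first derivatives of the $C^r$-immersion $F$; hence $B_F$ is of class $C^{r-1}\subset C^{r-2}$, and so is the product $\psi$. Next, because $\alpha-1$ is a non-negative integer, the integer admissibility condition \eqref{eq:H2} at level $\alpha-1$ reads $A_F=\psi\,(B_F)^{(\alpha-1)+1}=\psi\,(B_F)^{\alpha}$, which is exactly the identity just produced. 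Thus $F$ is $(\psi,\alpha-1)$-admissible, so $F\in\XX^{r}_{\alpha-1,\psi}(M,\hat o)\subset\XX^{r}_{\alpha-1}(M,\hat o)$. As $F$ was arbitrary, this yields the claimed containment, which is precisely (C4).

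I do not expect a genuine obstacle here; the only thing to watch is the bookkeeping of the power conventions. Because both $\alpha$ and $\alpha-1$ are (non-negative) integers, both admissibility conditions use the polynomial form $(B_F)^{k+1}$ rather than the absolute-value form $|B_F|^{k+1}$, so the factorization $A_F=(\phi B_F)(B_F)^{\alpha}$ is an honest equality between the two defining expressions, with no sign or absolute-value subtlety to reconcile. This is exactly the point at which the hypothesis that $\alpha$ be a positive integer is used.
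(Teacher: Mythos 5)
Your proof is correct and follows essentially the same route as the paper's: the paper's entire argument is the one-line observation that setting $\psi:=\phi B_F$ gives $\XX^{r}_{\alpha,\phi}(M,\hat o)\subset \XX^{r}_{\alpha-1,\psi}(M,\hat o)$, which is exactly your key step. Your additional checks (that $\psi\in C^{r-2}_0(\R^n)$ because $B_F$ is of class $C^{r-1}$, and that no absolute-value convention interferes since both exponents are integers) are sound bookkeeping that the paper leaves implicit.
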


\begin{proof}
If we set $\psi:=\phi B_F$,
then we have
$\XX^{r}_{\alpha,\phi}(M,\hat o)
\subset \XX^{r}_{\alpha-1,\psi}(M,\hat o)$.
\end{proof}

Recall that
$
\Y^r(M,\hat o):=\XX^{r}_{1}(M,\hat o)
$
is the set of germs of 
$C^r$-immersions $F$ at $o$ such that $F(o)=\hat o$,
and  the mean curvature vector field 
$\vect{H}_F$ can be extended on a neighborhood of $o$ 
with $C^{r-2}$-differentiability.
We prove the following:

\begin{proposition}\label{prop:3}
The assertions {\rm (C1),\, (C2),\, (C3)} 
in the introduction hold.
\end{proposition}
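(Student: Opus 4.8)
The plan is to derive all three assertions directly from the defining formulas
\[
\vect{H}_F=\frac{A_F}{n(B_F)^2}\tilde\nu,\qquad
H_F=\frac{A_F}{n|B_F|^{3/2}},\qquad
\omega_H=\frac{A_F}{nB_F}\,du_1\wedge\cdots\wedge du_n,
\]
each valid on the dense open set $U\setminus\Sigma_F$, together with two structural facts: the normal field $\tilde\nu=\sum_{i=0}^n\tilde\nu_i\partial_{x_i}$ is a nowhere-vanishing $C^{r-1}$ (hence $C^{r-2}$) vector field because $F$ is an immersion, and $U\setminus\Sigma_F$ is dense, so any $C^{r-2}$ object defined there has at most one $C^{r-2}$ extension to $U$. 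In each case, membership in $\XX^r_\alpha$ says precisely that $A_F$ is divisible, with a $C^{r-2}$ quotient, by the relevant power of $B_F$, and the three claims translate this divisibility into the extendability of $\vect{H}_F$, of $\omega_H$, and of $H_F$.

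For (C1), if $F\in\XX^r_1$, so that $A_F=\varphi(B_F)^2$ for some $\varphi\in C^{r-2}_0(\R^n)$, then on $U\setminus\Sigma_F$ we have $\vect{H}_F=(\varphi/n)\tilde\nu$; since $\varphi$ is $C^{r-2}$ and $\tilde\nu$ is $C^{r-1}$, this is a $C^{r-2}$ extension, giving $F\in\Y^r$. Conversely, assume $\vect{H}_F$ extends $C^{r-2}$-differentiably. I would choose an index $k\in\{0,\dots,n\}$ with $\tilde\nu_k(o)\neq0$, so that $\tilde\nu_k$ is nonzero on a neighborhood of $o$; the $\partial_{x_k}$-component of $\vect{H}_F$ equals $\frac{A_F}{n(B_F)^2}\tilde\nu_k$ and is $C^{r-2}$. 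Dividing by the nowhere-zero $C^{r-1}$ function $\tilde\nu_k$ shows that $A_F/(B_F)^2$ extends to a $C^{r-2}$ function $\varphi$, and by density $A_F=\varphi(B_F)^2$ on all of $U$, so $F\in\XX^r_1$.

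The remaining two parts are shorter. Since $du_1\wedge\cdots\wedge du_n$ is a nowhere-vanishing smooth $n$-form, the mean curvature form $\omega_H=\op{sgn}(B_F)H_F\theta_F$ extends as a $C^{r-2}$ form on $U$ if and only if its coefficient $A_F/(nB_F)$ extends as a $C^{r-2}$ function, i.e.\ if and only if $A_F=\varphi B_F$ for some $\varphi\in C^{r-2}_0(\R^n)$; this is exactly $F\in\XX^r_0$, proving (C2). For (C3), if $F\in\XX^r_{1/2}$ then $A_F=\varphi|B_F|^{3/2}$ with $\varphi\in C^{r-2}_0(\R^n)$, whence $H_F=\varphi/n$ on $U\setminus\Sigma_F$ extends $C^{r-2}$-differentiably; and a hypersurface of constant mean curvature $c$ satisfies $A_F=(nc)|B_F|^{3/2}$, the case $\varphi\equiv nc$, so it lies in $\XX^r_{1/2}$.

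The only genuinely delicate point I expect is the converse of (C1): recovering the scalar factor $A_F/(B_F)^2$ from the \emph{vector} field $\vect{H}_F$. This works only because $\tilde\nu$ never vanishes (it is the normal of an immersion), so a fixed component $\tilde\nu_k$ stays nonzero near $o$ and may be divided out without loss of differentiability. Elsewhere the proof is bookkeeping with the definitions and with the density of $U\setminus\Sigma_F$.
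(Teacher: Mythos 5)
Your proof is correct and takes essentially the same approach as the paper: the forward implications are read off directly from the defining formulas, and the converse of (C1) is settled by recovering the scalar factor $A_F/(B_F)^2$ from the extended field $\vect{H}_F$ using that $\tilde\nu$ never vanishes. The only cosmetic difference is that the paper extracts this factor by pairing $\vect{H}_F$ with $\tilde\nu$ via an auxiliary Riemannian metric, whereas you divide by a nonvanishing coordinate component $\tilde\nu_k$; both exploit exactly the same fact and carry the same bookkeeping.
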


\begin{proof}
If $\vect{H}_F$ (resp.\ $\omega_H$)
can be extended on a neighborhood of $o$ 
with $C^{r-2}$-differentiability,
then we have \eqref{eq:H2} by setting $\alpha=1$
(resp. $\alpha=0$), and we consider the function $\phi$ 
satisfying (cf.\ \eqref{eq:HF} and \eqref{eq:area})
$$
\phi=n \langle \vect{H}_F,\tilde \nu\rangle /
\sqrt{\langle\tilde \nu,\tilde \nu\rangle}
\qquad \left(\mbox{resp.\,\,\,}
\omega_H=\frac{\phi}n du_1\wedge \cdots \wedge du_n\right),
$$
where $\langle\,,\,\rangle$ denotes a certain Riemannian metric
on $M$. 
So we obtain (C1) and (C2).
The assertion (C3) immediately follows
from \eqref{eq:HF} and
\eqref{eq:H2add}.
\end{proof}

Since $\vect{H}_F$ (cf. \eqref{eq:HF})
vanishes when $\phi=0$, 
the subset   
\begin{equation}\label{eq:Z}
   \Z^r(M,\hat o)
:=\{F\in \mc I^{r}_L(M,\hat o)\,;\, 
A_F=0\}\left(\subset \Y^r(M,\hat o)\right)
\end{equation}
consists of germs of zero 
mean curvature immersions
in $M$ at the light-like point $o$.
By definition, we have (cf.\ Proposition~\ref{prop:B})
\begin{equation}
 \Lambda^r(M,\hat o)\subset 
\Z^r(M,\hat o) \subset \Y^r(M,\hat o)
\subset \XX^{r}_{0}(M,\hat o)
\qquad (r\ge 2).
\end{equation}
When $M=\R^{n+1}_1$, we denote
$\Lambda^r(M,\hat o)$, $\Z^r(M,\hat o)$, 
$\Y^r(M,\hat o)$ 
and $\XX^{r}_{\alpha}(M,\hat o)$
by
$$
\Lambda^r(\R^{n+1}_1),\quad
\Z^r(\R^{n+1}_1),\quad
\Y^r(\R^{n+1}_1),\quad \XX^{r}_{\alpha}(\R^{n+1}_1),
$$
since $\R^{n+1}_1$ is a homogeneous space.

In this paper, 
we shall generalize the authors'
previous results in \cite{UY} on
$\Y^r(\R^{3}_1)$ to the corresponding results on 
the wider class $\XX^{r}_{0}(M,\hat o)$.
We prepare the following:

\begin{lemma}\label{lem:nablaB}
Let $F$ be an immersion
belonging to the class $\XX^{3}_{\alpha}(M,\hat o)$
$(\alpha=0,1,2,\dots)$.
Then, $f:=\iota_F$ satisfies
$(B_F)_{x_n}(o)=f_{x_n,x_n}(o)=0$.
Moreover, $o$ is a degenerate light-like point
if and only if 
$
 f_{x_n,x_1}(o)=\cdots=f_{x_n,x_{n-1}}(o)=0.
$
\end{lemma}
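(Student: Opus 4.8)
My plan is to compute the first-order partial derivatives of $B_F$ at $o$ directly from the degenerate structure of the first fundamental form there. By the normalization \eqref{eq:c} and the admissibility relations \eqref{normal1}, the matrix $S_F$ of \eqref{eq:BF} satisfies $s_{i,j}(o)=\delta_{i,j}-f_{x_i}(o)f_{x_j}(o)$, so $S_F(o)=\op{diag}(1,\dots,1,0)$ because only $f_{x_n}(o)=1$ survives. Hence its cofactor matrix $(\tilde s^{i,j})$ has the single nonzero entry $\tilde s^{n,n}(o)=1$. Applying the derivative-of-determinant (Jacobi) formula $dB_F=\sum_{i,j}\tilde s^{i,j}\,ds_{i,j}$ and evaluating at $o$, every term except the $(n,n)$ one drops out, giving
\[
  (B_F)_{x_k}(o)=(s_{n,n})_{x_k}(o)\qquad(k=1,\dots,n).
\]

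Next I would evaluate the right-hand side. Writing $s_{n,n}=(f_{x_n})^2\,g_{0,0}+2f_{x_n}\,g_{0,n}+g_{n,n}$ (the $(n,n)$-entry of $S_F$ coming from \eqref{eq:ds}), I differentiate in $x_k$. The subtle point, and the one I expect to be the main source of error if handled carelessly, is that the metric coefficients are evaluated along the graph, so $t=x_0$ is the function $f$; thus $\partial_{x_k}g_{a,b}=(g_{a,b})_t f_{x_k}+(g_{a,b})_{x_k}$. By admissibility \eqref{normal2} \emph{all} first derivatives of the $g_{a,b}$ --- including the $t$-derivatives --- vanish at $\hat o$, so every chain-rule term dies at $o$. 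Using $g_{0,0}(o)=-1$, $g_{0,n}(o)=0$ and $f_{x_n}(o)=1$, what remains is
\[
  (B_F)_{x_k}(o)=(s_{n,n})_{x_k}(o)=-2\,f_{x_n,x_k}(o)\qquad(k=1,\dots,n).
\]

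It then remains to show $(B_F)_{x_n}(o)=0$ (equivalently $f_{x_n,x_n}(o)=0$), and here I would finally use $F\in\XX^{3}_{\alpha}(M,\hat o)$. Since $o$ is light-like we have $B_F(o)=0$, and as $\alpha+1\ge 1$ the defining relation \eqref{eq:H2} (or \eqref{eq:H2add}) forces $A_F(o)=0$. On the other hand, evaluating \eqref{eq:AF1} at $o$, where $B_F(o)=0$ and $\nabla f(o)=(0,\dots,0,1)$, gives $A_F(o)=-\tfrac12\nabla B_F(o)\star\nabla f(o)=-\tfrac12(B_F)_{x_n}(o)$. Comparing, $(B_F)_{x_n}(o)=0$, and the displayed identity with $k=n$ then yields $f_{x_n,x_n}(o)=0$, which is the first assertion.

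For the equivalence, recall that $o$ is degenerate precisely when $dB_F(o)=0$, i.e.\ $(B_F)_{x_1}(o)=\cdots=(B_F)_{x_n}(o)=0$. Since $(B_F)_{x_n}(o)=0$ already holds, this is equivalent to $(B_F)_{x_1}(o)=\cdots=(B_F)_{x_{n-1}}(o)=0$, which by the displayed identity is exactly $f_{x_n,x_1}(o)=\cdots=f_{x_n,x_{n-1}}(o)=0$. The only genuinely delicate ingredient throughout is the bookkeeping of the substitution $t=f$ in the second paragraph; once one notices that admissibility annihilates all first derivatives of the metric at $\hat o$ simultaneously, the computation collapses to the clean factor $-2f_{x_n,x_k}(o)$.
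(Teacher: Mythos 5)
Your proof is correct and follows essentially the same route as the paper: the defining relation of $\XX^{3}_{\alpha}(M,\hat o)$ together with $B_F(o)=0$ forces $A_F(o)=0$, which via \eqref{eq:AF1} gives $(B_F)_{x_n}(o)=0$, and the identity $(B_F)_{x_k}(o)=-2f_{x_n,x_k}(o)$ then yields both assertions. The only difference is in presentation: where the paper differentiates the flat formula \eqref{eq:BF1} directly (implicitly justified by admissibility killing all first derivatives of the metric at $\hat o$), you obtain the same identity via Jacobi's determinant-derivative formula and an explicit chain-rule computation, which is a more careful write-up of the identical step.
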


\begin{proof}
Since $(t,x_1,\dots,x_n)$ is an admissible  coordinate system of
$M$ at $\hat o$, we can use 
\eqref{eq:BF1} and
\eqref{eq:c}.
Since $F\in \XX^{r}_{\alpha}(M,\hat o)$ ($r\ge 3$) 
and $B_F(o)=0$, 
we have (the star-dot \lq$\star$\rq\ is the 
canonical Euclidean inner product of $\R^n$)
\begin{align*}
  0&=A_F(o)-\phi(o) (B_F)^{1+\alpha}(o)=A_F(o)\\
&=-\frac12\nabla B_F(o) \star \nabla f(o)
   =-\frac12(B_F)_{x_n}(o)f_{x_n}(o)
   \left(=-f_{x_n,x_n}(o)\right),
\end{align*}
which implies the
first assertion $(B_F)_{x_n}(o)=f_{x_n,x_n}(o)=0$.
On the other hand, we have 
\begin{align}
 \label{eq:dxy0}
 (B_F)_{x_j}(o)
 &=-2\left(\sum_{i=1}^{n-1}f_{x_i}(o)f_{x_i,x_j}(o)\right)
 -2 f_{x_n,x_j}(o)f_{x_n}(o)\\
 &=-2 f_{x_n,x_j}(o) \nonumber
 \quad (j=1,\dots,n-1).
\end{align}
Then by  \eqref{eq:dxy0}, it is obvious that 
$\nabla B(o)=\mb 0$  if and only if 
$(f_{x_n,x_1},\dots,f_{x_n,x_{n-1}})$ vanishes at $o$.
\end{proof}

In this section, we  show 
the following general existence result.

\begin{theorem}\label{thm:EA}
We fix a non-negative integer $\alpha$.
For each $\phi\in C^\omega_0(\R^n)$,
the map
$$
\eta^\alpha:\XX^{\omega}_{\alpha,\phi}(M,\hat o)\ni F
\mapsto \eta^\alpha(F)\in 
C^\omega_2(\R^{n-1})\times 
C^\omega_1(\R^{n-1})
$$
defined by
$($for the definitions of 
$C_{1}^\omega(\R^{n-1})$ and $C_{2}^\omega(\R^{n-1})$,
see Definition \ref{def:C1C2}$)$
$$
{\eta^\alpha(F)}\left(=(\eta_0(F),\eta_1(F))\right)
:=\biggl(f(x_1,\dots,x_{n-1},0),f_{x_n}(x_1,\dots,x_{n-1},0))-1\biggr)
$$
is bijective. 
Moreover, the base point 
$o$ is a non-degenerate light-like point of $F$  
if and only if 
\[
 \nabla\psi:=
  (\psi_{x_1},\dots,
   \psi_{x_{n-1}})\ne \mb 0
\]
at the origin of $\R^{n-1}$,
where $\psi:=f_{x_n}(x_1,\dots,x_{n-1},0)$.
\end{theorem}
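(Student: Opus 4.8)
The plan is to realize $\eta^\alpha$ as the Cauchy-data map for the second-order partial differential equation that defines $(\phi,\alpha)$-admissibility, and then to invoke the Cauchy--Kovalevski theorem. Writing $F$ as the graph $F(x_1,\dots,x_n)=(f(x_1,\dots,x_n),x_1,\dots,x_n)$ of its height function $f=\iota_F$ (cf.\ \eqref{eq:Ftof}), the condition $A_F-\phi(B_F)^{\alpha+1}=0$ becomes a single scalar equation in $f$. Here $B_F$ involves only the first derivatives of $f$, so the term $\phi(B_F)^{\alpha+1}$ is of first order; all second derivatives of $f$ enter the equation only through $A_F$, and in fact $A_F$ is affine-linear in the second derivatives, since $\tilde h_{i,j}$ is linear in them (the Christoffel contributions to $D_{\partial/\partial u_i}dF(\partial/\partial u_j)$ carry no second derivatives).

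First I would determine the coefficient of $f_{x_n,x_n}$ in $A_F$. Using \eqref{eq:AF1}, which holds at $\hat o$ because the coordinate system is admissible, the coefficient of $f_{x_n,x_n}$ in $B_F\triangle f-\frac12\nabla B_F\star\nabla f$ equals $B_F+(f_{x_n})^2=1-\sum_{i=1}^{n-1}(f_{x_i})^2$, which by \eqref{eq:c} equals $1$ at $o$. For general $M$ this coefficient depends on the metric $g$ and on $\nabla f$ only pointwise (no derivatives of $g$ appear in the second-order part of $\tilde h_{i,j}$), so at $o$, where $g$ and $\nabla f$ match their Lorentz--Minkowski values, it again equals $1$. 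Hence the coefficient of $f_{x_n,x_n}$ is a real analytic function equal to $1$ at $o$, and therefore nonvanishing on a neighborhood of $o$. Establishing this non-characteristic property is the crux of the argument: it is precisely what lets the equation be placed in normal form, even though $B_F$ itself vanishes at $o$.

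Dividing by this nonzero coefficient, the equation is solved for $f_{x_n,x_n}$ as a real analytic function of $x_1,\dots,x_n$, of $f$, of its first derivatives, and of its second derivatives with at most one differentiation in $x_n$; that is, it is in Cauchy--Kovalevski normal form relative to the hypersurface $\{x_n=0\}$. The Cauchy--Kovalevski theorem (cf.\ \cite{T}) then yields, for each analytic Cauchy datum $f|_{x_n=0}$ and $f_{x_n}|_{x_n=0}$, a unique analytic solution $f$ near $o$. I would take the data to be $f|_{x_n=0}=\eta_0$ and $f_{x_n}|_{x_n=0}=\eta_1+1$ for a given $(\eta_0,\eta_1)\in C^\omega_2(\R^{n-1})\times C^\omega_1(\R^{n-1})$. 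The membership conditions $\eta_0\in C^\omega_2$ (so $\eta_0(0)=0$ and $\nabla\eta_0(0)=\mb 0$) and $\eta_1\in C^\omega_1$ (so $\eta_1(0)=0$) translate precisely into the normalization \eqref{eq:c} for the resulting $f$, whence $B_F(o)=1-\sum_{i=1}^n (f_{x_i}(o))^2=0$, so $o$ is a light-like point and $F\in\XX^{\omega}_{\alpha,\phi}(M,\hat o)$ with $\eta^\alpha(F)=(\eta_0,\eta_1)$. This construction gives surjectivity, and uniqueness of the solution gives injectivity, so $\eta^\alpha$ is bijective.

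Finally, for the last assertion I would differentiate $\psi:=f_{x_n}(x_1,\dots,x_{n-1},0)$ in the tangential directions, obtaining $\psi_{x_j}(0)=f_{x_n,x_j}(o)$ for $j=1,\dots,n-1$; thus $\nabla\psi$ at the origin equals $(f_{x_n,x_1}(o),\dots,f_{x_n,x_{n-1}}(o))$. By Lemma~\ref{lem:nablaB} this tuple vanishes if and only if $o$ is a degenerate light-like point, so $o$ is non-degenerate if and only if $\nabla\psi\ne\mb 0$ at the origin, completing the proof.
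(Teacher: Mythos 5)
Your proposal is correct and follows essentially the same route as the paper: both reduce the admissibility equation $A_F-\phi(B_F)^{\alpha+1}=0$ to Cauchy--Kovalevski normal form relative to $\{x_n=0\}$ by observing that the coefficient of $f_{x_n,x_n}$ equals $1$ at $o$ (thanks to the admissible coordinates and the normalization \eqref{eq:c}), solve with Cauchy data $(\eta_0,\eta_1+1)$, and deduce the degeneracy criterion from Lemma~\ref{lem:nablaB}. The only cosmetic difference is that the paper carries out the normal-form reduction via the implicit function theorem applied to $\tau$ and rewrites the equation as a first-order system in $(f,g)$, whereas you exploit the affine dependence on $f_{x_n,x_n}$ and divide directly, which is an equivalent step.
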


\begin{proof}
 Suppose that $F\in \XX^{\omega}_{\alpha,\phi}(M,\hat o)$. 
Firstly, we consider the case $M=\R^{n+1}_1$.
Since 
$A_F-\varphi B_F^{\alpha+1}$ vanishes identically,
 $f(=\iota_F)$ satisfies
(cf. \eqref{eq:BF1} and \eqref{eq:AF1})
 \begin{align*}
f_y&=g,\\
g_{y}&=\frac{-1}{1-Q_f}
\biggl(
(1-Q_f-g^2)^{1+\alpha}\phi-\sum_{k=1}^{n-1}
(1-Q_f+(f_{x_k})^2-g^2)f_{x_k,x_k} \\
&\phantom{aaaaaaaaaaaaaaaaaaaa}
-2\sum_{1\le j<k\le n-1}f_{x_j}f_{x_k}f_{x_j,x_k}-2 
g^{}\sum_{i=1}^n f_{x_i}g_{x_i}^{}\biggr ),
\end{align*}
where $Q_f:=\sum_{k=1}^{n-1} (f_{x_k})^2$.
(When $n=2$, $\phi=0$ and $M=\R^3_1$,
this system reduces to the case of $c=0$
as in \eqref{eq:Hc} in Section 5.)
This gives a normal form of a
system of partial differential equations. 
So we can apply the Cauchy-Kovalevski theorem (cf.\ \cite{T})
and get a solution $f$ satisfying the initial conditions
\begin{align}\label{eq:IA1}
f(x_1,\dots, x_{n-1},0)&=\eta_0(x_1,\dots, x_{n-1},0), \\
\label{eq:IA2}
f_{x_n}(x_1,\dots, x_{n-1},0)&=1+\eta_1(x_1,\dots, x_{n-1},0), 
\end{align}
where $(\eta_0,\eta_1)\in C_{2}^\omega(\R^{n-1})
\times C_{1}^\omega(\R^{n-1})$.
Then there exists a unique solution
$(f,g)$ of this system of partial differential 
equations satisfying the
initial conditions
\eqref{eq:IA1} and
\eqref{eq:IA2}. Obviously,  
$$
F:=(f(x_1,\dots,x_{n-1}),x_1,\dots,x_{n-1})
$$ 
gives the desired immersion in 
$\XX^\omega_{\alpha,\phi}(\R^{n+1}_1)$
satisfying $\eta^\alpha(F)=(\eta_0,\eta_1)$.

We next consider the case for general $M$.
We express $F$ as in
\eqref{eq:Ftof} under an admissible coordinate system 
$(t,x_1,\dots,x_n)$ satisfying \eqref{eq:c} at $\hat o$.
We set 
\[
  f_{i}:=f_{x_i},\quad 
   f_{i,j}:=D_{\partial_{x_j}}df(\partial_{x_i})
   \qquad (i,j=1,\dots,n),
\]
where $D$ is the Levi-Civita connection of $M$.
Moreover, we set
\begin{align}
\begin{aligned}\label{eq:multi}
\mb x&:=(x_1,\dots,x_n),\\
\mb f_1&:=(f_{1},f_{1,1},\dots,f_{1,n-1}),\\
\mb f_2&:=(f_{2},f_{2,2},\dots,f_{2,n-1}),\\
&\vdots \\
\mb f_{n-1}&:=(f_{n-1},\dots,f_{n-1,n-1}),\\
\mb f_n&:=(f_n,f_{1,n},\dots, f_{n,n-1}).
\end{aligned}
\end{align}
Then $A_F-(B_F)^{1+\alpha}\phi $ can be written as a function
of the variables $\mb x,f, \mb f_1,\dots,\mb f_n$,
and there exists a function $\tau$
of several variables such that
$$
A_F-(B_F)^{1+\alpha}\phi =\tau(\mb x ,f,\mb f_1,\dots,\mb f_{n},f_{n,n}).
$$
For example, if $M=\R^{n+1}_1$,
\begin{equation}\label{eq:alpha}
\tau=\left(1-\sum_{k=1}^n (f_k)^2\right)\sum_{i=1}^n f_{i,i}
+\sum_{i,j=1}^{n} f_i f_j f_{i,j}
-\left(1-\sum_{k=1}^n (f_k)^2\right)^{1+\alpha}\phi
\end{equation}
holds (cf. \eqref{eq:AF1}). 
Here, 
the coefficient of 
$f_{x_n,x_n}$ on the right-hand side of \eqref{eq:alpha}
is $-1$ at $o$.
Since $(t,x_1,\dots,x_n)$ is an admissible 
coordinate system of $M$ at $o$, we have
$$
\frac{\partial \tau}{\partial f_{n,n}}=1, 
$$
when
$t=x_i=f_i=f_{i,n}=f_{j,n}=f_{i,j}=0\,\, (i,j=1,\dots,n-1)$
and $f_{n,n}=1$.
So by the implicit function theorem,
there exists a $C^\omega$-function
$$
Y=Y(t,\mb x ,f,\mb f_1,\dots,\mb f_{n})
$$
defined on a small neighborhood of the origin
such that
$$
\tau\left(t,\mb x ,f,\mb f_1,\dots,\mb f_{n},
Y(t,\mb x ,f,\mb f_1,\dots,\mb f_{n})\right)=0.
$$
To find the desired $f$, we consider 
the following system of partial differential 
equations
\begin{align}\label{eq:pde0}
f_{x_n}&=g,\\
\nonumber
g_{x_n}^{}&=Y(t,\mb x ,f,\mb f_1,\dots,\mb f_{n-1},\mb g'),
\end{align}
where $\mb g':=(g,g_{x_1}^{},\dots,g_{x_{n-1}}^{})$.
We then apply the Cauchy-Kovalevski theorem
to this system of partial differential equations,
and get the unique solution $(f,g)$ of
\eqref{eq:pde0} satisfying the initial conditions 
\eqref{eq:IA1} and \eqref{eq:IA2}.
Obviously,  
$$
F:=(f(x_1,\dots,x_{n-1}),x_1,\dots,x_{n-1})
$$ 
is the desired immersion in 
$\XX^{\omega}_{\alpha,\phi}(M,\hat o)$.
The last assertion follows immediately from
Lemma \ref{lem:nablaB}.
\end{proof}

When $\alpha=1$ and $\phi=0$, 
we get the following:

\begin{corollary}\label{cor:Z00}
 The map
$$   \zeta:\Z^\omega(M,\hat o)\ni F \mapsto 
(\zeta_F:=)\biggl(f({\mb x_0},0),f_{x_n}({\mb x_0},0)-1\biggr)
    \in C^\omega_{2}(\R^{n-1})\times C^\omega_{1}(\R^{n-1})
$$ 
is bijective, where $f:=\iota_F$ and
${\mb x_0}=(x_1,\dots,x_{n-1})$.
Moreover, the base point 
$o$ is a non-degenerate light-like point of $F$  
if and only if 
\[
 \nabla\psi:=
  (\psi_{x_1},\dots,
   \psi_{x_{n-1}})\ne \mb 0
\]
at the origin of $\R^{n-1}$,
where $\psi:=f_{x_n}(x_1,\dots,x_{n-1},0)$.
\end{corollary}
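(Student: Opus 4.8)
The plan is to obtain this as the immediate specialization of Theorem \ref{thm:EA} to the parameters $\alpha=1$ and $\phi=0$; no new analytic input is needed.

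First I would identify the two classes. By the definition \eqref{eq:Y} of $\XX^{r}_{\alpha,\phi}(M,\hat o)$ together with the admissibility equation \eqref{eq:H2}, a germ $F\in\mc I^\omega_L(M,\hat o)$ lies in $\XX^{\omega}_{1,0}(M,\hat o)$ precisely when $A_F-0\cdot(B_F)^{2}=0$, i.e.\ when $A_F$ vanishes identically. Comparing this with the definition \eqref{eq:Z}, I would conclude
$$
\XX^{\omega}_{1,0}(M,\hat o)=\Z^\omega(M,\hat o).
$$

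Next I would match the two maps. Setting $\alpha=1$ and $\phi=0$ in Theorem \ref{thm:EA}, the map $\eta^{1}$ sends $F$ to $\bigl(f(x_1,\dots,x_{n-1},0),\,f_{x_n}(x_1,\dots,x_{n-1},0)-1\bigr)$ with $f:=\iota_F$; writing ${\mb x_0}=(x_1,\dots,x_{n-1})$ this is exactly the assignment $F\mapsto\zeta_F$ in the statement. Hence $\zeta=\eta^{1}$ under the identification above, so its bijectivity onto $C^\omega_{2}(\R^{n-1})\times C^\omega_{1}(\R^{n-1})$ is verbatim the bijectivity asserted in Theorem \ref{thm:EA}, and the characterization of the non-degenerate light-like point by $\nabla\psi\ne\mb 0$ at the origin (with $\psi:=f_{x_n}(x_1,\dots,x_{n-1},0)$) is the corresponding ``moreover'' clause of that theorem.

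Since the corollary is thus a pure restatement of the theorem, there is no genuine obstacle; the only step requiring any care is the set-theoretic identification $\XX^{\omega}_{1,0}(M,\hat o)=\Z^\omega(M,\hat o)$, which is read off directly from the definitions \eqref{eq:Y}, \eqref{eq:H2} and \eqref{eq:Z}.
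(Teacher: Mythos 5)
Your proposal is correct and coincides with the paper's own treatment: the paper introduces this corollary with the phrase ``When $\alpha=1$ and $\phi=0$, we get the following,'' i.e.\ it is obtained exactly as you do, by specializing Theorem~\ref{thm:EA} after noting that $(\phi,\alpha)$-admissibility with $\phi=0$ reduces \eqref{eq:H2} to $A_F=0$, which by \eqref{eq:Z} is the defining condition of $\Z^\omega(M,\hat o)$. The identification $\XX^{\omega}_{1,0}(M,\hat o)=\Z^\omega(M,\hat o)$ you single out is also the one the paper itself uses (cf.\ the proof of Corollary~\ref{cor:ZYX}), so no further argument is needed.
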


More generally, we prove the following:

\begin{corollary}\label{cor:ZYX}
There are infinitely many surfaces $F$
having degenerate light-like points
belonging to $\Y^\omega(M,\hat o)
=\XX^\omega_1(M,\hat o)$
$($resp. $\XX^\omega_0(M,\hat o))$ but
not contained in $\Z^\omega(M,\hat o)$
$($resp. $\Y^\omega(M,\hat o))$.
\end{corollary}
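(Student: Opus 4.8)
The plan is to read off all the required immersions directly from the bijection $\eta^\alpha$ of Theorem~\ref{thm:EA}, using the constant weight $\phi\equiv 1$. By the criterion at the end of that theorem (equivalently Lemma~\ref{lem:nablaB}), the base point $o$ is a \emph{degenerate} light-like point of $F$ precisely when $\nabla\psi=\mb 0$ at the origin, i.e. when $\eta_1(F)=f_{x_n}(x_1,\dots,x_{n-1},0)-1$ lies in $C^\omega_2(\R^{n-1})$ rather than merely in $C^\omega_1(\R^{n-1})$ (recall Definition~\ref{def:C1C2}). Hence, for either value of $\alpha$, the germs in $\XX^\omega_{\alpha,1}(M,\hat o)$ with a degenerate light-like point at $o$ correspond bijectively to pairs $(\eta_0,\eta_1)\in C^\omega_2(\R^{n-1})\times C^\omega_2(\R^{n-1})$, which is an infinite-dimensional family. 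Everything then reduces to checking the two membership claims and excluding the light-like germs.

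First I would treat $\Y^\omega=\XX^\omega_1$ against $\Z^\omega$. Taking $\alpha=1$ and $\phi\equiv 1$, every germ produced above satisfies $A_F=(B_F)^2$ and lies in $\XX^\omega_{1,1}(M,\hat o)\subset\XX^\omega_1(M,\hat o)=\Y^\omega(M,\hat o)$ by (C1). If such an $F$ is not light-like, then $B_F\not\equiv 0$, so by real analyticity $A_F=(B_F)^2\not\equiv 0$, and hence $F\notin\Z^\omega(M,\hat o)$ by \eqref{eq:Z}. So, modulo excluding light-like germs, this already gives the first family.

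For the second claim I would take $\alpha=0$ and again $\phi\equiv 1$, so each germ satisfies $A_F=B_F$ and lies in $\XX^\omega_{0,1}(M,\hat o)\subset\XX^\omega_0(M,\hat o)$. Suppose, toward a contradiction, that such a non-light-like $F$ also belonged to $\Y^\omega=\XX^\omega_1$, say $A_F=\psi(B_F)^2$ for some $\psi\in C^\omega_0(\R^n)$. Combining with $A_F=B_F$ and dividing by $B_F$ on the dense open set $U\setminus\Sigma_F$ gives $1=\psi B_F$, which persists as an identity of germs by analyticity; evaluating at $o$ yields $1=\psi(o)B_F(o)=0$, a contradiction since $B_F(o)=0$. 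Thus $F\notin\Y^\omega(M,\hat o)$, and again only the non-light-like condition remains to be secured.

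The one point requiring care — and the main obstacle — is that light-like germs ($B_F\equiv 0$) satisfy $A_F\equiv 0$ by Proposition~\ref{prop:B0}, hence are $(\phi,\alpha)$-admissible for \emph{every} $\phi,\alpha$ (both sides of \eqref{eq:H2} vanish) and lie in $\Z^\omega\subset\Y^\omega$; moreover, since $\nabla B_F\equiv 0$, their base point is automatically a degenerate light-like point, so they genuinely sit inside the degenerate family above. They must therefore be excluded. Here Theorem~\ref{thm:EB} is the key: the light-like germs are parametrized by $\lambda_F=\eta_0$ alone, so for each fixed $\eta_0$ there is exactly one value of $\eta_1$ in $C^\omega_2(\R^{n-1})$ — the one determined from $\eta_0$ by the Cauchy--Kovalevski solution of $B_F\equiv 0$ — that yields a light-like germ. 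Fixing $\eta_0$ (say $\eta_0=0$) and letting $\eta_1$ range over the infinitely many elements of $C^\omega_2(\R^{n-1})$ other than this single distinguished value then produces infinitely many non-light-like germs with a degenerate light-like point at $o$; by the arguments above these lie in $\Y^\omega\setminus\Z^\omega$ (resp. $\XX^\omega_0\setminus\Y^\omega$). Since $\eta^\alpha$ is injective, distinct choices of $\eta_1$ give distinct germs, so each family is infinite, as required.
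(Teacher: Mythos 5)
Your proof is correct and follows essentially the same route as the paper: both invoke the bijection $\eta^\alpha$ of Theorem~\ref{thm:EA} for a weight $\phi$ with $\phi(o)\neq 0$ (you take $\phi\equiv 1$), use the degeneracy criterion on $\eta_1$, and obtain $F\notin\Z^\omega(M,\hat o)$ (resp.\ $F\notin\Y^\omega(M,\hat o)$) by comparing the identities $A_F=\phi (B_F)^{1+\alpha}$ at $o$, where $B_F(o)=0$ forces a contradiction. Your explicit exclusion of the light-like germs via Theorem~\ref{thm:EB} and the injectivity of $\eta^\alpha$ addresses a point the paper's own proof leaves implicit (its argument also tacitly requires $B_F\not\equiv 0$), so your write-up is, if anything, more complete.
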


\begin{proof}
We take a function germ $\phi\in C_0^\omega(\R^n)$
satisfying $\phi(o)\ne 0$, where $o$ is the origin of
$\R^n$. Also, we let $\psi\in C^\omega_2(\R^{n-1}_1)$
(cf. Definition \ref{def:C1C2}).
Then $F_1:=(\eta^1)^{-1}(\psi)$ (resp. $F_0:=(\eta^0)^{-1}(\psi)$)
does not belong to $\Z^\omega(M,\hat o)$ 
(resp. $\XX^\omega_{1,\phi}(M,\hat o)$),
since $\Z^\omega(M,\hat o)=\XX^\omega_{1,\phi}(M,\hat o)$
with $\phi=0$ (resp. since 
$F\in \XX^\omega_{1,\phi}(M,\hat o)$
implies $F\in \XX^\omega_{0,\phi B_F}(M,\hat o)$).
\end{proof}

The following is a direct consequence of
the injectivity of $\eta^{\alpha}$
and Theorem \ref{thm:EB}.

\begin{corollary}
In the above correspondence, it holds that
 \[
   \Lambda^\omega(M,\hat o)
=\left\{F\in \Z^\omega(M,\hat o)\,;\,  
    \
\zeta_F=\left(\lambda,\sqrt{1-|\nabla \lambda|^2}-1\right),
     \,\, \lambda\in C^\omega_{1}(\R^{n-1})
    \right\},
 \]
where
$
|\nabla \lambda|^2:=(\lambda_{x_1})^2
+\cdots+(\lambda_{x_{n-1}})^2.
$
\end{corollary}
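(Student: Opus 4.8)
The plan is to realize $\Lambda^\omega(M,\hat o)$ as a slice inside $\Z^\omega(M,\hat o)$, cut out under the bijective parametrization $\zeta$ of Corollary \ref{cor:Z00} by a single functional relation between the two components of $\zeta_F$. Since $\zeta$ carries $\Z^\omega(M,\hat o)$ bijectively onto $C^\omega_2(\R^{n-1})\times C^\omega_1(\R^{n-1})$, it is enough to compute the image $\zeta\bigl(\Lambda^\omega(M,\hat o)\bigr)$ and recognize it as the graph on the right-hand side; injectivity of $\zeta$ then upgrades this image computation into the asserted set equality.

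First I would record that $\Lambda^\omega(M,\hat o)\subset \Z^\omega(M,\hat o)$: by Proposition \ref{prop:B0}, the identical vanishing of $B_F$ forces $A_F\equiv 0$, so every light-like germ is a zero mean curvature germ and $\zeta_F$ is defined on $\Lambda^\omega(M,\hat o)$. For such an $F$ with height function $f=\iota_F$, the first component of $\zeta_F$ is by definition $f(\mb x_0,0)$, which is exactly $\lambda_F$, the quantity parametrizing $\Lambda^\omega(M,\hat o)$ in Theorem \ref{thm:EB}; moreover Theorem \ref{thm:EB} tells us that $\lambda_F$ ranges bijectively over all of $C^\omega_2(\R^{n-1})$ as $F$ runs through $\Lambda^\omega(M,\hat o)$.

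The heart of the proof is the evaluation of the second component $f_{x_n}(\mb x_0,0)-1$. Here I would restrict the light-like condition $B_F\equiv 0$ to the initial slice $\{x_n=0\}$. Differentiating $\lambda_F(\mb x_0)=f(\mb x_0,0)$ gives $f_{x_i}(\mb x_0,0)=(\lambda_F)_{x_i}(\mb x_0)$ for $i=1,\dots,n-1$, so the tangential first derivatives of $f$ along the slice are precisely the derivatives of $\lambda_F$. In the flat model $M=\R^{n+1}_1$ the identity \eqref{eq:BF1} makes $B_F=0$ equivalent to $f_{x_n}=\sqrt{1-\sum_{i=1}^{n-1}(f_{x_i})^2}$, the normal form used in the proof of Theorem \ref{thm:EB}, and restricting to $x_n=0$ yields $f_{x_n}(\mb x_0,0)=\sqrt{1-|\nabla\lambda_F|^2}$. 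Hence $\zeta_F=\bigl(\lambda_F,\sqrt{1-|\nabla\lambda_F|^2}-1\bigr)$, and together with the surjectivity of $F\mapsto\lambda_F$ this gives $\zeta\bigl(\Lambda^\omega(M,\hat o)\bigr)=\bigl\{(\lambda,\sqrt{1-|\nabla\lambda|^2}-1)\,;\,\lambda\in C^\omega_2(\R^{n-1})\bigr\}$.

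Finally I would conclude by applying $\zeta^{-1}$: since $\zeta$ is injective on $\Z^\omega(M,\hat o)$ (Corollary \ref{cor:Z00}), we have $\Lambda^\omega(M,\hat o)=\zeta^{-1}\bigl(\zeta(\Lambda^\omega(M,\hat o))\bigr)$, which is exactly the right-hand set. Concretely, given $F\in\Z^\omega(M,\hat o)$ with $\zeta_F=(\lambda,\sqrt{1-|\nabla\lambda|^2}-1)$, Theorem \ref{thm:EB} produces a light-like $\tilde F\in\Lambda^\omega(M,\hat o)$ with $\lambda_{\tilde F}=\lambda$; the computation above gives $\zeta_{\tilde F}=\zeta_F$, and injectivity forces $F=\tilde F\in\Lambda^\omega(M,\hat o)$. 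The step I expect to require the most care is the exact evaluation of the second component for a general ambient $M$: the coefficients of $g$ entering $B_F$ are evaluated along the image of the slice $\{x_n=0\}$ and, by the admissible normalization \eqref{normal1}--\eqref{normal2}, agree with the Lorentz--Minkowski values only at $\hat o$. Thus the exact relation $\zeta_1(F)$ is governed by the implicit solution $X$ of $B_F=0$ constructed in the proof of Theorem \ref{thm:EB}, which specializes to $\sqrt{1-|\nabla\lambda|^2}$ precisely when $M=\R^{n+1}_1$; tracking this dependence, and in particular confirming that the radical form is the one relevant to the stated identity, is the genuine content behind the otherwise formal bijectivity argument.
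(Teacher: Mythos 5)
Your proposal is correct and takes exactly the paper's route: the paper's entire proof of this corollary is the single sentence preceding it, namely that it is ``a direct consequence of the injectivity of $\eta^{\alpha}$ and Theorem~\ref{thm:EB}'', and your three steps (the inclusion $\Lambda^\omega(M,\hat o)\subset\Z^\omega(M,\hat o)$ via Proposition~\ref{prop:B0}, the evaluation of $\zeta_F$ by restricting $B_F\equiv 0$ to the initial slice, and the transfer back through the injectivity of $\zeta$ from Corollary~\ref{cor:Z00}) are precisely what that sentence compresses.

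Since you flagged the evaluation of the second component for general $M$ as the delicate point, it is worth confirming that your suspicion is justified, and that the difficulty lies in the paper's statement rather than in your argument. Admissibility normalizes $g_{\alpha,\beta}$ and its first derivatives only at the single point $\hat o$ (cf.\ \eqref{normal1} and \eqref{normal2}), so along the image of the slice $\{x_n=0\}$ the metric coefficients need not be the Minkowski ones, and restricting $B_F=0$ to the slice yields $f_{x_n}(\mb x_0,0)=X\bigl(\lambda(\mb x_0),\mb x_0,0,\nabla\lambda(\mb x_0)\bigr)$, with $X$ the implicit function from the proof of Theorem~\ref{thm:EB}, rather than the explicit radical. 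This is a genuine discrepancy, not a presentational one: for the admissible metric $g=-dt^2+(1+t^2)\sum_{i=1}^{n}(dx_i)^2$ one finds $B_F=(1+f^2)^{n-1}\bigl(1+f^2-\sum_{i=1}^{n}(f_{x_i})^2\bigr)$, so every $F\in\Lambda^\omega(M,\hat o)$ satisfies $f_{x_n}(\mb x_0,0)=\sqrt{1+\lambda^2-|\nabla\lambda|^2}$ along the slice, which differs from $\sqrt{1-|\nabla\lambda|^2}$ as soon as $\lambda\not\equiv 0$. Hence the displayed identity is exact precisely for $M=\R^{n+1}_1$, where your proof is complete; for general $M$ the radical must be replaced by $X$ evaluated along the slice, and with that replacement your bijectivity argument goes through verbatim. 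The paper's one-line proof passes over this point silently.
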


\begin{example}
The graph of the entire function
$$
f_K(x_1,\dots,x_n) := (x_1+1) \tanh x_n
$$
on $\R^n$ given in Kobayashi \cite{K} 
gives a zero mean curvature 
hypersurface in $\R^{n+1}_1$ which 
changes type from space-like to time-like.
The points where $f_K$ changes type are non-degenerate
light-like points.
In this case,
$$
\zeta_F(x_1,\dots,x_{n-1})=(0,x_1),
$$
where $f_K=\iota_F$.
In particular, $F$ admits only non-degenerate light-like points. 
\end{example}

\begin{example}
The light-like plane given in Example \ref{ex:P}
and the light-cone given in Example \ref{ex:L}
have only degenerate light-like points. 
\end{example}

\section{Proof of Theorem \ref{thm:main}.}

Let $F:(U,o)\to (M,\hat o)$ be a $C^r$-immersion
belonging to $\mc I^r_L(M,\hat o)$ for $r\ge 4$.
We suppose that $o$ is a degenerate light-like point
and $F$ is written as in 
\eqref{eq:Ftof} and \eqref{eq:c}.
Then 
$$
\mb v:=dF(\partial_{x_n})\in T_{\hat o}M
$$
is a null vector, 
and there exists a light-like geodesic 
$\sigma:[-\sqrt{2}\epsilon,
\sqrt{2}\epsilon]
\to M$ for a sufficiently small $\epsilon(>0)$
such that $\sigma(0)=\hat o$ and $\sigma'(0)=\mb v$.
By Proposition A.1 in Appendix A, 
we can take the Fermi coordinate system
$(x_0,x_1,\dots,x_n)$ centered at $o$ defined on the
tubular neighborhood $U_\sigma$ of $\sigma$:
$$
|x_0|<\epsilon,\,\, |x_n|<\epsilon,
\quad |x_i|<\epsilon \qquad (i=1,\dots,n-1),
$$
where $\epsilon>0$ is a sufficiently small number.
This gives an admissible coordinate system (cf. Definition~\ref{eq:defA})
centered at $\hat o$ such that
$$
\sigma(t):=(t,0,\dots,0,t)\qquad (|t|<\epsilon).
$$
Moreover, 
the properties (a2) and (a3) in 
Proposition A.1 in Appendix A are satisfied.

The strategy of the proof 
of Theorem \ref{thm:main}
is essentially same as that of the case of $n=2$ and
$M=\R^3_1$ given in \cite{UY}.
However, since $M$ is
an arbitrarily given Lorentzian manifold,
it is difficult to obtain an explicit expression for $Y$
in \eqref{eq:pde0}, unlike as in the proof 
in \cite{UY} for $n=2$.
So we newly prepare several propositions for proving 
the theorem. We may assume that $F$ is written 
in the form
\begin{align}
\label{eq:G1}
F(x_1,\dots,x_n)&=(f(x_1,\dots,x_n),x_1,\dots,x_n),\\
\label{eq:G2}
f(x_1,\dots,x_n)&=
a(x_n)+\sum_{i=1}^{n-1} b_i(x_n)x_i+
\sum_{1\le j\le k\le n-1} c_{j,k}(x_1,\dots,x_n)x_jx_k,
\end{align}
where $a,b_i,c_{j,k}$ are a $C^{r}$-function,
$C^{r-1}$-functions and $C^{r-2}$-functions, respectively. 
Moreover, since $f$ satisfies
\eqref{eq:c}, we have
\begin{equation}\label{eq:a-ini}
a(0)=0,\quad a'(0)=1,\quad b_i(0)=0\qquad (i=1,\dots,n-1),
\end{equation}
where ${}':=d/d x_n$.

We assume that $o$ is a degenerate light-like point.
By Lemma \ref{lem:nablaB}, we have
\begin{equation}\label{eq:deg}
 f_{x_n,x_1}(o)=\cdots=f_{x_n,x_{n-1}}(o)=0,
\end{equation}
which is equivalent to the conditions
\begin{equation}\label{eq:b-ini}
b'_i(0)=0\qquad (i=1,\dots,n-1).
\end{equation}

Let $\alpha$ be a non-negative integer.
We next assume that $F\in \XX^{r}_{\alpha}(M,\hat o)$ ($r\ge 3$).
Then there exists a $C^{r-2}$-function $\phi$ such that
\begin{equation}\label{eq:AB0}
\tilde A_F=0\qquad
(\tilde A_F:=A_F-(B_F)^{1+\alpha} \phi).
\end{equation}
Without loss of generality, we may assume that
$\phi$ is defined on $U_\sigma$.
Differentiating \eqref{eq:AB0} by the parameter 
$x_i$ ($i=1,\dots,n-1$), we have
\begin{equation}\label{eq:AB1}
(\tilde A_F)_{x_i}=
(A_F)_{x_i}-(B_F)^{\alpha+1}\phi_{x_i}
-(\alpha+1)(B_F)_{x_i}(B_F)^{\alpha}\phi=0.
\end{equation}

\begin{proposition}\label{prop:ODE-con}
Suppose that
$F$ satisfies {\rm (A1)} or {\rm (A2)} in 
Theorem \ref{thm:main}.
Substituting $x_1=\cdots =x_{n-1}=0$,
the two equations
\begin{equation}\label{eq:A000}
\left.\tilde A_F\right|_{(x_1,\dots ,x_{n-1})=
\vect{0}}=0,
\quad
\left.(\tilde A_F)_{x_i}\right|_{(x_1,\dots ,x_{n-1})=\vect{0}}=0
\qquad (i=1,\dots,n-1)
\end{equation}
induce a normal form of a system of
ordinary differential
equations 
with unknown $n$-functions
$a(x_n)$ and $b_i(x_n)$ $(i=1,\dots,n-1)$.
Moreover, the this system of ordinary equation 
satisfies the local Lipschitz condition. 
Furthermore, $a(x_n)$ and $b_i(x_n)$
as in the expression
\eqref{eq:G2} are just the solutions
with initial conditions \eqref{eq:a-ini} and \eqref{eq:b-ini}
of the system of this ordinary differential equations.
\end{proposition}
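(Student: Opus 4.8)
The plan is to restrict the identity $\tilde A_F=0$ of \eqref{eq:AB0}, together with its $x_i$-derivatives \eqref{eq:AB1}, to the $x_n$-axis $\{x_1=\cdots=x_{n-1}=0\}$, and to read off from them a normal-form second-order system for the $n$ unknowns $a,b_1,\dots,b_{n-1}$. First I would record the jets of $f$ along the axis. Using \eqref{eq:G2} and the fact that every monomial not written there explicitly carries a factor $x_l$ with $l<n$, one gets on the axis
$$
f_{x_n,x_n}=a'',\quad f_{x_i,x_n}=b_i',\quad
f_{x_i,x_j}=H_{i,j}\ (i,j<n),\quad f_{x_n,x_n,x_i}=b_i'',
$$
where $H_{i,j}$ is the value on the axis of the Hessian built from the $c_{j,k}$, which I treat as a prescribed function of $x_n$. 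Since $\tilde A_F$ is of second order, $\tilde A_F$ restricted to the axis involves the $b_i$ only through $b_i'$ (no $b_i''$); and since $(\tilde A_F)_{x_i}$ is differentiated in $x_i$ alone, it involves neither $f_{x_n,x_n,x_n}$ (that is, $a'''$) nor $f_{x_n,x_n,x_j}$ with $j\neq i$ (that is, $b_j''$). Thus the only second-order quantities occurring on the axis are $a''$ and the individual $b_i''$.

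The next step is to isolate these leading terms. As $A_F$ is linear in the second derivatives of $f$, the coefficient of $f_{x_n,x_n}$ is the quantity computed as $\partial\tau/\partial f_{n,n}$ in the proof of Theorem \ref{thm:EA}, which equals $1$ at $o$; carrying out that computation in the Fermi coordinates of Proposition A.1 in Appendix A, so that the metric contributions along $\sigma$ are controlled by properties (a2) and (a3), shows that this coefficient remains nonzero on a neighborhood of $o$ along the axis. Hence the restriction of $\tilde A_F$ to the axis is affine in $a''$ with invertible leading coefficient and can be solved as $a''=G_0$. Differentiating once in $x_i$ makes the coefficient of $b_i''$ in $(\tilde A_F)_{x_i}$ restricted to the axis equal to that same nonvanishing coefficient, while the coefficient of $a''$ occupies a single off-diagonal column and vanishes at $o$; substituting the already-found $a''=G_0$ then yields $b_i''=G_i$. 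In other words, the coefficient matrix of $(a'',b_1'',\dots,b_{n-1}'')$ is triangular with diagonal entries close to $1$, hence invertible near $o$, which is precisely the normal-form assertion.

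For the Lipschitz claim, I would note that $G_0$ and the $G_i$ are rational in the phase variables $a,a',b_1,\dots,b_{n-1},b_1',\dots,b_{n-1}'$, with denominators the nonvanishing leading coefficients, so they are smooth in these variables near $o$; their remaining ingredients are the restrictions to the axis of the $c_{j,k}$, of $\phi$, of the metric coefficients, and of their first $x_i$-derivatives, which are continuous functions of $x_n$. Because $\alpha$ is a non-negative integer, $(B_F)^{1+\alpha}\phi$ involves no negative or fractional power of $B_F$, so differentiating it introduces no singular factor. Hence $G_0$ and the $G_i$ are continuous in $x_n$ and locally Lipschitz in the phase variables. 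The last assertion is then immediate: $\tilde A_F$ vanishes identically on $U_\sigma$, so its restriction and the restrictions of its $x_i$-derivatives to the axis vanish, whence the functions $a,b_i$ of \eqref{eq:G2} solve the system, with the initial data \eqref{eq:a-ini} and \eqref{eq:b-ini}.

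The step I expect to be the main obstacle is the passage to a general ambient $M$. Unlike the Lorentz--Minkowski case \eqref{eq:alpha}, there is no closed formula for $A_F$ and $B_F$, so one must check purely from the structure $A_F=\sum_{i,j}\tilde s^{i,j}\tilde h_{i,j}$, together with the admissibility and Fermi properties, both that only the claimed second-order quantities survive on the axis and that the leading coefficient stays nonzero along the whole segment $|x_n|<\epsilon$. Tracking, without explicit expressions, the metric-dependent lower-order terms—those entering $G_0$ and $G_i$ through $B_F$, through $\tilde\nu$, and through the Christoffel symbols—is the technical heart of the argument.
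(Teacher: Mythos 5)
Your proposal is correct and follows essentially the same route as the paper: the paper solves $\tilde A_F=0$ for $f_{x_n,x_n}$ via the implicit function theorem (the function $Y$ and the nonvanishing of $\partial\tau/\partial f_{n,n}$ from the proof of Theorem \ref{thm:EA}, guaranteed by the admissible/Fermi coordinates), differentiates in $x_i$, and restricts to the $x_n$-axis to obtain \eqref{eq:ode0}, which is exactly your differentiate--restrict--solve scheme in a slightly different order. Your explicit identification of the surviving jet quantities ($a''$, $b_i'$, $b_i''$) and of the triangular coefficient matrix is just an unpacking of why the paper's substitution of $a''=Y(\cdots)$ into $Y_i$ yields a normal form, and your Lipschitz argument matches the paper's count that $\tilde Y_i$ is $C^{r-3}$ with $r\ge 4$.
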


\begin{proof}
Since $f$ satisfies \eqref{eq:AB0}, we have
\begin{equation}\label{eq:pre0}
f_{x_n,x_n}=Y(t,\mb x ,f,\mb f_1,\dots,\mb f_{n}),
\end{equation}
where $Y$ is the function given
in the proof of Theorem \ref{thm:EA}.
Differentiating it by $x_i$, 
we get the following expression
\begin{equation}\label{eq:pre1}
f_{x_n,x_n,x_i}
=\tilde Y_i(t,\mb x ,f,\mb f_1,\dots,\mb f_{n},
(\mb f_1)_{x_i},\dots,(\mb f_{n})_{x_i}),
\end{equation}
where $\tilde Y_i$ is a $C^{r-3}$-function 
induced from $Y$ for each $i=1,\dots,{n-1}$.

For example, if $n=2$, \eqref{eq:pre0}
reduces to the relation
\begin{equation}\label{eq:pre02}
f_{y,y}=Y(t,x,y ,f,f_x,f_{xx},f_y,f_{yx}),
\end{equation}
where we set $x:=x_1,y=x_2$.
Then 
\begin{align*}
f_{y,y,x}&=
\frac{\partial Y}{\partial x}(t,x,y ,f,f_x,f_{xx},f_y,f_{yx})\\
&\phantom{aa}+f_{xx}
\frac{\partial Y}{\partial f_{1}}(t,x,y ,f,f_x,f_{xx},f_y,f_{yx})\\
&\phantom{aaaa}+f_{xxx}
\frac{\partial Y}{\partial f_{11}}(t,x,y ,f,f_x,f_{xx},f_y,f_{yx}) \\
&\phantom{aaaaaaa}+f_{yx}
\frac{\partial Y}{\partial f_{2}}(t,x,y ,f,f_x,f_{xx},f_y,f_{yx})\\
&\phantom{aaaaaaaaaa}+f_{yxx}
\frac{\partial Y}{\partial f_{21}}(t,x,y ,f,f_x,f_{xx},f_y,f_{yx}),
\end{align*}
where $f_1,f_{11},f_2,f_{21}$ 
are symbols as in \eqref{eq:multi}.

We return to the general case. We regard 
$c_{j,k}$ as fixed functions, 
and $a,\,\,b_i$ as unknown functions.
Substituting $x_1=\cdots=x_{n-1}=0$
into \eqref{eq:pre1}, 
we have a system of
ordinary differential equations
\begin{equation}
\label{eq:ode0}
\begin{cases}
a''&=Y(a,a',b_1,\dots, b_{n-1}),\\
b''_i&=Y_i(a,a',a'',b_1,\dots, b_{n-1},
b'_1,\dots, b'_{n-1}) \qquad (i=1,\dots,n-1)
\end{cases}
\end{equation}
of unknown functions $a(x_n)$ and $b_i(x_n)$,
where ${'}:=d/dx_n$, and each $Y_i$ ($i=1,\dots,n-1$)
is a $C^{r-3}$-function
of several variables induced by $\tilde Y_i$.
So if $r\ge 4$,
the system of ordinary differential equations
satisfies the local Lipschitz condition.
Here, \eqref{eq:ode0} is equivalent to 
\eqref{eq:A000}.

For example, if $n=2$, $\phi=0$ and $M=\R^3_1$,
\eqref{eq:ode0} reduces to \cite[(4.4) and (4.5)]{UY}.
\end{proof}

We next prove the following assertion:

\begin{proposition}\label{prop:B}
Suppose that $F_0=F_0(x_1,\dots,x_n)$ is written in the form
\begin{align}
\label{eq:F1}
F_0(x_1,\dots,x_n)&=(f_0(x_1,\dots,x_n),x_1,\dots,x_n),\\
\label{eq:F2}
f_0(x_1,\dots,x_n)&=x_n
+
\sum_{1\le j\le k\le n-1}
 c_{j,k}(x_1,\dots,x_n)x_jx_k,
\end{align}
where $\{c_{i,j}\}$ 
are $C^{r-2}$-functions defined at the origin $(r\ge 3)$.
Then 
\begin{align}
\label{eq:BG0a}
&B|_{(x_1,\dots,x_{n-1})=\vect{0}}=0, \\
\label{eq:BG0b}
& B_{x_i}|_{(x_1,\dots,x_{n-1})=\vect{0}}=0
\qquad (i=1,\dots,n-1)
\end{align}
hold, where $B:=B_{F_0}$.
Moreover, it holds
\begin{align}
\label{eq:AG0a}
& A|_{(x_1,\dots,x_{n-1})=\vect{0}}=0, \\
\label{eq:AG0b}
&A_{x_i}|_{(x_1,\dots,x_{n-1})=\vect{0}}=0
\qquad (i=1,\dots,n-1),
\end{align}
where $A:=A_{F_0}$.
\end{proposition}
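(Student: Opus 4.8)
The plan is to carry out the whole computation in the Fermi coordinate system along the null geodesic $\sigma$, in which (by properties (a2), (a3) of Proposition A.1) the metric takes its Minkowski values $g_{0,0}=-1,\ g_{0,j}=0,\ g_{i,j}=\delta_{i,j}$ and \emph{all} first derivatives of the $g_{\mu,\nu}$ vanish along $\sigma$. Under $F_0$ the $x_n$-axis $\{x_1=\dots=x_{n-1}=0\}$ maps onto $\sigma(x_n)=(x_n,0,\dots,0,x_n)$, whose tangent is the null vector $\mb v:=\partial_t+\partial_{x_n}$. First I would record three elementary facts about the height function \eqref{eq:F2}: on the axis one has $f_0=x_n$, $\nabla f_0=(0,\dots,0,1)$, and $f_{0,x_n,x_i}=0$ for $i=1,\dots,n-1$. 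The last holds because each term of the quadratic part $\sum c_{j,k}x_jx_k$ (with $j,k\le n-1$) still carries a factor $x_m$, $m\le n-1$, after a single differentiation, hence vanishes on the axis.

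For the statements on $B:=B_{F_0}$ I would evaluate the matrix $S_{F_0}=(s_{i,j})$ of \eqref{eq:BF}, using $s_{i,j}=f_{0,x_i}f_{0,x_j}g_{0,0}+f_{0,x_i}g_{0,j}+f_{0,x_j}g_{i,0}+g_{i,j}$. With the Minkowski values and $\nabla f_0=(0,\dots,0,1)$ this gives $S_{F_0}=\operatorname{diag}(1,\dots,1,0)$ on the axis, so $B=\det S_{F_0}=0$, proving \eqref{eq:BG0a}. For \eqref{eq:BG0b} I apply Jacobi's formula $B_{x_i}=\sum_{k,l}\tilde s^{k,l}(s_{k,l})_{x_i}$ with the cofactors $\tilde s^{k,l}$ of \eqref{eq:AF}; the cofactor matrix of $\operatorname{diag}(1,\dots,1,0)$ has only its $(n,n)$-entry nonzero, so on the axis $B_{x_i}=(s_{n,n})_{x_i}$. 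Since every $\partial_{x_i}(g_{\mu,\nu}\circ F_0)$ vanishes along $\sigma$, the only surviving contribution is $(s_{n,n})_{x_i}=2f_{0,x_n}f_{0,x_n,x_i}g_{0,0}=-2f_{0,x_n,x_i}$, which is $0$ by the first paragraph.

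Together these give $B=0$ and $\nabla B=\vect 0$ along the entire axis (the tangential component $B_{x_n}$ vanishes because $B$ is identically $0$ there), so every $\sigma(x_n)$ is a degenerate light-like point; hence \eqref{eq:AG0a}, $A:=A_{F_0}=0$ on the axis, follows at once from Proposition \ref{prop:B0}, or equivalently from \eqref{eq:AF1}, which is valid along $\sigma$ because the Fermi coordinates are admissible there. The main obstacle is \eqref{eq:AG0b}, the transverse derivatives $A_{x_i}$: here \eqref{eq:AF1} cannot simply be differentiated, since it holds only \emph{on} $\sigma$ and not on a neighborhood. I would therefore split $A_{F_0}=A^{(0)}+A^{(1)}$ according to the identity $\tilde h_{i,j}=g(F_{0,x_i,x_j},\tilde\nu)+g(\Gamma(F_{0,x_i},F_{0,x_j}),\tilde\nu)$ coming from \eqref{eq:AF}, so that $A^{(0)}:=\sum\tilde s^{i,j}g(F_{0,x_i,x_j},\tilde\nu)$ carries no Christoffel symbols while $A^{(1)}:=\sum\tilde s^{i,j}g(\Gamma(F_{0,x_i},F_{0,x_j}),\tilde\nu)$ is linear in $\Gamma$.

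The part $A^{(0)}$ depends on $g$ only through its values, so along $\sigma$, where $g$ is Minkowski and $\partial g=0$, both $A^{(0)}$ and its transverse derivatives agree with the flat expressions of \eqref{eq:BF1} and \eqref{eq:AF1}; the flat computation yields $A^{(0)}_{x_i}=-\tfrac12(B^0)_{x_n,x_i}=-\tfrac12\,\partial_{x_n}\!\big[(B^0)_{x_i}\big]=0$ on the axis, since $(B^0)_{x_i}$ vanishes identically there. For $A^{(1)}$, which vanishes on $\sigma$ because $\Gamma|_\sigma=0$, only the term in which $\partial_{x_i}$ falls on $\Gamma$ survives; using $\tilde s^{k,l}=\delta_{kn}\delta_{ln}$, $F_{0,x_n}=\mb v$ and $\tilde\nu=\kappa\,\mb v$ (for some scalar $\kappa$, since $\tilde\nu$ is null and orthogonal to the tangent space) it reduces on the axis to a multiple of $\big(\partial_{x_i}\Gamma_{\rho,\mu,\nu}\big)\mb v^\rho\mb v^\mu\mb v^\nu=\tfrac12\,\partial_{x_i}\partial_{\mb v}G$, where $G:=g(\mb v,\mb v)$. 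Because $\partial_{x_i}G$ vanishes identically along $\sigma$ (all first derivatives of $g$ do) and $\mb v=\sigma'$ is tangent to $\sigma$, the mixed derivative $\partial_{\mb v}\partial_{x_i}G$ vanishes on $\sigma$, giving $A^{(1)}_{x_i}=0$ and hence \eqref{eq:AG0b}. The crux is precisely this last cancellation: the uncontrolled second-derivative-of-metric terms are killed by the combination of the Fermi normalization ($\partial g\equiv 0$ on $\sigma$) with the null-geodesic structure ($G\equiv 0$ and $\Gamma(\mb v,\mb v)=0$ along $\sigma$).
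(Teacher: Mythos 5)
Your proposal is correct, and while the first three assertions \eqref{eq:BG0a}, \eqref{eq:BG0b}, \eqref{eq:AG0a} are handled exactly as in the paper (Fermi coordinates reduce everything along the axis to the flat formulas, since the metric takes Minkowski values and has vanishing first derivatives along $\sigma$), your treatment of the key assertion \eqref{eq:AG0b} follows a genuinely different route. The paper differentiates $A=\sum \tilde s^{j,k}\tilde h_{j,k}$ directly, reduces $A_{x_i}$ to $(\tilde h_{n,n})_{x_i}$ using the cofactor structure $\tilde s^{j,k}|_\sigma=\delta_{jn}\delta_{kn}$, and then kills this term by commuting covariant derivatives: the commutator produces a curvature term $g(R(\partial_{x_n},\partial_{x_i})(F_0)_{x_n},\tilde\nu)$, which vanishes by the skew-symmetry of $R$ combined with $\tilde\nu\propto (F_0)_{x_n}$, while the remaining term vanishes because $D_{\partial_{x_i}}(F_0)_{x_n}=\mb 0$ along the axis. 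You instead split $A=A^{(0)}+A^{(1)}$ into a Christoffel-free part and a part linear in $\Gamma$; the part $A^{(0)}$ is handled by the observation that it and its first transverse derivatives coincide on $\sigma$ with the flat expression of \eqref{eq:AF1} (a valid comparison, since $A^{(0)}$ is a polynomial in the pulled-back metric coefficients and derivatives of $f_0$, and the pulled-back metric agrees with $\eta$ to first order along $\sigma$), reducing to $-\tfrac12(B^0)_{x_n,x_i}=0$; the part $A^{(1)}$ is handled by the explicit first-kind-Christoffel identity $\Gamma_{\rho,\mu\nu}\mb v^\rho\mb v^\mu\mb v^\nu=\tfrac12\partial_{\mb v}g(\mb v,\mb v)$, whence $(\partial_{x_i}A^{(1)})|_\sigma$ is a multiple of $\partial_{\mb v}\partial_{x_i}g(\mb v,\mb v)$, which vanishes because $\partial_{x_i}g(\mb v,\mb v)\equiv 0$ along $\sigma$ and $\mb v$ is tangent to $\sigma$. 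Both proofs hinge on the same two geometric inputs ($\tilde s|_\sigma=\operatorname{diag}(0,\dots,0,1)$ and $\tilde\nu\propto\mb v$), but yours avoids the curvature tensor entirely and makes visible that the dangerous second-derivative-of-metric terms organize into a tangential derivative of a quantity vanishing along $\sigma$, at the cost of the somewhat delicate flat-comparison step; the paper's argument is more structural, attributing the cancellation to a symmetry of $R$, and avoids differentiating Christoffel symbols explicitly. In effect, the curvature term in the paper's proof is exactly where your $\partial\Gamma$ contribution hides, so the two arguments are dual descriptions of the same cancellation, each rigorous on its own.
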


\begin{proof}
In the expressions
\eqref{eq:G1} and
\eqref{eq:G2},
$F_0$ is the case that
\begin{equation}\label{eq:ab}
a(x_n)=x_n, \quad b_i(x_n)=0
\qquad (i=1,\dots,n-1).
\end{equation}
So we have
\begin{equation}\label{eq:f01}
f_0(0,\dots,0,x_n)=x_n,
\quad
(f_0)_{x_n}(0,\dots,0,x_n)=1,
\quad
(f_0)_{x_n,x_n}(0,\dots,0,x_n)=0
\end{equation}
and
\begin{equation}\label{eq:f02}
(f_0)_{x_i}(0,\dots,0,x_n)=(f_0)_{x_i,x_n}(0,\dots,0,x_n)=0
\qquad (i=1,\dots,n-1).
\end{equation}
Since all of the Christoffel symbols 
vanish along the curve $x_n\mapsto (x_n,0,\dots,0,x_n)$,
the formulas for $B,B_{x_i}$ and $A$ are 
completely the same as those in the 
case of $M=\R^{n+1}_1$,
so  \eqref{eq:f01} and \eqref{eq:f02} 
yield that
\begin{align*}
& B=1-(f_{x_1})^2-\cdots -(f_{x_n})^2=0, \\
&(B)_{x_i}=-2f_{x_n,x_i}=0 \qquad (i=1,\dots, n-1)
\end{align*}
along the $x_n$-axis, which proves
\eqref{eq:BG0a} and
\eqref{eq:BG0b},
where
$
(h)_{x_i}:={\partial h}/{\partial x_i}
$
($i=1,\dots, n$) for $h\in C^r_0(\R^n)$.
In particular, we have
$$
A=B \triangle f-\frac12\nabla B\star \nabla f=0
$$
along the $x_n$-axis, proving \eqref{eq:AG0a}.

From now on, we prove \eqref{eq:AG0b}:
Since $A=0$ along the $x_n$-axis,
we have $A_{x_n}=0$ along the $x_n$-axis. 
Thus, to prove the assertion, it is sufficient to
prove \eqref{eq:AG0b} for $i=1,\dots, n-1$.
By \eqref{eq:AF}, we have
$$
A_{x_i}=
\sum_{j,k=1}^n (\tilde s^{j,k})_{x_i}\tilde h_{j,k}
+\sum_{j,k=1}^n \tilde s^{j,k}(\tilde h_{j,k})_{x_i}
\qquad (i=1,\dots, n-1).
$$
Since all of the Christoffel symbols 
vanish along $(x_n,0,\dots,0,x_n)$,
$\tilde s^{i,j}$ 
and $\tilde h_{i,j}$ 
have the same expression as the case of $\R^{n+1}_1$
along the curve $\sigma$, and
we have (cf. \eqref{eq:b3} in Appendix B)
$$
(\tilde s^{j,k})_{x_i}=f_{x_i,x_k}f_{x_j}+f_{x_i,x_j}f_{x_k} 
=
\begin{cases}
f_{x_i,x_k} & \mbox{$j= n$}, \\
f_{x_i,x_j} & \mbox{$k= n$}, \\ 
0 & \mbox{otherwise}. 
\end{cases}
$$
On the other hand, 
\eqref{eq:f01} and \eqref{eq:f02} 
imply that $f_{x_i,x_n}=0$
for $i=1,\dots,n$ at $o$, with respect to the
Lorentzian metric $g$.
By \eqref{eq:b5} we have
$$
\tilde h_{n,k}=\tilde h_{j,n}=0 \qquad (j,k=1,\dots,n-1)
$$
along $\sigma$.
In particular, we have 
$$
\sum_{j,k=1}^n (\tilde s^{j,k})_{x_i}\tilde h_{j,k}=0
\qquad (i=1,\dots,n-1)
$$
along $\sigma$.
So it is sufficient to show that
\begin{equation}\label{eq:red2}
\sum_{j,k=1}^n \tilde s^{i,j}(\tilde h_{j,k})_{x_i}=0
\end{equation}
for $i=1,\dots,n-1$.
Since $B=0$ along the $x_n$-axis, 
$\tilde s^{i,j}$ vanishes
for $(i,j)\ne (n,n)$ (see (B.4) of Appendix B).
Moreover, the fact that $\tilde s^{n,n}=1$ along the $x_{n}$-axis
yields that the left-hand side of \eqref{eq:red2} is
equal to $(\tilde h_{n,n})_{x_i}$.
Then we have
$$
(\tilde h_{n,n})_{x_i}
=g(D_{\partial_{x_n}}(F_0)_{x_n},\tilde \nu)_{x_i} 
=
g(D_{\partial_{x_i}}D_{\partial_{x_n}}(F_0)_{x_n},\tilde \nu)
+
g(D_{\partial_{x_n}}(F_0)_{x_n},D_{\partial_{x_i}}\tilde \nu)
$$
for $i=1,\dots,n-1$.
Since
$$
D_{\partial_{x_n}}(F_0)_{x_n}=((f_0)_{x_n,x_n},0,\dots,0)
=\mb 0
$$
along the $x_n$-axis,
we have
\begin{align*}
(\tilde h_{n,n})_{x_i}
&=
g(D_{\partial_{x_i}}D_{\partial_{x_n}}(F_0)_{x_n},\tilde \nu)\\
&=
g(D_{\partial_{x_n}}D_{\partial_{x_i}}(F_0)_{x_n},\tilde \nu)+
g(R(\partial_{x_n},\partial_{x_i})(F_0)_{x_n},\tilde \nu),
\end{align*}
where $R$ is the curvature tensor of the connection $D$.
On the other hand, it can be easily checked that
$(F_0)_{x_n}$ is perpendicular to
$(F_0)_{x_i}$ ($i=1,\dots,n$).
Since vectors perpendicular to
$(F_0)_{x_i}$ ($i=1,\dots,n$) 
form a $1$-dimensional vector space at
each point of $F_0(x_n,0,\cdots,0,x_n)$, 
they are proportional to $\tilde \nu$ on the
$x_n$-axis.  So the second term of the
right-hand side vanishes because of symmetry of
the curvature tensor, and
$$
(\tilde h_{n,n})_{x_i}
=g(D_{\partial_{x_n}}D_{\partial_{x_i}}(F_0)_{x_n},\tilde \nu)
$$
holds for $i=1,\dots,n-1$.
Then $(\tilde h_{n,n})_{x_i}=0$
($i=1,\dots,n-1$),
because
$$
D_{\partial_{x_i}}(F_0)_{x_n}
=((f_0)_{x_i,x_n},0,\dots,0)=\mb 0
$$
along the $x_n$-axis, proving the assertion.
\end{proof}

\begin{proof}[Proof of Theorem A]
We set $\alpha=0$. 
When $M=\R^3_1$, the proof given in \cite{UY}
can apply for the case of $\alpha=0$.
Here we consider the case for general Lorentzian
manifold $M$ of dimension $n+1(\ge 3)$.
By Proposition \ref{prop:B},
$F_0$ satisfies $\tilde A_{F_0}=0$
along the $x_n$-axis,
where $\tilde A_{F_0}:=A_{F_0}-B_{F_0}\phi$.
Then we have $(\tilde A_{F_0})_{x_n}=0$
along the $x_n$-axis.
Thus
$$
\tilde A_{F_0}:=0,\qquad (\tilde A_{F_0})_{x_n}=0
$$
holds (cf. \eqref{eq:A000}).
By applying 
Proposition \ref{prop:ODE-con} for $F_0$,
$$
a_0(t)=t,\qquad b_{0,i}(t)=0\qquad (i=1,\dots,n-1)
$$
give the solution of the system of ordinary equations
\eqref{eq:ode0} with the initial condition
\begin{equation}\label{eq:ini1}
a(0)=0, \quad a'(0)=1,
\quad b_i(0)=0 \qquad (i=1,\dots,n-1), 
\end{equation}
and
\begin{equation}\label{eq:ini2}
b'_i(0)=0 \qquad (i=1,\dots,n-1). 
\end{equation}
The condition \eqref{eq:ini1}
follows if $o$ is a light-like point
(cf. \eqref{eq:c}), and the condition \eqref{eq:ini2}
follows if $o$ is a degenerate light-like point
(cf. Lemma \ref{lem:nablaB}).
Hence by applying Proposition \ref{prop:ODE-con} to $F$,
the same system of ordinary equations
\eqref{eq:ode0} is induced.
Since $r\ge 4$, this system of ordinary equations
satisfies the local Lipschitz condition.
Then the uniqueness of the solution implies that 
$a(t)=t$ and $b(t)=0$, that is, $F$ contains
a light-like geodesic $\sigma$
consisting of degenerate light-like points.
\end{proof}

\begin{remark}\label{new}
In the previous work \cite{UY}, the authors stated that
the assertion of Theorem A when $F$ is in
the class $\XX^3(\R^3_1)$. However,
the proof given in \cite{UY} is a special case
of the above proof, and so to prove 
\cite[Theorem 4.2]{UY}, 
we need to assume $F$ is $C^4$-differentiable.   
\end{remark}

As a consequence of Theorem \ref{thm:main}, 
we immediately get the following:

\begin{corollary}\label{cor:H}
Suppose that $F:U\to M$ 
is a $C^r$-immersion $(r\ge 3)$
whose mean curvature vector field
$\mb H_F$ extends 
as a $C^{1}$-vector field on $U$.
If $o\in U$ is a degenerate light-like point,
then $F(U)$ contains a light-like geodesic 
segment in $M$ passing through $\hat o(=F(o))$
consisting of only degenerate light-like points.
\end{corollary}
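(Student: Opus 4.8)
The plan is to reduce the statement to the proof of Theorem~\ref{thm:main}, carried out with the exponent $\alpha=1$ in place of $\alpha=0$. First I would note that the hypothesis on $\vect{H}_F$ places $F$ in $\XX^{r}_{1}(M,\hat o)$: the relevant function is $\phi:=A_F/(B_F)^2$, so that $A_F-\phi(B_F)^2=0$ holds identically, and since $\vect{H}_F=(\phi/n)\,\tilde\nu$ with $\tilde\nu$ (cf.\ \eqref{eq:nu0}) a nowhere-vanishing field of class $C^{r-1}$, the extension of $\vect{H}_F$ as a $C^1$-field forces $\phi$ to be of class $C^1$ (this is the computation behind Proposition~\ref{prop:3}). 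As $o$ is a degenerate light-like point, I pass to the Fermi coordinate system along the light-like geodesic $\sigma$ issuing from $\hat o$ in the direction $\vect{v}:=dF(\partial_{x_n})$ and write $F$ in the form \eqref{eq:G1}--\eqref{eq:G2}, with the initial conditions \eqref{eq:a-ini} and \eqref{eq:b-ini}.

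Next I would run the proof of Theorem~\ref{thm:main} with $\alpha=1$. Proposition~\ref{prop:B} is independent of $\alpha$, so the reference immersion $F_0$ (given by $a(x_n)=x_n$ and $b_i\equiv0$) still satisfies $A_{F_0}=B_{F_0}=0$ and $(A_{F_0})_{x_i}=(B_{F_0})_{x_i}=0$ along the $x_n$-axis; hence $\tilde A_{F_0}:=A_{F_0}-(B_{F_0})^2\phi$ together with $(\tilde A_{F_0})_{x_i}$ and $(\tilde A_{F_0})_{x_n}$ all vanish there. By Proposition~\ref{prop:ODE-con}, which is stated for an arbitrary non-negative integer $\alpha$, this shows that $(a_0,b_{0,i})=(x_n,0)$ solves the system of ordinary differential equations \eqref{eq:ode0} under the initial conditions \eqref{eq:ini1}--\eqref{eq:ini2}. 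The functions $a,b_i$ attached to the given $F$ solve the same system with the same initial data, the condition $b_i'(0)=0$ being precisely the degeneracy of $o$ recorded in Lemma~\ref{lem:nablaB}. Thus, once uniqueness for \eqref{eq:ode0} is available, I obtain $a(x_n)=x_n$ and $b_i\equiv0$; that is, $F(U)$ contains $\sigma$ and every point of $\sigma$ is a degenerate light-like point.

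The step not already covered by Theorem~\ref{thm:main}, and where I expect the real difficulty, is the local uniqueness for \eqref{eq:ode0} under the weaker regularity here: $\phi$ is only $C^1$, so the crude count of Proposition~\ref{prop:ODE-con} (giving $Y_i\in C^{r-3}$, hence demanding $r\ge4$) does not apply. The feature that rescues the case $r\ge3$ is the additional factor of $B_F$ produced by $\alpha=1$. The only datum of low regularity is $\phi_{x_i}$, and by \eqref{eq:AB1} (with $\alpha=1$) it enters the differentiated equation exclusively through the combination $(B_F)^2\phi_{x_i}$. Solving for $f_{x_n,x_n,x_i}$ by the implicit function theorem and restricting to the $x_n$-axis, the right-hand side $Y_i$ of \eqref{eq:ode0} is real analytic in the unknowns $a,a',b_1,\dots,b_{n-1},b_1',\dots,b_{n-1}'$, while the only coefficients of class merely $C^0$ --- namely $\phi_{x_i}(0,\dots,0,x_n)$ and the first derivatives of the $c_{j,k}$, which are $C^{r-3}\subset C^0$ for $r\ge3$ --- depend on the independent variable $x_n$ alone. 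Hence \eqref{eq:ode0} is locally Lipschitz in the unknowns with continuous dependence on $x_n$, and the Picard--Lindel\"of (Carath\'eodory) theorem supplies the uniqueness needed to finish as above.
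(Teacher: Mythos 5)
Your proof is correct, but it does not follow the paper's route---and it is in fact more careful than the paper's own argument. The paper's proof is two lines: the $C^1$-extension of $\vect{H}_F$ places $F$ in $\XX^{3}_{1}(M,\hat o)$, the inclusion $\XX^{3}_{1}(M,\hat o)\subset \XX^{3}_{0}(M,\hat o)$ of Proposition~\ref{prop:000} is invoked, and Theorem~\ref{thm:main} is then cited. Strictly speaking that citation has a regularity mismatch, since Theorem~\ref{thm:main} is stated for $\XX^{4}_{0}(M,\hat o)$: its proof needs $r\ge4$ so that the right-hand sides $Y_i$ of \eqref{eq:ode0}, being $C^{r-3}$, satisfy a local Lipschitz condition, whereas here $F$ is only $C^3$ and $\phi$ only $C^1$. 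You identify exactly this point and repair it: you re-run the proof of Theorem~\ref{thm:main} at $\alpha=1$ and observe that uniqueness for \eqref{eq:ode0} requires only Lipschitz dependence on the unknowns $(a,a',b_i,b_i')$ together with continuity in $x_n$, and that after restriction to the $x_n$-axis all the low-regularity data ($\phi$, $\phi_{x_i}$, the $c_{j,k}$ and their first derivatives) enter as coefficient functions of $x_n$ alone, never composed with the unknowns; only the metric is so composed, and it is regular enough. This buys a proof of the corollary as actually stated ($r\ge3$), which the paper's two-line citation does not literally provide. One quibble: your diagnosis that the extra factor of $B_F$ produced by $\alpha=1$ is what rescues $r\ge3$ is off the mark. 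For $\alpha=0$ the corresponding term is $B_F\phi_{x_i}$ rather than $(B_F)^2\phi_{x_i}$, and your separation-of-variables argument applies verbatim: what matters is that $\phi_{x_i}$ restricted to the axis is a continuous function of $x_n$ multiplying an expression that is regular in the unknowns, not the power of $B_F$ in front of it. Your observation therefore actually shows that Theorem~\ref{thm:main} itself extends to $\XX^{3}_{0}(M,\hat o)$, which is the cleanest way to justify the $r\ge3$ hypothesis of this corollary.
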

\begin{proof}
If $\mb H_F$ is $C^1$-differentiable, then 
$F$ belongs to the class $\XX^{3}_{1}(M,\hat o)$. Since 
$\XX^{3}_1(M,\hat o)\subset \XX^{3}_0(M,\hat o)$,
the assertion follows immediately  from Theorem A.
\end{proof}

\begin{example}\label{ex:X1}
We set
 \[
    F_1(x,y):=\bigl(f_1(x,y),x,y\bigr),\qquad
    f_1(x,y):= y+ x^2 + x^3 + yx^4.
 \]
 Then
 \begin{align*}
    A_{F_1}&=2x^4(6+6x+4yx^2 +7x^4 + 9x^5+10y x^6 y),\\
    B_{F_1}&=-x^2(4+12x + (11+16y)x^2+24y x^3 + 16y^2 x^4 + x^6).
 \end{align*}
 So $F_1\in \XX^\omega_{1,\phi_1}(\R^3_1)$
 and $(0,0)$ is a degenerate light-like point.
 The mean curvature function $H_{F_1}$ is bounded on 
 a neighborhood of $(0,0)$ but not real analytic.
But the another mean curvature $\hat H_F$ (cf.
\eqref{eq:HF2}) is real analytic.
\end{example}

\begin{example}\label{ex:X2}
We next set
 \[
    F_2(x,y):=\bigl(f_2(x,y),x,y\bigr),\qquad
    f_2(x,y):= y-(1+y)x^3 -y^3 x^4.
 \]
 Then
 \begin{align*}
    A_{F_2}=6x^4(1+h_1(x,y) x),\qquad
    B_{F_2}=x^3(2 + h_2(x,y) x),
 \end{align*}
 where $h_1(x,y)$ and $h_2(x,y)$ are polynomials in $x,y$.
 So 
$F_2\in \XX^\omega_{0,\phi_2}(\R^3_1)$
 and $(0,0)$ is a degenerate light-like point.
 The mean curvature function $H_{F_2}$ is unbounded at $(0,0)$.
\end{example}

As shown in Corollary \ref{cor:ZYX},
there are many surfaces with 
degenerate light-like points
such that their mean curvature vector field
$\mb H_F$ is real analytic on $U$.

\begin{corollary}\label{cor:isolated}
Suppose that $F:U\to M$ 
is an immersion satisfying
$F\in \XX^{3}_0(M,\hat o)$.
Then any light-like points of $F$
are not isolated.
\end{corollary}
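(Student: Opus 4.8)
The plan is to prove Corollary~\ref{cor:isolated} by a dichotomy on the nature of the light-like point. Let $p \in \Sigma_F$ be any light-like point of $F$. I first observe that $\Sigma_F$ is precisely the zero set of the continuous function $B_F$, and that being a light-like point, degenerate or not, is a coordinate-free notion (so I may freely choose an admissible coordinate system centered at $\hat{p} := F(p)$ and normalize as in \eqref{eq:c}). The key distinction is whether the exterior derivative of $B_F$ vanishes at $p$.

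In the degenerate case, where $dB_F(p) = \vect{0}$, the point $p$ is by definition a degenerate light-like point, and since $F \in \XX^{3}_0(M,\hat o)$ already gives $F \in \XX^{3}_\alpha(M,\hat{p})$ with $\alpha = 0$, I can invoke Theorem~\ref{thm:main} directly. (Strictly, Theorem~\ref{thm:main} is stated for the class $\XX^4_0$; but Corollary~\ref{cor:H} and the surrounding discussion show the relevant conclusion holds once the defining ODE system in Proposition~\ref{prop:ODE-con} satisfies the local Lipschitz condition, which requires $r \ge 4$. So the first thing I must check is whether $r \ge 3$ truly suffices here, or whether the statement implicitly needs the $C^4$ hypothesis as flagged in Remark~\ref{new}.) Granting the line theorem, $F(\Sigma_F)$ contains a whole light-like geodesic segment through $\hat{p}$ consisting of degenerate light-like points, so the preimage of that segment under $F$ is a nondegenerate arc of $\Sigma_F$ accumulating at $p$. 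Hence $p$ is not isolated.

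In the non-degenerate case, where $dB_F(p) \ne \vect{0}$, the argument is elementary and does not need the heavy machinery. Here $B_F$ is a $C^{1}$-function with $B_F(p) = 0$ and nonvanishing gradient at $p$, so by the implicit function theorem the zero set $\{B_F = 0\}$ is, near $p$, a $C^{1}$-hypersurface of $U$ (of dimension $n-1 \ge 1$). In particular $\Sigma_F \cap V = \{B_F = 0\} \cap V$ for a small neighborhood $V$ of $p$, and this is a nonempty manifold of positive dimension through $p$; thus $p$ is again an accumulation point of $\Sigma_F$ and is not isolated.

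Combining the two cases, every light-like point of $F$ fails to be isolated, which is the assertion. I expect the main obstacle to be purely a matter of bookkeeping on the differentiability hypothesis: Theorem~\ref{thm:main} as invoked relies on the Lipschitz condition for the ODE system, which the text secures only at $C^4$, yet Corollary~\ref{cor:isolated} is stated at $C^3$. The honest fix is to note that the \emph{non-degenerate} case needs only the implicit function theorem (valid at $C^3$, indeed $C^1$), while the \emph{degenerate} case is where one must either appeal to the $C^4$ version or verify that the relevant part of the argument survives at $C^3$; clarifying exactly which regularity the degenerate branch consumes is the one subtle point, and everything else is routine.
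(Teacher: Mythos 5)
Your proof is correct and is essentially the paper's own (unwritten) argument: the corollary is stated without proof as an immediate consequence of Theorem~\ref{thm:main} for the degenerate case, combined with the observation the paper makes immediately afterwards that at a non-degenerate light-like point $dB_F\ne 0$, so $B_F$ changes sign and its zero set is locally a hypersurface through the point — exactly your dichotomy. The regularity mismatch you flag is genuine, but it is a defect of the paper's statement rather than of your argument: the corollary is asserted for $\XX^{3}_0(M,\hat o)$, while the only tool available for the degenerate branch, Theorem~\ref{thm:main}, is stated for $\XX^{4}_0(M,\hat o)$ and its proof (uniqueness for the ODE system of Proposition~\ref{prop:ODE-con}, whose right-hand sides are only $C^{r-3}$) is secured in the paper only for $r\ge 4$, as the authors themselves stress in Remark~\ref{new}; so your proof establishes the corollary exactly to the extent the paper's own machinery does, and correctly isolates the one step where the stated $C^3$ hypothesis is not covered by that machinery.
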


If the light-like point $o$ is non-degenerate, 
then $B_F$ changes sign, that  is, $F$ changes 
its causal type (i.e. $U_+,U_-$ are both
non-empty). So the following corollary is 
also obtained.

\begin{corollary}
Suppose that
$F\in \XX^{r}_0(M,\hat o)$ $(r\ge 4)$.
If $o$ is a light-like point at which
$U_+$ and $U_-$ do not simultaneously accumulate to 
$o$, 
then $F(U)$ contains a segment of
a light-like geodesic passing through $\hat o(=F(o))$.
\end{corollary}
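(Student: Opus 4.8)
The plan is to reduce the statement to Theorem~\ref{thm:main} by arguing that the hypothesis forces the light-like point $o$ to be \emph{degenerate}. Since $o\in\Sigma_F$ we have $B_F(o)=0$, and by definition $o$ is either a degenerate light-like point (the exterior derivative of $B_F$ vanishes at $o$) or a non-degenerate one. I would first dispose of the non-degenerate case and then invoke Theorem~\ref{thm:main} in the remaining degenerate case.

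First I would rule out non-degeneracy. Suppose, to the contrary, that $o$ is non-degenerate, so that the exterior derivative of $B_F$ does not vanish at $o$. Note that $B_F=\det(S_F)$ is as differentiable as the entries $s_{i,j}^{}$ of the first fundamental form, hence $C^{r-1}$; since $r\ge 4$ it is at least $C^{1}$. Because $B_F(o)=0$ while $dB_F(o)\ne\mb 0$, the implicit function theorem shows that $\{B_F=0\}$ is a smooth hypersurface through $o$ and that $B_F$ is strictly positive on one side of it and strictly negative on the other, in every neighborhood of $o$. Thus $U_+=\{B_F>0\}$ and $U_-=\{B_F<0\}$ both accumulate to $o$ simultaneously, contradicting the hypothesis. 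Therefore $o$ must be a degenerate light-like point.

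It then remains to apply the main theorem. Since $F$ is $C^r$ with $r\ge 4$, it belongs to $\XX^{4}_0(M,\hat o)$, and $o$ is a degenerate light-like point; hence Theorem~\ref{thm:main} yields a light-like geodesic segment of $M$ through $\hat o=F(o)$ contained in $F(\Sigma_F)\subset F(U)$, which is exactly the desired conclusion.

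I do not expect a serious obstacle here: the whole argument is the elementary observation that $B_F$ changes sign across a non-degenerate zero---precisely the remark preceding the statement---combined with the already-established Theorem~\ref{thm:main}. The only point needing a line of care is regularity, namely that $B_F$ is $C^1$ near $o$ so that the implicit function theorem and the sign-change conclusion are legitimate; this is guaranteed by $r\ge 4$.
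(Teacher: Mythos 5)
Your proposal is correct and follows the same route as the paper: the paper's justification is exactly the remark preceding the corollary, namely that a non-degenerate light-like point forces $B_F$ to change sign, so $U_+$ and $U_-$ both accumulate at $o$, whence the hypothesis forces $o$ to be degenerate and Theorem~A applies. Your implicit-function-theorem elaboration of the sign-change step and the regularity check ($B_F\in C^{r-1}$, $r\ge 4$) are just more explicit versions of what the paper leaves implicit.
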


Finally, we give here a proof of Corollary B 
stated in the introduction:

\begin{proof}[Proof of Corollary B]
Let $F:U\to M$ be a $C^4$-differentiable light-like immersion.
By Theorem~A, at each point $p\in U$,
there exists a light-like geodesic segment
$L$ passing through $F(p)$
such that $L\subset F(\Sigma_F)$ (cf. (0.5)). 
Since the Lorentzian metric of $M$
is of index $1$,
the induced metric on
$U$ by $F$ is of rank $n-1$.
So, the tangential direction of $L$ just corresponds
to the null direction at $p$.
Thus, we get the assertion.
\end{proof}

\section{Properties of curves consisting of 
degenerate light-like points}

We firstly prepare the following: 

\begin{proposition}\label{prop:weak-version}
Let $F:U \mapsto M$ be a $C^r$-differentiable $(r\ge 2)$
immersion of a domain $U\subset \R^n$
into  a Lorentzian $(n+1)$-manifold $M$.
Suppose that $\gamma:I\to U$ is a null curve
$($cf. \eqref{eq:null}$)$ satisfying
$B_F=0$ and
$\nabla B_F=\mb 0$
along the curve $\gamma$.
Then 
\begin{equation}\label{eq:acc}
\hat\gamma''(t):=D_{\gamma'(t)}\hat\gamma'(t)
\end{equation}
 is proportional to
 $\hat\gamma'(t)$,
where $\hat \gamma:=F\circ \gamma$, and
 $D$ is the Levi-Civita connection associated with
 $g$. In particular, if 
 $\gamma(I) (\subset U)$
 consists of degenerate light-like points,
 then $\hat\gamma(t)$ is a light-like geodesic in 
$M$,
by a suitable change of the parameter $t$.
\end{proposition}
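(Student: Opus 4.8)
The plan is to show that the acceleration $\hat\gamma''=D_{\hat\gamma'}\hat\gamma'$ is $g$-orthogonal to the whole tangent hyperplane of $F$ along $\gamma$. Since $g$ is non-degenerate, the $g$-orthogonal complement of an immersed hypersurface is one-dimensional and is spanned by $\tilde\nu$; hence this orthogonality forces $\hat\gamma''$ to be proportional to $\tilde\nu$, and the proof is finished once we note that $\tilde\nu$ is proportional to $\hat\gamma'$ along $\gamma$. To see this last point, recall that at a light-like point the tangent hyperplane is degenerate, so the induced form $ds^2$ is positive semi-definite of rank $n-1$ with a one-dimensional radical. As $\gamma$ is null, $\gamma'(t)$ must lie in this radical, so $\hat\gamma'(t)=dF(\gamma'(t))$ is orthogonal to every $F_{u_k}$; thus $\hat\gamma'$ spans the (one-dimensional) normal line and is proportional to $\tilde\nu$.

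First I would compute the tangential components. Writing $\gamma'=\sum_i\dot\gamma^i\,\partial_{u_i}$, so that $\hat\gamma'=\sum_i\dot\gamma^i F_{u_i}$, the Koszul formula (using $[F_{u_i},F_{u_j}]=\mb 0$) gives $g(D_{F_{u_i}}F_{u_j},F_{u_k})=\tfrac12(\partial_{u_i}s_{j,k}+\partial_{u_j}s_{i,k}-\partial_{u_k}s_{i,j})$, whence
\[
 g(D_{\hat\gamma'}\hat\gamma',F_{u_k})
 =\sum_i\ddot\gamma^i s_{i,k}
 +\tfrac12\sum_{i,j}\dot\gamma^i\dot\gamma^j
   \bigl(2\,\partial_{u_j}s_{i,k}-\partial_{u_k}s_{i,j}\bigr).
\]
On the other hand, the radical condition $\sum_i\dot\gamma^i s_{i,k}=0$ along $\gamma$, differentiated in $t$, yields $\sum_i\ddot\gamma^i s_{i,k}=-\sum_{i,j}\dot\gamma^i\dot\gamma^j\,\partial_{u_j}s_{i,k}$. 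Substituting this, the first two groups of terms cancel and we are left with
\[
 g(D_{\hat\gamma'}\hat\gamma',F_{u_k})
 =-\tfrac12\sum_{i,j}\dot\gamma^i\dot\gamma^j\,\partial_{u_k}s_{i,j}
 \qquad(k=1,\dots,n).
\]

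The linchpin is to evaluate the right-hand side using $\nabla B_F=\mb 0$. Along $\gamma$ the symmetric matrix $S_F$ has corank exactly one, with kernel spanned by $(\dot\gamma^1,\dots,\dot\gamma^n)$; since the adjugate of a symmetric corank-one matrix is a nonzero multiple of the outer product of a kernel vector with itself, the cofactor matrix satisfies $\tilde s^{i,j}=\mu\,\dot\gamma^i\dot\gamma^j$ for some nowhere-vanishing function $\mu$ along $\gamma$. Jacobi's formula for the derivative of a determinant then gives
\[
 0=\partial_{u_k}B_F=\sum_{i,j}\tilde s^{i,j}\,\partial_{u_k}s_{i,j}
 =\mu\sum_{i,j}\dot\gamma^i\dot\gamma^j\,\partial_{u_k}s_{i,j},
\]
so that $\sum_{i,j}\dot\gamma^i\dot\gamma^j\,\partial_{u_k}s_{i,j}=0$ for every $k$. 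Combining with the previous display, $g(D_{\hat\gamma'}\hat\gamma',F_{u_k})=0$ for all $k$, which is precisely the orthogonality claimed at the outset; hence $\hat\gamma''$ is proportional to $\hat\gamma'$. For the final assertion, a regular null curve whose acceleration is proportional to its velocity is a pre-geodesic, so a standard reparametrization $t\mapsto s(t)$ turns $\hat\gamma$ into an affinely parametrized geodesic, which is light-like because $\hat\gamma'$ is null. I expect the main obstacle to be the identification $\tilde s^{i,j}=\mu\,\dot\gamma^i\dot\gamma^j$: this rank-one structure of the cofactor matrix is exactly what converts the pointwise hypothesis $\nabla B_F=\mb 0$ into the quadratic form $\sum_{i,j}\dot\gamma^i\dot\gamma^j\,\partial_{u_k}s_{i,j}$ appearing in the normal component of the acceleration. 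Once it is in place, the remaining steps are routine.
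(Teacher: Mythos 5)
Your proof is correct, and it reaches the paper's conclusion by a genuinely different route. Both arguments share the same skeleton---show that $\hat\gamma''$ is $g$-orthogonal to the whole tangent hyperplane $dF(T_{\gamma(t)}U)$, then use that this hyperplane's $g$-orthogonal complement is one-dimensional and spanned by the null direction $\hat\gamma'$---but the implementations diverge. The paper builds special coordinates $(y_1,\dots,y_n)$ adapted to $\gamma$ (Gram--Schmidt on a complement of $\gamma'$, an auxiliary Riemannian metric, geodesic tubular coordinates), in which $S_F=\operatorname{diag}(1,\dots,1,0)$ along $\gamma$ with $\partial_{y_n}=\gamma'$; in that gauge the orthogonality identities come from differentiating $s_{n,j}^{}$, the hypothesis $\nabla B_F=\mb 0$ reads simply $(s_{n,n}^{})_{y_i}=0$, and the conclusion is drawn from an explicit frame decomposition $aE_0+b\,dF(\partial_{y_n})+\tilde{\vect{w}}$. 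You instead work in arbitrary coordinates: the Koszul formula plus the differentiated radical condition give $g(\hat\gamma'',F_{u_k})=-\tfrac12\sum_{i,j}\dot\gamma^i\dot\gamma^j\,\partial_{u_k}s_{i,j}$, and your key lemma---that the adjugate of the corank-one symmetric matrix $S_F$ equals $\mu\,\dot\gamma\dot\gamma^T$ with $\mu\ne 0$---combined with Jacobi's formula converts $\nabla B_F=\mb 0$ into the vanishing of exactly that quadratic expression. Your rank-one adjugate observation is precisely the invariant content that the paper's choice of gauge trivializes (there the adjugate is visibly $e_ne_n^T$), so your version buys coordinate independence and dispenses with the auxiliary Riemannian metric and tubular coordinates, at the price of some extra linear algebra, while the paper's version keeps every step an elementary one-line computation. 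Two small points you should make explicit: the corank of $S_F$ along $\gamma$ is exactly one (hence $\mu\ne 0$) because at a light-like point the radical of the positive semi-definite form $ds^2$ is exactly one-dimensional, and the step ``null implies lying in the radical'' uses the Cauchy--Schwarz inequality for the semi-definite form $ds^2$; both facts are also used, implicitly, in the paper's proof.
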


\begin{proof}
We can take vector fields
$
X_1(t),\dots,X_{n-1}(t)\in T_{\gamma(t)}U
$
along the curve $\gamma(t)$
such that
$$
\gamma'(t)\,\,,X_1(t)\,\,,\dots,\,\,X_{n-1}(t)
$$
forms a basis of $T_{\gamma(t)}U$ at each point $t\in I$.
We let $W_t$ be the subspace of
$T_{\gamma(t)}U$  spanned by $X_1(t),\dots,X_{n-1}(t)$.
Since $g$ is of signature $(n-1,1)$ and
$\hat \gamma'(t)$ points in the null-direction, 
the restriction of the first fundamental form
$ds^2$ to $W_t$ is positive definite.
By the Schmidt-orthogonalization, we can make
vector fields 
$$
E_1(t),\,\,\dots,\,\,E_{n-1}(t)\in T_{\gamma(t)}U
$$
along the curve $\gamma(t)$
that give an orthonormal basis of $W_t$ 
at each $t\in I$.
We now take a Riemannian metric $\langle \,,\, \rangle$
on $U$ such that
$$
\gamma'(t),\,\,E_1(t)\,\,,\dots,\,\,E_{n-1}(t)
$$
gives an orthonormal basis of $T_{\gamma(t)}U$.
We then take a geodesic tubular coordinate 
neighborhood $(y_1,\dots,y_{n})$  along $\gamma(t)$
with respect to the
Riemannian metric $\langle \,,\, \rangle$
such that
\[
 \left(\partial_{y_n}\right)_{\gamma(t)}=\gamma'(t),\quad
 \left(\partial_{y_i}\right)_{\gamma(t)}=E_{i}(t)
\qquad (i=1,\dots,n-1),
\]
where $\partial_{y_j}:=\partial/\partial y_j
\,\,(j=1,\dots,n)$.
Then $\gamma(t)=(0,\dots,0,t)$ parametrizes the $y_n$-axis.
We set
$$
s_{i,j}^{}:=g(dF(\partial_{y_i}),dF(\partial_{y_j}))
(=ds^2(\partial_{y_i},\partial_{y_j}))
\qquad (i,j=1,\dots,n).
$$
Then, by our construction of the 
coordinates  $(y_1,\dots,y_n)$, we have
\begin{equation}\label{eq:dij-add}
s_{i,j}^{}=
\begin{cases}
\delta_{i,j} & \mbox{$i,j=1,\dots,n-1$},\\
0            &  \mbox{otherwise}
\end{cases}
\end{equation}
along $\gamma(t)$.
Since $s_{n,n}^{}=s_{n,j}^{}=0$ along the $y_n$-axis (i.e. along $\gamma(t)$), 
we have
\begin{equation}\label{eq:D0}
 0=(s_{n,n}^{})_{y_n}=
 g(dF(\partial_{y_n}),dF(\partial_{y_n}))_{y_n}
 =2g(D_{\partial_{y_n}}dF(\partial_{y_n}),dF(\partial_{y_n})),
\end{equation}
and
\begin{align}\label{eq:000}
0&=(s_{n,i}^{})_{y_n}=g(dF(\partial_{y_n}),dF(\partial_{y_i}))_{y_n} \\
\nonumber
&=g(D_{\partial_{y_n}}dF(\partial_{y_n}),dF(\partial_{y_i}))
+g(dF(\partial_{y_n}),D_{\partial_{y_n}}dF(\partial_{y_i}))\\
\nonumber
&=g(D_{\partial_{y_n}}dF(\partial_{y_n}),dF(\partial_{y_i}))
+g(dF(\partial_{y_n}),D_{\partial_{y_i}}dF(\partial_{y_n}))\\
\nonumber
&=g(D_{\partial_{y_n}}dF(\partial_{y_n}),dF(\partial_{y_i}))
+\frac{1}{2} g(dF(\partial_{y_n}),dF(\partial_{y_n}))_{y_i}\\
\nonumber
&= g(D_{\partial_{y_n}}dF(\partial_{y_n}),dF(\partial_{y_i}))
+\frac{1}{2}(s_{n,n}^{})_{y_i}
\end{align}
holds along the $y_n$-axis for $i=1,\dots,n-1$.
We now suppose that $\nabla B_F=\mb 0$ 
along the $y_n$-axis.
Then, we have
$$ 
0=B_{y_i}
=\frac{\partial}{\partial y_i}
\det\pmt{
s_{1,1}^{} &\cdots & s_{1,n}^{} \\
\vdots &\ddots & \vdots \\
s_{n,1}^{} &\cdots & s_{n,n}^{} 
} 
=\det\pmt{
s_{1,1}^{} &\cdots & s_{1,n}^{} \\
\vdots &\ddots & \vdots \\
(s_{n,1}^{})_{y_i}^{} &\cdots & (s_{n,n}^{})_{y_i} 
}
=(s_{n,n}^{})_{y_i}^{},
$$
where $B:=B_F$. By this and
\eqref{eq:000}, we have
\begin{equation}\label{eq:Di}
g(D_{\partial_{y_n}}dF(\partial_{y_n}),
dF(\partial_{y_i}))=0
\qquad (i=1,\dots,n-1)
\end{equation}
along the $y_n$-axis.
We newly take a vector 
field $E_0(t)\in T_{\hat \gamma(t)}M$
along the $y_n$-axis such that
$E_0$ is perpendicular to $dF(\partial_{y_j})$ ($j=1,\dots,n-1$)
and 
$$
E_0,dF(\partial_{y_n}),\,\,
dF(\partial_{y_1}),\,\dots,\,\,dF(\partial_{y_{n-1}}),\,\,
$$
are linearly independent at each $t\in I$.
Here
$D_{\partial_{y_n}}dF(\partial_{y_n})$
can be expressed as a linear combination
$$
D_{\partial_{y_n}}dF(\partial_{y_n})
=a(t) E_0+b(t)\,dF(\partial_{y_n})+\tilde{\vect{w}},\qquad
\tilde{\vect{w}}:=\sum_{j=1}^{n-1} c_j(t)
dF(\partial_{y_j}), 
$$
at each $t\in I$, where
$a(t),b(t),c_1(t),\dots,c_{n-1}(t)$ are $C^{r-2}$-functions on $I$.
By \eqref{eq:Di}, 
we have
$$
0=g(D_{\partial_{y_n}}dF(\partial_{y_n}),\tilde{\vect{w}})=
g(\tilde{\vect{w}},\tilde{\vect{w}})
=ds^2(\vect{w},\vect{w}),
$$
where
$
\vect{w}:=\sum_{j=1}^{n-1} c_j
\partial_{y_j}. 
$
Since $ds^2$ is positive definite
on the subspace spanned by $\{\partial_{y_i}\}_{i=1}^{n-1}$,
we can conclude that $\tilde{\vect{w}}=\vect{0}$.
So 
$$
D_{\partial_{y_n}}dF(\partial_{y_n})
=a E_0+b\,dF(\partial_{y_n})
$$
holds along the $y_n$-axis. 
By \eqref{eq:D0}, we have
$$
0=g(D_{\partial_{y_n}}
dF(\partial_{y_n}),dF(\partial_{y_n}))
=a\,g(E_0,dF(\partial_{y_n})).
$$
Since 
$g(dF(\partial_{y_n}),dF(\partial_{y_i}))=0$ for $i=0,\dots,n-1$
along the $y_n$-axis, the non-degeneracy of the metric $g$
yields that $g(E_0,dF(\partial_{y_n}))
\ne 0$ for each $t$. In particular,
$a(t)=0$ for each $t$.
Thus
$
\hat\gamma''(t)\left(=D_{\partial_{y_n}}
dF(\partial_{y_n})\right)
$
is proportional to $\hat\gamma'(t)$.
\end{proof}

As an application of 
Proposition \ref{prop:weak-version},
we reprove Corollary B in the introduction
under the $C^2$-differentiability.
:

\begin{proof}[An alternative proof of Corollary B]
Let $F:U\to M$ be a $C^2$-differentiable light-like 
immersion. As pointed out in the previous proof
of Corollary B at the end of Section 4,
the induced metric on
$U$ by $F$ is of rank $n-1$.
So, we can take a smooth vector field that points
in the 
null direction at each point on $U$.
Then the integral curves of this foliation
consist of null curves.
By Proposition \ref{prop:weak-version},
the image of these curves are light-like geodesics.
\end{proof}

From now on, we consider the case $n=2$:

\begin{proof}[Proof of Corollary~\ref{cor:main}]
 The assertion (b) is just the statement of 
 Theorem \ref{thm:main}  for $n=2$.
 So it is sufficient to show the assertion (a), which immediately 
 follows from the following proposition:
\end{proof}

\begin{proposition}\label{prop:n=2}
Let $M$ be a Lorentzian $3$-manifold, and
$F:U\to M$ a $C^r$-differentiable immersion
of a domain $U\subset\R^2$
 belonging to the class $\tilde {\mc I}^r_L
(M,\hat o)$ for $r\geq 3$.
 Let $\gamma:I\to U$ be a regular
 curve such that $B_F$ vanishes 
and $\nabla B_F\ne (0,0)$ along $\gamma$.
 Then $\gamma$ is null, that is,
 $ds^2(\gamma'(t),\gamma'(t))=0$ holds.
 Moreover, $\hat \gamma''(t)$ is linearly independent of
$\hat \gamma'(t)$ for each $t\in I$, where
$\hat \gamma=f\circ \gamma$
and
$\hat \gamma''(t):=D_{\gamma'(t)}\hat\gamma'(t)$. 
\end{proposition}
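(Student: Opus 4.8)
The plan is to prove the two assertions in turn: first that $\gamma$ is null, and then, using that fact, that $\hat\gamma''$ and $\hat\gamma'$ are linearly independent. Since $\nabla B_F\neq\vect 0$ along $\gamma$, the set $\Sigma_F=\{B_F=0\}$ is a regular curve near each point of $\gamma$ and $\gamma$ parametrizes it; in particular $\gamma'$ is tangent to $\Sigma_F$, i.e.\ $dB_F(\gamma')=0$. I expect the main obstacle to be the first assertion: I must show that the degenerate (null) direction $\ker(ds^2)$ of the induced metric is tangent to $\Sigma_F$. This is \emph{false} for a general field of symmetric matrices (for instance $\op{diag}(u_1,1)$ degenerates along $\{u_1=0\}$ with null direction $\partial_{u_1}$ transverse to it), so the hard part is that the argument must genuinely use that $ds^2$ is induced by an immersion into a Lorentzian manifold.

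Here is how I would carry out the first assertion. At a light-like point $p\in\gamma$ the normal field $\tilde\nu$ of \eqref{eq:nu0} satisfies $g(\tilde\nu,\tilde\nu)=-B_F=0$, so $\tilde\nu$ is null; being $g$-orthogonal to the (light-like) tangent plane $\Pi:=dF(T_pU)$ and spanning the one-dimensional $\Pi^\perp$, it must lie on the null line of $\Pi$ itself, since that null line is contained in $\Pi^\perp$. Thus $\tilde\nu|_p=\lambda\,dF(\xi)$, where $\xi$ spans $\ker(ds^2)$ at $p$. Differentiating $B_F=-g(\tilde\nu,\tilde\nu)$ and using metric compatibility gives $dB_F(\xi)=-2\,g(D_\xi\tilde\nu,\tilde\nu)$, and substituting $\tilde\nu|_p=\lambda\,dF(\xi)$ reduces this to a multiple of $g(D_\xi\tilde\nu,dF(\xi))$. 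Differentiating the identity $g(\tilde\nu,dF(\xi))\equiv 0$ rewrites the latter as $-g(\tilde\nu,D_\xi dF(\xi))$; the tangential part of $D_\xi dF(\xi)$ is annihilated by the normal $\tilde\nu$, while its normal part is a multiple of $\tilde\nu$, which is $g$-null. Hence $g(D_\xi\tilde\nu,dF(\xi))=0$ at $p$, so $dB_F(\xi)=0$ and $\xi$ is tangent to $\Sigma_F$. Since $n=2$, both $\ker(ds^2)$ and $T_p\Sigma_F$ are lines, so $\xi\parallel\gamma'$ and therefore $ds^2(\gamma',\gamma')=0$, i.e.\ $\gamma$ is null.

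For the second assertion I would reuse the coordinate construction from the proof of Proposition~\ref{prop:weak-version}. Now that $\gamma$ is known to be null, I can take coordinates $(y_1,y_2)$ adapted to $\gamma$ with $\gamma$ the $y_2$-axis and $s_{1,1}=1,\ s_{1,2}=0,\ s_{2,2}=0$ along $\gamma$; the derivation of \eqref{eq:000} uses only $s_{2,1}=0$ along $\gamma$ (not $\nabla B_F=\vect0$), so it remains valid and yields $g(\hat\gamma'',dF(\partial_{y_1}))=-\tfrac12(s_{2,2})_{y_1}$. Expanding $B_F=\det S_F$ along $\gamma$, where $S_F=\op{diag}(1,0)$, gives $(s_{2,2})_{y_1}=(B_F)_{y_1}$; and since $(B_F)_{y_2}=dB_F(\gamma')=0$ while $\nabla B_F\neq\vect0$, necessarily $(B_F)_{y_1}\neq 0$. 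Thus $g(\hat\gamma'',dF(\partial_{y_1}))\neq 0$, whereas $g(\hat\gamma',dF(\partial_{y_1}))=s_{1,2}=0$; were $\hat\gamma''$ proportional to $\hat\gamma'$, the former inner product would vanish, a contradiction, so $\hat\gamma''$ and $\hat\gamma'$ are linearly independent. This is exactly the point where the present hypothesis diverges from that of Proposition~\ref{prop:weak-version}: there $\nabla B_F=\vect0$ forces $(s_{2,2})_{y_1}=0$ and hence proportionality, while here its non-vanishing forces independence.
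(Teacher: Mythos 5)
Your handling of the second assertion is fine: granting that $\gamma$ is null, re-running the computation \eqref{eq:000} from the proof of Proposition \ref{prop:weak-version} in coordinates adapted to $\gamma$ and observing that $(s_{2,2})_{y_1}=(B_F)_{y_1}\ne 0$ along $\gamma$ is essentially what the paper does (it cites Proposition \ref{prop:weak-version}, whose proof establishes exactly this equivalence between $\nabla B_F=\mb 0$ and proportionality of $\hat\gamma''$, $\hat\gamma'$). The genuine gap is in your proof of the first assertion, at the step ``the tangential part of $D_\xi dF(\xi)$ is annihilated by the normal $\tilde\nu$, while its normal part is a multiple of $\tilde\nu$, which is $g$-null.'' At a light-like point the splitting of $T_{F(p)}M$ into tangential and normal parts does not exist: $\tilde\nu$ spans $\Pi^\perp\subset\Pi$, so ``tangent plus normal'' fails to exhaust $T_{F(p)}M$, and $D_\xi dF(\xi)$ can have a component transverse to $\Pi$, which is precisely what the pairing with $\tilde\nu$ detects. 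Indeed $g(\tilde\nu,D_\xi dF(\xi))=\tilde h(\xi,\xi)$ is a coefficient of the normalized second fundamental form \eqref{eq:AF}; with the normalization \eqref{eq:c} one has $\tilde s^{i,j}(p)=\delta_{i,n}\delta_{j,n}$ by \eqref{eq:b3}, so $\tilde h(\xi,\xi)=A_F(p)$. What your computation actually proves is $dB_F(\xi)=2\lambda A_F(p)$, which has no reason to vanish.

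Moreover, this cannot be repaired from the stated hypotheses, because the first assertion is false without an admissibility assumption on $F$: take $M=\R^3_1$ and $F(x,y)=\bigl(y+y^2/2,\,x,\,y\bigr)$, so that $B_F=-2y-y^2$ and $A_F\equiv 1$; along $\gamma(t)=(t,0)$ one has $B_F=0$ and $\nabla B_F=(0,-2)\ne(0,0)$, yet $ds^2(\gamma',\gamma')=s_{1,1}=1\ne 0$. What is missing---in your argument and, strictly speaking, in the proposition as stated---is the hypothesis that $F$ belongs to one of the classes $\XX^{r}_{\alpha}(M,\hat o)$: the paper's own proof invokes Lemma \ref{lem:nablaB}, whose hypothesis is $F\in\XX^{3}_{\alpha}(M,\hat o)$, and it is exactly the resulting identity $A_F=\phi\,(B_F)^{1+\alpha}$ (hence $A_F=0$ on $\Sigma_F$) that kills the term $\tilde h(\xi,\xi)$ above, gives $(B_F)_{x_n}=0$, and makes $\gamma$ null; in the application (Corollary \ref{cor:main}) one indeed has $F\in\XX^{4}_{0}(M,\hat o)$. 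So your opening remark was on the right track but understated: it is not enough that $ds^2$ is induced by an immersion (the example above is immersion-induced); the indispensable input is the mean-curvature admissibility condition, and without it no proof of the first assertion exists.
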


\begin{proof}
We set $B:=B_F$ and
$x:=x_1$ and $y:=x_2$ for the sake of simplicity.
We set $\gamma(t)=(x(t),y(t))$. 
Differentiating 
the relation
$B(x(t),y(t))=0$, we have
\begin{equation}\label{eq:Bxy0}
x'(t)B_x(\gamma(t))+y'(0)B_y(\gamma(t))=0.
\end{equation}
Without loss of generality, we may assume that $o=\gamma(t)$.
Moreover, we may assume that $F\in {\mc I}_L^r(M,\hat o)$.
Then we can write $F(x,y)=(f(x,y),x,y)$ and
\begin{equation}
 f_x(\gamma(t))=0,\qquad f_y(\gamma(t))=1.
\end{equation}
By \eqref{eq:Bxy0},
$(x'(t),y'(t))$ is proportional
to $(-B_y(\gamma(t)),B_x(\gamma(t)))(\ne (0,0))$.
By Lemma \ref{lem:nablaB}, we have
$B_{y}(\gamma(t))=0$, and the vector $(x'(t),y'(t))$ 
is proportional $(0,1)=\partial_y$.
Since 
\[
  ds^2_{\gamma(t)}(\partial_y,\partial_y)=s_{2,2}^{}(\gamma(t))=
   1-f_{y}(\gamma(t))^2=0,
\]
the vector $(x'(t),y'(t))$ points in the null direction,
that is, 
$$F_x(\gamma(t)) x'(t)+F_y(\gamma(t))y'(t)
$$ 
gives a light-like vector.
Thus $\gamma(t)$ is a null curve.

We next prove the second assertion.
Since $\nabla B_F\ne (0,0)$ along $\gamma$,
the vector field $\hat \gamma''(t)$ 
is not proportional to $\hat\gamma'(t)$, by
Proposition \ref{prop:weak-version}.
\end{proof}

\section{Proof of Theorem D}

Let $F:U\to M$
be a $C^\omega$-immersion 
with a light-like point $o\in U$
whose mean 
curvature function $H_F$ satisfies the assumption of Theorem D.
Then $F$ belongs to the class $\XX^\omega_{1/2}(M,\hat o)$.
In this case, $\sqrt{|B_F|}$ may not be smooth in general,
and the ordinary differential equation
\eqref{eq:ode0} may not satisfy the 
local Lipschitz condition.
However, we can modify the proof of Theorem A as follows: 
Under the assumption of Theorem D,
there exists $\phi\in C^{\omega}_0(\R^n)$
such that $\phi=n H_F$ holds on $U\setminus \Sigma_F$
(cf. (0.5)) and
\begin{equation}\label{eq:AB1/2}
A_F-\varphi |B_F|^{3/2}=0, \quad |\phi|>0
\end{equation}
hold on $U$. In this case, $F$ never changes type, by 
\cite[Theorem 1.1]{HKKUY}.
\begin{lemma}
We set
$$
\delta(x_n):=\sqrt{|B_{F}|}\biggr|_{(x_1,\dots,x_{n-1})=\vect{0}}.
$$
Then $\delta$ is a real analytic function of $x_n$.
\end{lemma}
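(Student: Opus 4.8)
The plan is to exploit the hypothesis $|\phi|>0$ together with real analyticity. Since $F$ is $C^\omega$, both $A_F$ and $\phi$ are real analytic on $U$, and $\phi$ is nowhere zero; hence by \eqref{eq:AB1/2},
$$
|B_F|^{3/2}=\frac{A_F}{\phi}
$$
is real analytic on all of $U$. Restricting to the $x_n$-axis, I would set $b(x_n):=B_F(0,\dots,0,x_n)$, which is real analytic as a restriction of the real analytic function $B_F$, and observe that $g(x_n):=|b(x_n)|^{3/2}$ is then real analytic in $x_n$. Since $\delta=|b|^{1/2}$, the whole task reduces to promoting the analyticity of $|b|^{3/2}$ to that of $|b|^{1/2}$.

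First I would dispose of the trivial and generic cases. If $b\equiv 0$, then $\delta\equiv 0$ is real analytic. Otherwise $b$ is a nonzero real analytic function of one variable, so its zeros are isolated; on the complement of the zero set $b$ is nonvanishing of fixed sign, so $\delta=|b|^{1/2}$ is manifestly real analytic there. Because $o$ is a light-like point we have $b(0)=0$, so the origin is among the zeros, and the only remaining issue is analyticity of $\delta$ across each zero of $b$.

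The heart of the argument would be a local parity computation at a zero $x_0$ of $b$. By \cite[Theorem 1.1]{HKKUY}, $F$ never changes type, so $B_F$, and hence $b$, has constant sign; thus $b$ vanishes to even order at $x_0$, and I may write $b=(x_n-x_0)^{2q}h_0$ with $h_0$ real analytic and $h_0(x_0)\ne 0$. Then near $x_0$
$$
g=|b|^{3/2}=|x_n-x_0|^{3q}\,|h_0|^{3/2},
$$
where $|h_0|^{3/2}$ is real analytic and nonvanishing, since $h_0(x_0)\ne 0$. As $g$ is real analytic, dividing by $|h_0|^{3/2}$ shows that $|x_n-x_0|^{3q}$ is real analytic, which forces $3q$, and therefore $q$, to be even. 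Consequently
$$
\delta=|b|^{1/2}=|x_n-x_0|^{q}\,|h_0|^{1/2}=(x_n-x_0)^{q}\,|h_0|^{1/2}
$$
is real analytic near $x_0$, the last equality using that $q$ is even and $|h_0|^{1/2}$ is analytic near $x_0$. Combining this with the previous paragraph yields that $\delta$ is real analytic on its entire domain.

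The main obstacle is exactly this passage from the cube-power to the square-power, i.e.\ from $|b|^{3/2}$ to $|b|^{1/2}$: a priori the square root of a sign-definite analytic function need not be analytic (as $\sqrt{x^2}=|x|$ shows), so one genuinely needs a divisibility input. The no-type-change property guarantees the even vanishing order $2q$, while the analyticity of $|B_F|^{3/2}$ established in the first step upgrades this to $q$ even, which is precisely the condition making $|b|^{1/2}$ analytic. I would only need to take care that the two possible global signs of $B_F$ are handled symmetrically and that the isolated-zeros reduction is valid, but neither presents any real difficulty.
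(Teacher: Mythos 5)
Your proof is correct and is essentially the paper's own argument: both use \cite[Theorem 1.1]{HKKUY} to get an even vanishing order $2q$ for $B_F$ along the axis, then use the analyticity of $|B_F|^{3/2}=A_F/\phi$ to force $3q$ (hence $q$) to be even, which makes $|B_F|^{1/2}$ analytic. The only differences are cosmetic: you treat the case $b\equiv 0$ and arbitrary zeros $x_0$ explicitly, whereas the paper, working with germs at $o$, factors $B_F|_{\text{axis}}=c\,(x_n)^{2m}(1+k(x_n))$ and runs the same parity argument at the origin only.
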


\begin{proof}
Since we know that $B_F$ never changes sign,
we can write
\begin{equation}\label{eq:cmcBF}
B_{F}|_{(x_1,\dots,x_{n-1})=\vect{0}}=c (x_n)^{2m}(1+k(x_n)),
\end{equation}
where $c$ is a non-zero constant, $m$ is a positive integer and
$k(x_n)$ is a real analytic function of
one variable.
By \eqref{eq:AB1/2},
we know that $\delta(x_n)^{3}=A_F/\phi$ is 
also real analytic.  
So $m$ is an even number, and we can write
$$
\delta(x_n)=|c|(x_n)^{m}\sqrt{1+k(x_n)},
$$
proving the assertion.
\end{proof}

For the sake of the readers' convenience, 
we first prove Theorem D for $M=\R^{3}_1$
under the assumption that the mean curvature
function $H_F$ is constant:

\begin{proof}[{$($Proof of Theorem D for $M=\R^{3}_1)$}]
Let $F:U\to \R^3_1$ be a real analytic immersion
having mean curvature  $H_F=C/n(\ne 0)$ 
in the form $F(x,y)=(f(x,y),x,y)$ such that $(0,0)$ is a light-like 
point.
Without loss of generality, we may assume (cf.\ \eqref{eq:G2}) that
\[
    f(x,y) = a(y) + b(y)x + h(x,y) x^3,
\]
where $a(y)$, $b(y)$ and $h(x,y)$ are real analytic functions with
\[
   a(0)=0,\qquad a'(0)= 1, \qquad b(0) = 0.
\]
Because of \eqref{eq:cmcBF}, the origin is a degenerate
light-like point.
Hence $b'(0)=0$ holds (cf.\ Lemma~\ref{lem:nablaB}).
We set
$$
\tilde A:=A_F -C B_F |B_F|^{1/2} (= A_F-CB_F\delta).
$$
Then we have
\begin{align}
\label{eq:Hc}
  0(=\tilde A|_{x=0})=&
  C (1-(a')^2-b^2)\delta+h_0 
  \left(1-(a')^2\right)+\left(1-b^2\right) a''+2 b 
  a' b',  \\
\label{eq:Hc2}
  0(=\tilde A_x|_{x=0})=&
  -2 b h_0 a''+3C 
  \delta  \left(b h_0+a' b'\right)\\
\nonumber
  &\phantom{aa}+2 b h_2 a'
-3h_1 \left((a')^2-1\right) -
  \left(b^2-1\right) b''  +2 b (b')^2,
\end{align}
where
 \begin{alignat*}{3}
  h_0(y)&:=h(0,y),
  \quad &
  h_1(y)&:=h_x(0,y),
  \quad &
  h_2(y)&:=h_y(0,y).
\end{alignat*}
Then \eqref{eq:Hc} and \eqref{eq:Hc2}
 can be considered as a system of
ordinary differential equations with
unknown functions $a,b$, 
regarding $\delta,\,\,h_j$ ($j=0,1,2$)
as given functions. Here, the crucial point is 
that we think of $\delta$ as a given function.
Namely, we consider the following 
system of
ordinary differential equations with
unknown functions $u,v$, as follows:
\begin{align}
\label{eq:Hcc}
  0(=\tilde A|_{x=0})=&
  C (1-(u')^2-v^2)\delta+h_0 
  \left(1-(u')^2\right)+\left(1-v^2\right) u''+2 v 
  u' v',  \\
\label{eq:Hcc2}
  0(=\tilde A_x|_{x=0})=&
  -2 v h_0 u''+3C 
  \delta  \left(v h_0+u' v'\right)\\
\nonumber
  &\phantom{aa}+2 v h_2 u'
-3h_1 \left((u')^2-1\right) -
  \left(v^2-1\right) v''  +2 v (v')^2, 
\end{align}
where $\delta:=\sqrt{1-(a')^2-b^2}$. 
The fact that $\delta$ is real analytic
(cf. Lemma 5.1)
yields that we can apply the uniqueness of
the system 
\eqref{eq:Hcc},
\eqref{eq:Hcc2}
of ordinary differential equations 
with initial data $u(0)=0$, $u'(0)=1$ and $v(0)=0$,
$v'(0)=0$.
So we can conclude that
$(u(y),v(y))=(y,0)$.
Since $(u,v)=(a,b)$ is also a solution of
the same system of equations,
we have $(a(y),b(y))=(y,0)$.
\end{proof}

\begin{proof}[{$($Proof of Theorem D for the general case$)$}]
We set $F_0$ as in
\eqref{eq:F1} and
\eqref{eq:F2}.
By Proposition \ref{prop:B}, $F_0$ satisfies 
\begin{equation}\label{eq:5-1}
\tilde A_{F_0}=0, \qquad (\tilde A_{F_0})_{x_i}=0
\qquad (i=1,\dots,n-1)
\end{equation}
along the $x_n$-axis,
where $\tilde A_{F_0}:=A_{F_0}-\phi B_{F_0}\sqrt{|B_{F_0}|}$.

On the other hand,
\begin{equation}\label{eq:5-2}
\tilde A_{F}=0, \qquad (\tilde A_{F})_{x_i}=0
\qquad (i=1,\dots,n-1)
\end{equation}
correspond to
\eqref{eq:AB0} and
\eqref{eq:AB1}, respectively.
Namely,
\begin{align}\label{eq:AB1a}
&A_F-B_F|B_F|^{1/2}\phi=0\\
\label{eq:AB1b}
&(\tilde A_F)_{x_i}=
(A_F)_{x_i}-B_F|B_F|^{1/2}\phi_{x_i}
-\frac32\tilde\delta \phi(B_F)_{x_i}=0
\quad (i=1,\dots,n-1),
\end{align}
where
$$
\tilde\delta(x_1,\dots,x_n):=\sqrt{|B_F(x_1,\dots,x_n)|}.
$$
Then \eqref{eq:AB0} and \eqref{eq:AB1b}
induce the following two equalities, respectively:
\begin{align}\label{eq:fin1}
& A_{F}|_{(x_1,\dots,x_{n-1})=\vect{0}}-\hat \phi \delta
  B_{F}|_{(x_1,\dots,x_{n-1})=\vect{0}}
=0, \\
\label{eq:fin2}
&(A_F)_{x_i}|_{(x_1,\dots,x_{n-1})=\vect{0}}
-\hat \phi_{i}
\delta B_F |_{(x_1,\dots,x_{n-1})=\vect{0}} \\
\nonumber
& \phantom{aaaaaaaaaaaaaaaaaaaaaa}
-\frac32\delta \hat \phi(B_F)_{x_i}|_{(x_1,\dots,x_{n-1})=\vect{0}}=0,
\end{align}
where 
$$
\hat \phi(x_n):=\phi(0,\dots,0,x_n),\quad
\hat \phi_i(x_n):=\phi_{x_i}(0,\dots,0,x_n)
\qquad (i=1,\dots,n-1)
$$
and $\delta(x_n)=\hat\delta(0,\dots,0,x_n)$.
Since the expressions of two functions
of one variable
$$
A_{F}|_{(x_1,\dots,x_{n-1})=\vect{0}},\qquad 
B_{F}|_{(x_1,\dots,x_{n-1})=\vect{0}}
$$
contain $a(x_n),a'(x_n)$ and $b_i(x_n),b'_i(x_n)$
($i=1,\dots,n-1$),
 \eqref{eq:fin1} and \eqref{eq:fin2}
can be considered as  a
system of second order ordinary
differential equations 
with unknown functions
$a(x_n)$ and $b_i(x_n)$
($i=1,\dots,n-1$).
Here, the function $\delta(x_n)$ is considered
as a given function.
By
\eqref{eq:5-1},
$$
a_0(t):=t,\qquad b_{0,i}(t):=0\qquad (i=1,\dots,n-1)
$$
give a solution of this new system of ordinary equations
with the initial condition
\begin{equation}\label{eq:ini1sec}
a(0)=0, \quad a'(0)=1,
\quad b_i(0)=0 \qquad (i=1,\dots,n-1), 
\end{equation}
and
\begin{equation}\label{eq:ini2sec}
b'_i(0)=0 \qquad (i=1,\dots,n-1). 
\end{equation}
The condition \eqref{eq:ini1sec}
implies that $o$ is a light-like point
(cf. \eqref{eq:c}), and the condition \eqref{eq:ini2sec}
implies that $o$ is a degenerate light-like point
(cf. Lemma \ref{lem:nablaB}).
Since $\delta$ is a real analytic function on a neighborhood of
$o$, this system of ordinary differential equation
satisfies the Lipschitz condition.
Hence the uniqueness of the solution implies that 
$a(t)=t$ and $b_i(t)=0$ ($i=1,\dots,n-1$), 
that is, $F$ contains
a light-like geodesic consisting of degenerate light-like points.
\end{proof}

\begin{remark}\label{rem:HF}
In the above proof of Theorem D, we may use 
$\hat H_F:=\op{sign}(B_F)H_F$ instead of $H_F$
(cf.~Remark \ref{rmk:1-2}).
In fact, $B_F$ never changes sign, because of 
\cite[Theorem 1.1]{HKKUY},
and $\hat H_F=H_F$ or $\hat H_F=-H_F$
holds on $U$.
\end{remark}

In the authors' previous works,
several examples of zero mean curvature surfaces
having light-like lines
in $\R^3_1$
were given.
Also several examples having bounded
non-zero constant mean curvature 
with light-like geodesics were given in
\cite{HKKUY}.
As other examples, several space-like 
surfaces with non-zero constant mean curvature $1$
containing light-like lines
in the de Sitter 3-space $S^3_1$
have recently been found in \cite{FKKRUYY}.
Finally, we give here an example with non-constant mean
curvature function satisfying the assumption of
Theorem D:

\begin{example}\label{ex:X3}
We set (see also Examples \ref{ex:X1}, \ref{ex:X2})
\[
  F_3(x,y):=\bigl(f_3(x,y),x,y\bigr),\quad
  f_3(x,y):=y+x^3 + x^4 + yx^5.
\]
Then 
\begin{align*}
  A_{F_3} &= 2x^6(9+8x + 5yx^2 + 12 x^5 + 14x^6+15yx^7),\\
  B_{F_3} &= -x^4 (9+26x+2(8+15y)x^2 + 40yx^3 + 25y^2 x^4+x^6).
\end{align*}
So $F_3\in \XX^\omega_0(\R^3_1)$ and  
satisfies also the assumption of Theorem D.
The mean curvature function $H_{F_3}$ 
is real analytic.
\end{example}

\appendix
\section{Fermi coordinate systems along light-like geodesics}
\label{app:imp}

Let $(M,g)$ be a Lorentzian $(n+1)$-manifold.
In this appendix, we prove the following assertion:

\begin{proposition}\label{prop:A}
Let $I=(a,b)$ be a closed interval,
and let $\sigma:I\to M$ be 
a light-like geodesic.
Then there exists a local diffeomorphism
$(\epsilon>0$ is a constant$)$
\begin{align*}
&\Phi:\biggl\{(x_0,\dots,x_n)\in \R^{n+1}\,;\,
x_0,x_n\in \bigl[\frac{a+\epsilon}{\sqrt{2}},\frac{b-\epsilon}{\sqrt{2}}\bigr],
\\
&\phantom{aaaaaaaaaaaaaaaaaaaaaaaaaa}
\,\, |x_i|<\epsilon\,\, (i=1,\dots,n-1)
\biggr\} \to M
\end{align*}
 
such that 
\begin{enumerate}
\item[{\rm (a1)}] $\Phi(t,0,\dots,t)=\sigma(t)$
holds for $t\in  \left[{a}/{\sqrt{2}},{b}/{\sqrt{2}}\right]
$, 
\item[{\rm (a2)}] regarding $(x_0,\dots,x_n)$
as a local coordinate system by $\Phi$, 
$g_{0,0}^{}=-1$,\,\, $g_{0,i}^{}=0$ along $\sigma$
and $g_{j,k}^{}=\delta_{j,k}$ hold
for $i,j,k=1,\dots,n$ along $\sigma$,
where $g_{j,k}^{}:=g(\partial_{x_j},\partial_{x_k})$, and
\item[{\rm (a3)}] all of the Christoffel symbols 
vanish along $\sigma$. As a consequence,
all derivatives $\partial g_{\alpha,\beta}^{}/\partial x_\gamma$
$(\alpha,\beta,\gamma=0,\dots,n)$ vanish  along $\sigma$.
\end{enumerate}
\end{proposition}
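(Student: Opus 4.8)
The plan is to realize $\Phi$ as a Fermi-type chart adapted to the null splitting of $\sigma'$, rather than to an orthonormal \emph{normal} frame, which is unavailable here because $\sigma'$ is null and its $g$-orthogonal complement contains $\sigma'$ itself. First I would fix a point $\sigma(t_0)$, pick a unit time-like $e_0\in T_{\sigma(t_0)}M$, and (rescaling the affine parameter of $\sigma$, which is the source of the $\sqrt2$ factors in the statement) arrange $g(e_0,\sigma')=-1$. Setting $e_n:=\sigma'-e_0$, the identity $g(\sigma',\sigma')=0$ gives $g(e_n,e_n)=1$ and $g(e_0,e_n)=0$, and by construction $e_0+e_n=\sigma'$; completing with spacelike orthonormal $e_1,\dots,e_{n-1}$ yields an orthonormal basis $e_0,\dots,e_n$ with $g(e_0,e_0)=-1$, $g(e_j,e_k)=\delta_{j,k}$, $g(e_0,e_i)=0$. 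Parallel transport along $\sigma$ preserves these inner products, and since $\sigma$ is a geodesic $\sigma'$ is parallel, so $e_0+e_n=\sigma'$ persists along all of $\sigma$.

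Next I would define $\Phi$ by exponentiating off $\sigma$ in the transverse directions, taking the base parameter $\lambda:=(x_0+x_n)/2$ and the transverse null direction $e_0-e_n$:
\[
  \Phi(x_0,\dots,x_n):=\exp_{\sigma(\lambda)}\!\Big(\tfrac{x_0-x_n}{2}\,(e_0(\lambda)-e_n(\lambda))+\sum_{i=1}^{n-1}x_i\,e_i(\lambda)\Big).
\]
Differentiating at points of $\sigma$ (where the transverse coordinates vanish) gives $\partial_{x_0}|_\sigma=\tfrac12\sigma'+\tfrac12(e_0-e_n)=e_0$, likewise $\partial_{x_n}|_\sigma=e_n$, and $\partial_{x_i}|_\sigma=e_i$, so $d\Phi$ is an isomorphism along $\sigma$ and $\Phi$ is a diffeomorphism on a sufficiently thin tube by the inverse function theorem. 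Property (a1) is immediate on the diagonal $x_0=x_n$, and (a2) follows at once because $\{\partial_{x_\alpha}|_\sigma\}=\{e_\alpha\}$ is orthonormal, giving $g_{0,0}=-1$, $g_{j,k}=\delta_{j,k}$ and $g_{0,i}=0$ along $\sigma$.

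For (a3) I would pass to the constant-coefficient linear coordinates $(\lambda,\mu,x_1,\dots,x_{n-1})$ with $\mu:=(x_0-x_n)/2$, in which $\Phi$ is literally a Fermi chart: the base point runs along $\sigma$ and each ray $s\mapsto\exp_{\sigma(\lambda)}(sV)$ is a geodesic. Two facts then kill every Christoffel symbol along $\sigma$. Since the frame is parallel and $\sigma$ is a geodesic, $\nabla_{\partial_\lambda}\partial_\beta=\tfrac{D}{d\lambda}(\partial_\beta|_\sigma)=0$ along $\sigma$ for every $\beta\in\{\lambda,\mu,1,\dots,n-1\}$, whence $\Gamma^\alpha_{\lambda\beta}=0$. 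Since the transverse rays are geodesics, $\Gamma^\alpha_{\beta\gamma}V^\beta V^\gamma=0$ at $\sigma$ for all transverse $V$, and polarizing together with the symmetry $\Gamma^\alpha_{\beta\gamma}=\Gamma^\alpha_{\gamma\beta}$ of the Levi-Civita connection gives $\Gamma^\alpha_{\beta\gamma}=0$ whenever $\beta,\gamma$ are transverse. As every index pair either contains $\lambda$ or is purely transverse, all $\Gamma^\alpha_{\beta\gamma}$ vanish along $\sigma$; this survives the constant linear change back to $(x_0,\dots,x_n)$ (the second-derivative transition terms vanish), and the vanishing of the $\partial_\gamma g_{\alpha,\beta}$ along $\sigma$ follows from the standard identity expressing these derivatives through the Christoffel symbols.

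The main obstacle is precisely that $e_0-e_n$ is \emph{not} $g$-orthogonal to $\sigma'=e_0+e_n$ (indeed $g(e_0-e_n,e_0+e_n)=-2$), so this is not the textbook Fermi construction along a spacelike or timelike geodesic and one cannot invoke normal coordinates on the orthogonal complement. The point to verify carefully is that the Christoffel-vanishing argument never uses orthogonality of the transverse directions to $\sigma'$: it relies only on the frame being parallel and on radial transverse curves being geodesics, both of which hold here. A secondary bookkeeping issue is matching the affine parameter of $\sigma$ to the diagonal $x_0=x_n$, which is exactly what the $\sqrt2$ factors in the stated domain record.
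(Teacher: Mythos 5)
Your proposal is correct and is essentially the paper's own proof: both construct a Fermi chart by parallel-transporting along $\sigma$ a frame containing the tangent $\sigma'$ and a transverse null direction, exponentiate in the transverse directions, kill the Christoffel symbols by combining the parallel-frame/geodesic argument (for index pairs involving the base direction) with the geodesic-ray polarization argument plus torsion-freeness (for purely transverse pairs), and finish with a constant linear change of coordinates, under which both the Christoffel vanishing and the metric normalization survive. The only difference is bookkeeping: the paper starts from the null pair $E_0=\sigma'$, $E_1$ with $g(E_0,E_1)=-1$ and rotates to pseudo-orthonormal coordinates at the end (hence the $\sqrt{2}$ factors), whereas you start from an orthonormal frame with $\sigma'=e_0+e_n$ and transverse null direction $e_0-e_n$, which is the same construction in a rescaled basis.
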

\begin{proof}
We can take vectors 
$\vect{e}_0,\dots,\vect{e}_n\in T_{\sigma(a)}M$ such that
\begin{enumerate}
\item $\vect{e}_0,\vect{e}_1,\dots,\vect{e}_n$ ($\vect{e}_0:=\sigma'(a)$)
gives a basis of $T_{\sigma(a)} M$,
\item $g(\vect{e}_0,\vect{e}_0)=
g(\vect{e}_1,\vect{e}_1)=0$ 
and $g(\vect{e}_0,\vect{e}_1)=-1$,
\item $g(\vect{e}_0,\vect{e}_i)=g(\vect{e}_1,\vect{e}_i)=0$ 
for $i=2,\dots,n$,
\item $g(\vect{e}_j,\vect{e}_k)=\delta_{j,k}$ for
$j,k=2,\dots,n$. 
\end{enumerate}
We let $E_\alpha(t)$ ($a\le t\le b$,\,\, $\alpha=0,\dots,n$) 
be the parallel vector field along $\sigma(t)$ such that
$$
E_\alpha(a)=\vect{e}_\alpha \qquad (\alpha=0,\dots,n).
$$
Since $\sigma$ is a geodesic, $\sigma'(t)=E_0(t)$
holds for $t\in I$.
We then set
$$
\Phi:\R\times \R^n\ni (t,y_1,\dots,y_n)\mapsto 
\op{Exp}_{\sigma(t)}\left(y_1E_1(t)+\cdots +y_n E_n(t)\right)
\in M,
$$
where $\op{Exp}_p$ is the exponential map
at $p\in M$ with respect to the 
Lorentzian metric $g$.
Then there exists $\epsilon>0$ such that
the restriction of $\Phi$ to the open domain
$$
\left\{(y_0,y_1\dots,y_n)\in \R\times \R^n;\, a<y_0<b,\quad |y_i|<\epsilon 
\,\,\,  (i=1,\dots,n)\right\}
$$
gives a diffeomorphism, where $y_0:=t$.
By definition, we have
$$
E_{\alpha}(t)
=\left(\frac{\partial}{\partial y_{\alpha}}\right)_{\sigma(t)}\qquad 
(\alpha=0,\dots,n),
$$
and
\begin{itemize}
\item $g(\partial_{y_0},\partial_{y_0})=
g(\partial_{y_1},\partial_{y_1})=0$ 
and $g(\partial_{y_0},\partial_{y_1})=-1$,
\item $g(\partial_{y_0},\partial_{y_i})=g(\partial_{y_1},\partial_{y_i})=0$ 
for $i=1,\dots,n$ holds along $\sigma$,
\item $g(\partial_{y_j},\partial_{y_k})=\delta_{j,k}$ for
$j,k=2,\dots,n$ along $\sigma$. 
\end{itemize}
Let $D$ be the Levi-Civita connection of $M$.
Then the Christoffel symbols with respect to 
this local coordinate system $(y_0,\dots,y_n)$
are defined by
$$
D_{\partial_{y_\alpha}}\partial_{y_\beta}=
\sum_{\gamma=0}^n\Gamma_{\alpha,\beta}^\gamma \partial_{y_\gamma}
\qquad (\alpha,\beta=0,\dots,n).
$$
We would like to show that
all of the  Christoffel symbols $\Gamma_{\alpha,\beta}^\gamma$
($\alpha,\beta,\gamma=0,\dots,n$)
with respect to the local coordinates $(y_0,\dots,y_n)$
vanish along the $y_0$-axis (i.e. along the curve $\sigma$). 
We fix $(a_1,\dots,a_n)\in \R^n\setminus\{\mb 0\}$, and
consider a curve 
$
c(t):=(0,a_1t,\dots,a_n t)
$
in $M$.
Then, by our definition of the local coordinate system $(y_0,\dots,y_n)$,
this curve $c(t)$ gives a geodesic on $M$.
So $c(t)=(c_0(t),\dots,c_n(t))$ satisfies
$$
c''_\gamma(t)+\sum_{\alpha,\beta=0}^n
\Gamma_{\alpha,\beta}^\gamma(c(t)) c'_\alpha(t) c'_\beta(t)=0
\qquad (\gamma=0,\dots,n).
$$
Since $c'_0(t)=c''_0(t)=\cdots =c''_n(t)=0$, and
$c'_i(t)=a_i$ for $i=1,\dots,n$, this
reduces to
$$
\sum_{i,j=1}^n
\Gamma_{i,j}^\gamma(c(t))a_i a_j=0
\qquad (\gamma=0,\dots,n).
$$
If we set $a_i=a_j=1$ and other $a_k=0$ for $k\ne i,j$,
then we have
$$
\Gamma_{i,j}^\gamma(c(t))+\Gamma_{j,i}^\gamma(c(t))=0
\qquad (i,j=1,\dots,n,\,\,\gamma=0,\dots,n).
$$
Since the connection is torsion free,
$\Gamma_{\alpha,\beta}^\gamma
=\Gamma_{\beta,\alpha}^\gamma$
holds ($0\le \alpha,\beta,\gamma\le n$), 
and we get
\begin{equation}\label{eq:C1}
\Gamma_{i,j}^\gamma(c(t))=0
\qquad (i,j=1,\dots,n,\,\,\gamma=0,\dots,n).
\end{equation}
On the other hand, since $\sigma$ is a geodesic
and $E_0(t),\dots, E_n(t)$ are parallel vector fields
along $\sigma$, we have
\begin{equation}\label{eq:C2}
\Gamma_{0,\beta}^\gamma(c(t))=0
\qquad (\beta,\gamma=0,\dots,n).
\end{equation}
By \eqref{eq:C1} and \eqref{eq:C2}
together with the property 
$\Gamma_{\alpha,\beta}^\gamma
=\Gamma_{\beta,\alpha}^\gamma$,
we can conclude that
all of the Christoffel symbols along the 
curve $\sigma$ vanish.

We now set
\begin{equation}\label{eq:y-to-x}
x_0:=\frac{y_0+y_1}{\sqrt{2}},\quad x_n:=\frac{y_0-y_1}{\sqrt{2}},
\quad x_i:=y_{i+1} \qquad (i=1,\dots,n-1).
\end{equation}
Then the properties (a1) and (a2) for this new coordinate system
$(x_0,\dots,x_n)$ are obvious.
Since the property that all of the Christoffel symbols
vanish along $\sigma$ is preserved under linear coordinate changes,
(a3) is also obtained.
\end{proof}

\section{Computations in $\R^{n+1}_1$}\label{app:Rn1}

We denote by the dot \lq$\cdot$\rq\ the canonical 
Lorentzian inner product of $\R^{n+1}_1$ with
signature $(-+\cdots+)$.
In this appendix, we compute $B:=B_F$ and $A:=A_F$
with respect to the canonical coordinate system
$(x_0,x_1,\dots,x_n)$ of $\R^{n+1}_1$.
Let $f(x_1,\dots,x_n)$ be a $C^2$-function of
$n$ variables defined on a neighborhood of 
the origin $o\in \R^n$. 
We set
$$
F=(f(x_1,\dots,x_n),x_1,\dots,x_n)
$$
and
$$
\vect{e}_0:=(1,0,\dots,0),\quad
\vect{e}_1:=(0,1,\dots,0),\quad \dots,\quad
\vect{e}_n:=(0,0,\dots,1).
$$
They give a canonical frame in $\R^{n+1}_1$
satisfying $\vect{e}_0\cdot \vect{e}_0=-1$.
Then $(x_1,\dots,x_n)$ gives a local coordinate system
of the domain of $F$, and we have
$$
F_{x_i}=f_{x_i}\vect{e}_0+\vect{e}_i
\qquad (i=1,\dots,n).
$$
We set
$$
S:=(s_{i,j}^{})_{i,j=1,\dots,n},\qquad
s_{i,j}^{}:=F_{x_i}\cdot F_{x_j} 
$$
for $i,j=1,\dots,n$,
where the dot \lq $\cdot$ \rq\ is 
the canonical inner product of
$\R^{n+1}_1$. Then we have
\begin{equation}
\label{eq:b1}
s_{i,j}^{}=\delta_{i,j}-f_{x_i}f_{x_j}\qquad \qquad (i,j=1,\dots,n),
\end{equation}
and
$$
S=I_n-(\nabla f)^T(\nabla f),
\qquad  \nabla f:=(f_{x_1},\dots,f_{x_n})
$$
hold, where $I_n$ is the identity matrix
and $(\nabla f)^T$ is the transpose of $(\nabla f)$.
Then we have
$$
B=\op{det}(S)=(1-\lambda_1)\cdots (1-\lambda_n),
$$
where $\lambda_1,\dots,\lambda_n$ are
eigenvalues of the matrix $(\nabla f)^T(\nabla f)$.
Since $(\nabla f)^T(\nabla f)$ is of rank $1$,
we may assume that $\lambda_2=\cdots=\lambda_n=0$.
Then we have
\begin{equation}
\label{eq:b2}
B=1-\lambda_1=1-\op{trace}((\nabla f)^T(\nabla f))
=1-(f_{x_1})^2-\cdots -(f_{x_n})^2.
\end{equation}
Using this, it can be easily checked that
the inverse matrix of $S$ is given by
$$
S^{-1}=I_n+\frac{1}{B}(\nabla f)^T(\nabla f).
$$
In particular, the cofactor matrix 
$\tilde S=(\tilde s^{i,j})_{i,j=1,\dots,n}$
of $S$ satisfies
\begin{equation}
\label{eq:b30}
\tilde S=B I_n+(\nabla f)^T(\nabla f),
\end{equation}
that is
\begin{equation}
\label{eq:b3}
\tilde s^{i,j}=B\delta_{i,j}+f_{x_i}f_{x_j}\qquad \qquad (i,j=1,\dots,n),
\end{equation}
where $\delta_{i,j}$ denotes Kronecker's delta.

On the other hand,
\begin{equation}
\label{eq:b4}
\tilde \nu=-(1,f_{x_1},\dots,f_{x_n})
\end{equation}
gives a normal vector field 
defined by \eqref{eq:nu0}.
Then the coefficients $\tilde h_{i,j}$
of the normalized second fundamental
form given in 
\eqref{eq:AF}
are written as
\begin{equation}
\label{eq:b5}
\tilde h_{i,j}=F_{x_i,x_j}\cdot \tilde \nu=f_{x_i,x_j}.
\end{equation}
Thus the matrix $\tilde h:=(\tilde h_{i,j})_{i,j=1,\dots,n}$ is just the
Hessian matrix of $f$.
By using the identity \eqref{eq:b2}, the function $A$ 
given in \eqref{eq:AF} can be computed as follows:
\begin{align*}
A&=\op{trace}(\tilde S \tilde h)=
\sum_{i,j=1}^n 
(B\delta_{i,j}+f_{x_i}f_{x_j})f_{x_i,x_j}\\
&=B \sum_{i=1}^n
\left(f_{x_i,x_i}
-\frac12 B_{x_i}f_{x_i}\right)
=B \triangle f -\frac 12 \nabla B\star \nabla f,
\end{align*} 
where \lq$\star$\rq\ is the canonical inner 
product of $\R^n$.

\begin{acknowledgement}
The authors thank Toshizumi Fukui
for fruitful discussions and the referees
for valuable comments.
\end{acknowledgement}

\end{document}